\documentclass[11pt,a4paper]{amsart}

%
\usepackage{graphicx}
\usepackage{amsmath}
%
\usepackage{algorithmicx}
\usepackage{algpseudocode}
\usepackage{algorithm}

\usepackage{pgfplots,tikz}
\usetikzlibrary{shapes,arrows,calc}
\usepackage[centering]{geometry}
\usepackage{etoolbox}

\usepackage{multirow}
\usepackage{diagbox}
\usepackage{scalerel,amssymb}
\usepackage{todonotes}
\usepackage[section]{placeins}
\usepackage{listings}
\usepackage{cleveref}
\usepackage{enumerate}
\theoremstyle{plain}
\newtheorem{theorem}{Theorem}[section]
\newtheorem{corollary}[theorem]{Corollary}   
\newtheorem{lemma}[theorem]{Lemma}           
\newtheorem{proposition}[theorem]{Proposition} 

\theoremstyle{definition}
\newtheorem{definition}[theorem]{Definition}

\newtheorem{remark}[theorem]{Remark}

\newcommand{\im}{\operatorname{im}}
\renewcommand{\Re}{\operatorname{Re}}

\newcommand{\rank}{\operatorname{rank}}
\newcommand{\<}{\langle}
\renewcommand{\>}{\rangle}

\newcommand{\dist}{\operatorname{dist}}

%
%

\renewcommand{\emptyset}{\varnothing}

\newcommand{\braces}[1]{{\rm (}#1{\rm )}}
\newcommand{\rmref}[1]{{\rm\ref{#1}}}

\newcommand{\R}{\ensuremath{\mathbb R}}    
\newcommand{\C}{\ensuremath{\mathbb C}}    
\newcommand{\N}{\ensuremath{\mathbb N}}    

\newcommand{\gperp}{{[\perp]}}

\newcommand{\product}{[\cdot\,,\cdot]}




\newcommand{\calC}{\mathcal C}         
         
\newcommand{\calE}{\mathcal E}

\newcommand{\calI}{\mathcal I}

\newcommand{\calL}{\mathcal L}

\newcommand{\calR}{\mathcal R}

\newcommand{\calU}{\mathcal U}         
\newcommand{\calV}{\mathcal V}         
         
\newcommand{\calX}{\mathcal X}


\newcommand{\la}{\lambda}
\newcommand{\veps}{\varepsilon}

\newcommand{\vphi}{\varphi}

\newcommand{\mat}[4]
{
	\begin{pmatrix}
		#1 & #2\\
		#3 & #4
	\end{pmatrix}
}

\newcommand{\smallvek}[2]{\left(\begin{smallmatrix}#1\\#2\end{smallmatrix}\right)}
\newcommand{\smat}[4]{\left(\begin{smallmatrix}#1 & #2\\#3 & #4\end{smallmatrix}\right)}


\renewcommand{\Re}{\operatorname{Re}}
\newcommand{\linspan}{\operatorname{span}}


\newcommand{\Lra}{\Longrightarrow}
\newcommand{\Sra}{\Rightarrow}

\newcommand{\wto}{\rightharpoonup}

\newcommand{\ol}{\overline}

\newcommand{\wt}{\widetilde}

\newcommand{\diag}{\operatorname{diag}}
\newcommand{\topp}{\top}
\newcommand{\UU}{\mathbb U}
\newcommand{\VV}{\mathbb V}
\newcommand{\inte}{\operatorname{int}}

\makeatletter
\DeclareRobustCommand{\rvec}[1]{{\mathpalette\rvec@{#1}}}
\newcommand{\rvec@}[2]{%
	\vbox{\offinterlineskip
		\ialign{%
			\hfil##\hfil\cr
			$\m@th#1{}_{\rightharpoonup}$\kern-\scriptspace\cr
			$\m@th#1#2$\cr
		}%
	}%
}
\DeclareRobustCommand{\lvec}[1]{{\mathpalette\lvec@{#1}}}
\newcommand{\lvec@}[2]{%
	\vbox{\offinterlineskip
		\ialign{%
			\hfil##\hfil\cr
			$\m@th#1{}_{\leftharpoonup}$\kern-\scriptspace\cr
			$\m@th#1#2$\cr
		}%
	}%
}
\makeatother

\newcommand{\RF}{\calR}
\newcommand{\RT}{\calC}
\newcommand{\RTV}{\RT_t{}^{\!\!\VV}}
\newcommand{\RFV}{\RF_t{}^{\!\!\VV}}

\begin{document}

\title[Optimal control of port-Hamiltonian descriptor systems]{Optimal control of port-Hamiltonian descriptor systems with minimal energy supply}
\author[T.\ Faulwasser, B.\ Maschke, F.\ Philipp, M.\ Schaller and K.\ Worthmann]{Timm Faulwasser$^{1}$, Bernhard Maschke$^{2}$, Friedrich Philipp$^{3}$, Manuel Schaller$^{3}$ and Karl Worthmann$^{3}$}
	\thanks{}
	\thanks{$^{1}$TU Dortmund University, Institute of Energy Systems, Energy Efficiency and Energy Economics, Germany
		{\tt\small timm.faulwasser@ieee.org}}%
	\thanks{$^{2}$Univ Lyon, Universit{\'e} Claude Bernard Lyon 1, CNRS, LAGEPP UMR 
		5007, France {\tt\small bernhard.maschke@univ-lyon1.fr}}
	\thanks{$^{3}$Technische Universit\"at Ilmemau, Institute for Mathematics, Germany
		{\tt\small \{friedrich.philipp, manuel.schaller, karl.worthmann\}@tu-ilmenau.de}. %
		F.\ Philipp was funded by the Carl Zeiss Foundation within the project \textit{DeepTurb---Deep Learning in und von Turbulenz}. 
			M.\ Schaller was funded by the DFG (project numbers 289034702 and 430154635). %
		K.\ Worthmann gratefully acknowledges funding by the German Research Foundation (DFG; grant WO\ 2056/6-1, project number 406141926). B.~Maschke thanks the Institute of Mathematics of TU~Ilmenau for the invitation in October 2020 which has led to this paper.}
	\maketitle

\begin{abstract}
	We consider the singular optimal control problem of minimizing the energy supply of linear dissipative port-Hamiltonian descriptor systems subject to control and terminal state constraints. To this end, after reducing the problem to an ODE with feed-through term, we derive an input-state turnpike towards a subspace for optimal control of generalized port-Hamiltonian ordinary differential equations. We study the reachability properties of the system and prove that optimal states exhibit a turnpike behavior with respect to the conservative subspace. By means of the port-Hamiltonian structure, we show that, despite control constraints, this turnpike property is global in the initial state. Further, we characterize the class of dissipative Hamiltonian matrices and pencils.
\end{abstract}

\section{Introduction}
Many control systems in application domains as diverse as electrical engineering, mechanics, or thermodynamics can be written as port-Hamilto\-nian systems \cite{Brogliato07,Duindam2009,Jacob2012,van2014port}. In recent years, an implicit description of the energy properties \cite{vdSchaft2018,Schaft2020_DiracLagrangeNonlinear} has led to the following class of linear descriptor systems \cite{beattie2018linear,Mehl2018}
\begin{subequations}\label{eq:phDAE}
	\begin{align}
		\frac{\text{d}}{\text{d}t}{Ex}&= (J-R)Qx + Bu\label{e:phDAE_dyn}, \qquad Ex(0) = w^0\in\im E,\\
		y &= B^\top Qx,\label{e:phDAE_output}
	\end{align}
\end{subequations}
where the skew-symmetric {\em structure matrix} $J$ describes the interconnection structure of the systems, along which the energy flows are exchanged between its parts and preserving the total energy, whereas the symmetric positive semi-definite {\em dissipation matrix} $R$ specifies how the system dissipates energy. The matrix $E$ allows an implicit definition of the energy and when it is singular models algebraic constraints arising from symmetries of the energy. Its product $E^\top Q = Q^\top E\ge 0$ with the matrix $Q$ corresponds to the energy stored in the system. 

For models of physical systems, $\calE(u) = \int_0^Tu(t)^\top y(t)\,\text{d}t$ specifies the {\em energy supplied} to the system over a given time horizon $[0,T]$. In the case of electrical circuits, for instance, the pair of input $u$ and output $y$ variables are equal to the pair of voltage and current at a port of the circuit.

For an overview of further physical examples of $u$ and $y$ for pH systems see \cite[Table B.1, p.~205]{van2014port}.
Besides the supply rate $u^\top y$, also the energy Hamiltonian $H(x)\doteq  \tfrac12 x^\top E^\top Qx$ is key in the analysis of port-Hamiltonian systems as the combination of  both gives the energy balance 
\begin{equation}\label{eq:EnergyBalance}
	H(x(T))-H(x(0)) = \int_0^T u(t)^\top y(t)\,\text{d}t - \int_0^T \|R^{\frac12}Qx(t)\|^2\,\text{d}t.
\end{equation}
From the above equality it can be immediately seen that the dynamics are dissipative with respect to the supply rate $u^\top y$---in the sense of Willems \cite{Willems1972a}---and hence they are passive. Thus stabilization and control  can be approached by passivity-based techniques, cf.\ \cite{Ortega08,van2000l2}.

Despite the widespread interest of pH system for modeling, simulation and control of dynamic systems, surprisingly little has been done in terms of optimal control. In \cite{Schaller2020a} we have shown in the ODE case, i.e., $E=I$ in \eqref{eq:phDAE}, that the inherent dissipativity can be exploited in Optimal Control Problems (OCPs) which require to transfer the system state from a prescribed initial condition at $t=0$ to some target set at $t=T$, while minimizing the supplied energy $\int_0^Tu(t)^\top y(t)\,\text{d}t$. Observe that this bilinear objective implies that the considered OCP is singular, i.e., the Hessian of the OCP Hamiltonian with respect to $u$ is $0$. Hence the analysis of existence of solutions is more difficult and Riccati theory cannot be applied directly. 

In spite of this unfavorable situation, we have shown in \cite{Schaller2020a} that in the special ODE-case with $E = I$ and $Q>0$ the OCP is strictly dissipative at least with respect to a subspace and that optimal controls are completely characterized by the state and its adjoint.\footnote{Specifically, we have proven a sufficient condition for any singular arc to be of order $2$.} First steps towards the infinite-dimensional case are taken in \cite{Philipp2021}. In the present paper, it is our main objective to study the OCP in the more general situation $\det E =0$  with regard to reachability properties of the dynamics, dissipativity, and turnpike behavior, cf.\ Figure~\ref{fig:sb}. Classically, the latter means that for varying initial conditions and horizons the optimal solutions reside close to the optimal steady state for the majority of the time. We refer to \cite{Faulwasser2021} for comments on the historical origins and for a recent overview of turnpike results.

The turnpike phenomenon is usually analyzed in two different situations:
\begin{enumerate}
	\item[(a)] supposing that the OCP is regular allows transcribing the first-order optimality conditions as a system ODEs, cf.\ \cite{Gruene2019, Heiland2020, Pighin2020a, Trelat2015};
	\item[(b)] supposing the underlying system, respectively, the OCP as such is strictly dissipative with respect to a specific steady state, cf.\ \cite{Damm2014, epfl:faulwasser15h, Gruene2016a, Trelat18}.
\end{enumerate}
The present paper refers to neither of these situations. While our approach is strongly based on dissipativity, we will, however, not have to assume this property of the OCP. In contrast, a specific subspace dissipativity   property is inherent to any port-Hamiltonian system.
Indeed our main result  establishes a generalized turnpike behavior of the optimal solutions towards the conservative subspace induced by the nullspace of the dissipation matrix $RQ$, see Theorem \ref{thm:DAE_tp}.

Our approach also has a strong relation to non-linear mechanical systems, where the dynamics and symmetries give rise to a manifold of optimal input-state pairs, cf.\ \cite{Faulwasser2021b, Faulwasser2020a}.  

As is well-known, a linear differential algebraic equation is closely related to its corresponding matrix pencil. In \cite{Mehl2020} the pencils associated to DAEs of the form \eqref{eq:phDAE} are called ``dissipative Hamiltonian''. Here, we adopt this notion and characterize this class of pencils (see Theorem \ref{t:ph_pencil_charac}). We also characterize the regularity of such pencils (Proposition \ref{p:regular}) and when their index is at most one (Corollary~\ref{c:ind1}). We regard these results as interesting contributions in their own right. Yet, since they are somewhat tangential to the main  goal of characterizing the optimal solutions for varying initial conditions and horizon, we outsource them to Appendix \ref{a:matpencils}.

\begin{figure}[!h]
	\centering
	\includegraphics[width=\linewidth]{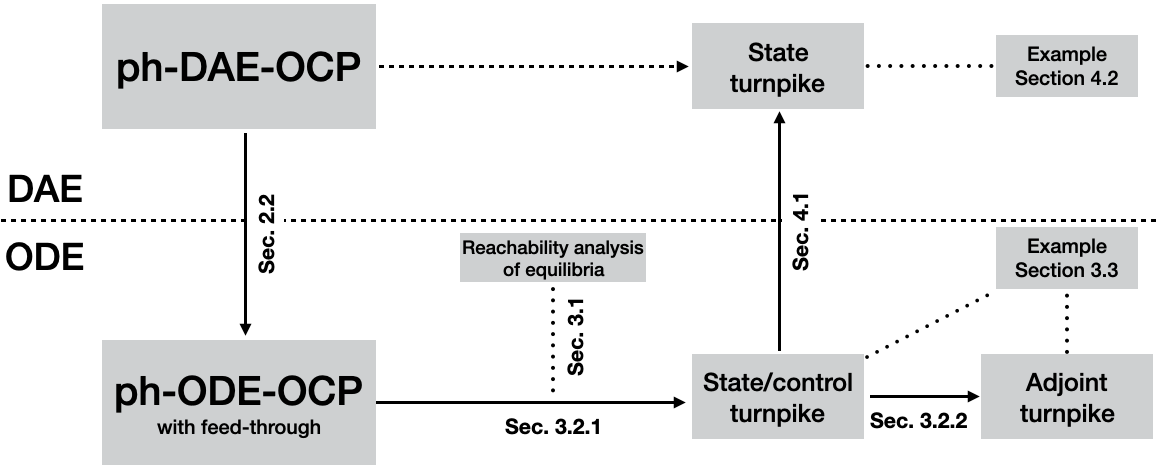}
	\caption{Summary and outline of the main results in this work.}
	\label{fig:sb}
\end{figure} 
The paper is arranged as follows.  Preliminaries and the problem statement are given in Section \ref{s:DAE1}. Therein we introduce the considered OCP for port-Hamiltonian descriptor systems and we show how the algebraic constraints in the DAE can be eliminated via a structure-preserving state transformation proposed in \cite{beattie2018linear}. Having thus reduced the DAE system to a port-Hamiltonian ODE system with feed-through term, Section \ref{s:ODE} studies the corresponding ODE-constrained OCP. In Subsection \ref{ss:ODE_tp} we prove our main result for the ODE case concerning subspace turnpikes. In addition, we prove that the adjoint state performs a turnpike towards zero. The section ends with a numerical example of a modified mass-spring damper system. In Section \ref{s:DAE2} we transfer the statements from Section \ref{s:ODE} to the original DAE case and illustrate  our results by means of a numerical example from robotics. We summarize the paper in Section~\ref{s:conclusion} and give an outlook considering future work.

The paper is supplemented with a comparatively extensive appendix containing a detailed treatise of regular dissipative Hamiltonian pencils, including the above-mentioned characterization.

\
\\
\textbf{Notation. } In the sequel, $\|\cdot\|$ always denotes the Euclidean norm on $\R^n$. The notion $L^1(0,T;\UU)$ stands for the set of all (equivalence classes of) integrable functions on the interval $[0,T]$ with values in $\UU\subset\R^m$. The set of eigenvalues (spectrum) of a real square matrix $A$ is denoted by $\sigma(A)$. The solution of an initial value problem $\dot x = Ax + Bu$, $x(0) = x^0$, with given control $u$ is denoted by $x(\,\cdot\,;x^0,u)$.

\section{Preliminaries}\label{s:DAE1}
In this paper we consider port-Hamiltonian descriptor systems of the form \eqref{eq:phDAE}, where $J,R,Q,E\in\R^{n\times n}$, $B\in\R^{n\times m}$, and
\begin{equation}\label{e:matrix_conditions}
	J=-J^\top,\quad R=R^\top\geq 0,\quad Q^\top E = E^\top Q\geq 0,
\end{equation}
We say that the pair $(u,x)\in L^1_{\rm loc}([0,\infty),\R^m)\times L^1_{\rm loc}([0,\infty),\R^n)$ satisfies \eqref{e:phDAE_dyn}, if $Ex\in W^{1,1}_{\rm loc}([0,\infty),\R^n)$ and \eqref{e:phDAE_dyn} holds almost everywhere on $(0,\infty)$. In this case, we say that $x$ is a solution of \eqref{e:phDAE_dyn}. 

We assume throughout the paper that the DAE \eqref{e:phDAE_dyn} is regular and has differentiation index one (for the definition of both notions see Appendix \ref{a:dae_solutions}), since this guarantees unique solvability of \eqref{e:phDAE_dyn} (cf.\ Proposition~\ref{p:vks} or \cite[Proposition 1]{Ilchmann2019}). Whereas it can be shown that pH-DAEs have a maximal index of two (see Theorem \ref{t:ph_pencil_charac} or \cite{Mehl2018}), many pH-DAEs appear to be of index one, see, e.g., \cite{vanderSchaft2013}. The analysis of OCPs constrained by index two DAEs is subject to future research and can be approached via the index reduction described in \cite[Section 7]{beattie2018linear}.

\begin{remark}[Dissipative Hamiltonian pencils]
	It is well-known that DAEs of the form $\frac{\text{d}}{\text{d}t}Ex = Ax + b$ with $E,A\in\R^{n\times n}$ are closely related to the matrix pencil $P(s) = sE-A$. In Appendix \ref{a:diss-ham-pencil} we coin pencils of the form $P(s) = sE - (J-R)Q$ with matrices $J,R,Q,E\in\R^{n\times n}$ satisfying the conditions~\eqref{e:matrix_conditions} {\em dissipative Hamiltonian} and characterize the regular ones among the class of all regular pencils $P(s) = sE-A$ (see Theorem \ref{t:ph_pencil_charac}). We also characterize regularity of these pencils and the property of having index at most one (see Proposition \ref{p:regular} and Corollary \ref{c:ind1}).
\end{remark}

\subsection{Problem statement}
As already indicated in the Introduction, the power supplied (or withdrawn) from the port-Hamiltonian system \eqref{eq:phDAE} via the ports at time~$t$ is represented by
$
u(t)^\top y(t).
$
Hence, the energy supplied over a time horizon $[0,T]$ is given by
$
\int_0^T u(t)^\top y(t)\,\text{d}t.
$
It is therefore natural to consider the following problem: {\em Given an initial datum $w^0\in\im E$, a target set $\Psi\subset\im E$, and a control set $\mathbb{U}\subset\R^m$, what is the minimal energy supply for steering $w^0$ into $\Psi$?} 

Hence, the considered OCP reads: 
\begin{align}\label{e:phDAE_OCP}
	\begin{split}
		\min_{u\in L^1(0,T;\mathbb{U})} &\int_0^T u(t)^\top y(t)\,\text{d}t\\
		\text{s.t. }\tfrac{\text{d}}{\text{d}t}Ex(t) &= (J-R)Qx(t)+Bu(t),\,\\
		y(t)&=B^\top Qx(t)\\
		Ex(0)&=w^0\in\im E,\, Ex(T)\in\Psi\subset\im E
	\end{split}
\end{align}
We shall assume throughout that the target set $\Psi\subset\im E$ is closed and that the set $\UU\subset\R^m$ of control constraints is convex and compact and contains the origin in its interior $\inte\UU$.
The {\em energy Hamiltonian} of the pH-DAE system \eqref{eq:phDAE} is given by
${H}(x) \doteq   \tfrac12\cdot x^\top E^\top Q x$.
\begin{lemma}\label{l:hamiltonian}
	If $(u,x,y)$ satisfies \eqref{eq:phDAE}, then $H\circ x\in W^{1,1}_{\rm loc}([0,\infty))$ and
	\begin{align}
		\label{eq:energybalance}
		\tfrac{\mathrm{d}}{\mathrm{d}t} H(x(t)) = u(t)^\top y(t) - \|R^{\frac12}Q x(t)\|^2.
	\end{align}
\end{lemma}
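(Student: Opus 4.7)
The plan is to exploit the symmetry $E^\top Q = Q^\top E$ to show that $H$ factors through the map $x\mapsto Ex$, and then to differentiate via the chain rule applied to the more regular object $Ex\in W^{1,1}_{\rm loc}$. Setting $M := E^\top Q = Q^\top E$, which is symmetric and positive semi-definite, I would first observe that $H(x) = \tfrac12 x^\top M x$ depends only on $Ex$: using the identity $a^\top Ma - b^\top M b = (a-b)^\top M(a+b)$ valid for symmetric $M$, one computes
\[
2\bigl(H(x) - H(x')\bigr) = (E(x-x'))^\top Q(x+x'),
\]
which vanishes whenever $Ex = Ex'$. Consequently there is a well-defined quadratic form $\tilde H : \im E \to \R$ such that $H = \tilde H \circ E$, and $\tilde H$ extends smoothly to some $\bar H \in C^\infty(\R^n)$ (e.g., $\bar H(w) := \tilde H(P_{\im E} w)$).

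Since $Ex\in W^{1,1}_{\rm loc}([0,\infty);\R^n)$ is locally bounded and $\bar H$ is smooth, the standard chain rule will yield $H \circ x = \bar H \circ (Ex) \in W^{1,1}_{\rm loc}([0,\infty))$ together with
\[
\tfrac{d}{dt} H(x(t)) = \nabla \bar H(Ex(t))^\top \tfrac{d}{dt} Ex(t) \quad \text{for a.e.\ } t.
\]
Although $\nabla \bar H$ depends on the chosen extension, its pairing with $\tfrac{d}{dt}Ex(t) \in \im E$ does not. A short computation of the directional derivative of $\tilde H$ in the direction $\delta = E\delta x \in \im E$ gives
\[
D\tilde H(Ex)[\delta] = x^\top M\, \delta x = (Qx)^\top E\delta x = (Qx)^\top \delta,
\]
independently of the chosen representatives, so that $Qx(t)$ may replace $\nabla \bar H(Ex(t))$ in the pairing. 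Inserting the DAE $\tfrac{d}{dt}Ex = (J-R)Qx + Bu$ and using $J^\top = -J$ together with $y = B^\top Qx$ then turns the right-hand side into
\[
(Qx)^\top(J-R)Qx + (Qx)^\top B u = -\|R^{1/2}Qx\|^2 + u^\top y,
\]
which is exactly \eqref{eq:energybalance}.

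The main obstacle lies in the regularity mismatch between $x \in L^1_{\rm loc}$, which is not weakly differentiable, and $Ex \in W^{1,1}_{\rm loc}$, which is; hence the naive formal chain rule $\frac{d}{dt}(x^\top M x) = 2 x^\top M \dot x$ is not directly available. The factoring of $H$ through $E$ enabled by the symmetry of $E^\top Q$ is exactly what bridges this gap. An alternative, more computational route would be to mollify $(u,x)$ by a standard kernel $\rho_\eps$, carry out the formal computation for the smooth approximants, and pass to the limit; this would however require more delicate convergence analysis for the quadratic right-hand side.
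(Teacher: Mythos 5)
Your proof is correct and follows essentially the same route as the paper: both exploit the symmetry $E^\top Q=Q^\top E$ to show that $H$ depends only on a $W^{1,1}_{\rm loc}$ quantity, then apply the chain rule, insert the DAE, and use skew-symmetry of $J$. The only cosmetic difference is that you factor $H$ through $Ex$ (handling the extension of $\tilde H$ off $\im E$), whereas the paper factors through $Px=E^\dagger Ex$ with $P$ the orthogonal projection onto $\im E^\top$, writing $H(z)=\tfrac12 (Pz)^\top Q^\top E(Pz)$ — the same idea in a slightly different packaging.
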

\begin{proof}
	Let $P$ denote the orthogonal projection onto $\im E^\top$. Since $I-P$ maps onto $(\im E^\topp)^\perp = \ker E$, we have $E = EP + E(I-P) = EP$. Let $E^\dagger$ denote the Moore-Penrose inverse of $E$. Then $P = E^\dagger E$ and therefore
	$Ex\in W^{1,1}_{\rm loc}([0,\infty),\R^n)$ if and only if 
	$Px\in W^{1,1}_{\rm loc}([0,\infty),\R^n)$.
	Since $H(z) = \frac 12 z^\topp PE^\topp Qz = \frac 12 (Pz)^\topp Q^\topp E(Pz)$, it follows that $H\circ x\in W^{1,1}_{\rm loc}([0,\infty))$ with generalized derivative
	\begin{align*}
		\frac{\text{d}}{\text{d}t}(H\circ x)
		&= x^\topp Q^\topp E\frac{\text{d}}{\text{d}t}Px = x^\topp Q^\topp \frac{\text{d}}{\text{d}t}Ex = y^\topp u - x^\topp Q^\topp RQx,
	\end{align*}
	as $J$ is skew-symmetric and $y = B^\topp Qx$.
\end{proof}

If $(u,x,y)$ satisfies the port-Hamiltonian DAE-system \eqref{eq:phDAE}, then Lemma \ref{l:hamiltonian} immediately implies the following energy balance \eqref{eq:EnergyBalance} which we rewrite as
\begin{align}\label{e:DAE_diss}
	\int_0^T u(t)^\top y(t)\,\text{d}t = H(x(T))-H(x^0) + \int_0^T \|R^{\frac12}Qx(t)\|^2\,\text{d}t.
\end{align}
Put differently, the minimization of the supplied energy is equivalent to minimizing the sum of the overall energy $H(x(T))$ at time $T$ and the internal dissipation given by the last term in \eqref{e:DAE_diss}.

\subsection{Decomposition into differential and algebraic part}
The starting point of our analysis was the OCP \eqref{e:phDAE_OCP} with DAE-con\-straints. In this part, we make use of the structure preserving index reduction from \cite{beattie2018linear} to reduce it to an pH-ODE-constrained OCP. However, the price to pay is a feed-through term in the output equation.
	
	\begin{proposition}\label{p:beattie}
		Suppose that the dynamics of the pH-DAE-constrained OCP pH-DAE \eqref{e:phDAE_OCP} are regular and have index one. Then there exist invertible matrices $U,V\in\mathbb{R}^{n\times n}$ with $U^\top\im E = \R^{n_1}\times \{0\}\subset\R^n$, $n_1\le n$, such that the pH-DAE-OCP~\eqref{e:phDAE_OCP} is equivalent to the pH-ODE-OCP with state $z_1\in\R^{n_1}$, initial datum $z_1^0 \doteq U^\top w^0$ and terminal set $\Phi_1\doteq U^\top\Psi$
		\begin{align}\label{e:phDAE_OCPODE}
			\begin{split}
				\min_{u\in L^1(0,T;\mathbb{U})} &\int_0^T u(t)^\top y(t)\,\text{d}t\\
				\text{s.t. }\dot z_1 &= (J_{11}-R_{11})Q_{11}z_1 +(\hat{B}-\hat{P})u,\quad z_1(0)=z_1^0,\quad z_1(T)\in\Phi_1,\\
				y &= \left(\hat{B}+\hat{P}\right)^\top Q_{11}z_1 + \left(\hat{S}+\hat{N}\right)u,
			\end{split}
		\end{align}
		and 
		\begin{equation}\label{e:z2}
			z_2 = -Q^{-1}_{22}L_{22}^{-1}\big(L_{21}Q_{11}z_1 + B_2u\big)
		\end{equation}
		and corresponding Hamiltonian $\hat{H}(z_1)\doteq  \frac 12\cdot z_1^\top Q_{11}z_1$ satisfying $\hat H(z_1) = H(x)$ and the energy balance
		\begin{align}
			\label{eq:costfunceq_reform}
			\frac{\text{d}}{\text{d}t} \hat{H}(z_1(t)) = y(t)^\top u(t) - \begin{pmatrix}
				z_1(t)\\u(t)
			\end{pmatrix}^\top
			\underbrace{\begin{pmatrix}
					Q_{11} R_{11}Q_{11} & Q_{11} \hat{P}\\
					\hat{P}^\top Q_{11}& \hat{S}
			\end{pmatrix}}_{=:\hat W}
			\begin{pmatrix}
				z_1(t)\\u(t)
			\end{pmatrix}.
		\end{align}
		The transformations $U$ and $V$ satisfy
		\begin{align*}
			\begin{split}
				U^\top EV = \left(\begin{smallmatrix}
					I_{n_1}&0\\
					0&0
				\end{smallmatrix}\right), U^{-1}QV = \left(\begin{smallmatrix}
					Q_{11}&0\\
					0&Q_{22}
				\end{smallmatrix}\right),
				U^\top JU = \left(\begin{smallmatrix}
					{J}_{11}&{J}_{12}\\
					-{J}_{12}^\top&{J}_{22}
				\end{smallmatrix}\right),
				U^\top RU = \left(\begin{smallmatrix}
					{R}_{11}&{R}_{12}\\
					{R}_{12}^\top&{R}_{22}
				\end{smallmatrix}\right)
			\end{split}
		\end{align*}
		with $R_{12} = J_{12}$, $J_{22}-R_{22}$, $Q_{22}$ invertible, $Q_{11}=Q_{11}^\top\ge 0$, $U^\top B = \left(\begin{smallmatrix}
		B_1\\B_2
		\end{smallmatrix}\right)$. Further, abbreviating $L_{ij}\doteq  J_{ij}-R_{ij}$, $i,j=1,2$, we have $\hat{B}= B_1 - \frac12 L_{21}^\top L_{22}^{-\top}B_2$, \mbox{$\hat{P}= -\frac12 L_{21}^\top L_{22}^{-\top}B_2$}, and the symmetric resp.\ skew-symmetric parts of the feed-through operator are given by $\hat{S}= - \frac12 B_2^\top\left(L_{22}^{-1} + L_{22}^{-\top}\right)B_2$ and $\hat{N}= - \frac12 B_2^\top\left(L_{22}^{-1} - L_{22}^{-\top}\right)B_2$.
	\end{proposition}
	\begin{proof}
		The transformations $U$ and $V$ are defined in \cite[Section 5]{beattie2018linear}. Here, we add an additional state transformation $z_1 = E_{11}x_1$, where $E_{11}$ and $x_1$ are as in \cite[Section 5]{beattie2018linear} and transform the dynamics of \eqref{e:phDAE_OCP} via $\left(\begin{smallmatrix}z_1\\z_2\end{smallmatrix}\right) = V^{-1}x$ into
		\begin{subequations}
			\label{e:beattie}
			\begin{align}
				\label{e:beattie_dyn}
				\frac{\mathrm{d}}{\mathrm{d}t}
				\begin{pmatrix}
					I_{n_1}&0\\
					0&0
				\end{pmatrix}
				\begin{pmatrix}
					z_1\\z_2
				\end{pmatrix}
				&=
				\begin{pmatrix}
					\left(J_{11}-R_{11}\right)Q_{11}&0\\
					\left(J_{21}-R_{21}\right)Q_{11}&\left(J_{22}-R_{22}\right)Q_{22}
				\end{pmatrix}
				\begin{pmatrix}
					z_1\\ z_2
				\end{pmatrix}
				+ \begin{pmatrix}
					B_1\\B_2
				\end{pmatrix}u,\\
				\label{e:beattie_obs}
				y&=\begin{pmatrix}B_1^\top,B_2^\top\end{pmatrix}\begin{pmatrix}
					Q_{11}&0\\
					0&Q_{22}
				\end{pmatrix}
				\begin{pmatrix}
					z_1\\z_2
				\end{pmatrix},
			\end{align}
		\end{subequations}
		Further, as in \cite[Theorem 22]{beattie2018linear}, this decomposition and the invertibility of $(J_{22}-R_{22})Q_{22}$ yield the control system
		\begin{align}
			\begin{split}\label{e:phDAE_ODE}
				\dot z_1 = (J_{11}-R_{11})Q_{11} z_1 +(\hat{B}-\hat{P})u,\,
				\quad y = \left(\hat{B}+\hat{P}\right)^\top Q_{11}z_1 + \left(\hat{S}+\hat{N}\right)u,
			\end{split}
		\end{align}
		where $z_2$ is eliminated and uniquely given by \eqref{e:z2}. For the energy balance, see \cite[Proof of Theorem 22]{beattie2018linear}.
	\end{proof}

	\begin{remark}\label{r:xz}
		It follows from Proposition \ref{p:beattie} and Lemma \ref{l:hamiltonian} that, given a control $u\in L^1_{\rm loc}([0,\infty),\R^m)$, we have $x(t)^\top Q^\top RQx(t) = \smallvek{z_1(t)}{u(t)}^\top\hat W\smallvek{z_1(t)}{u(t)}$
		for all solutions $x$ of the DAE in \eqref{e:phDAE_OCP} and the corresponding  solutions $z_1$ of the ODE in \eqref{e:phDAE_ODE}. But in fact, an easy computation shows that
		\begin{align}\label{e:RW}
			\bar x^\top Q^\top RQ\bar x = \smallvek{\bar z_1}{\bar u}^\top\hat W\smallvek{\bar z_1}{\bar u}
		\end{align}
		holds for $\bar x,\bar z\in\R^n$ and $\bar u\in\R^m$ whenever $\bar z=V^{-1}\bar x$ and $\bar z_1,\bar z_2,\bar u$ are related via $\bar z_2 = -Q^{-1}_{22}L_{22}^{-1}\big(L_{21}Q_{11}\bar z_1 + B_2\bar u\big)$. In particular, $\hat W$ is positive semi-definite.
	\end{remark}
	\begin{remark}[Consequences of ODE-reformulation]\label{r:prelim_DAE}
		
		{\bf (a)} Once an OCP is feasible (i.e., the admissible set is non-empty), it is advisable to show the existence of optimal solutions. Since the OCPs \eqref{e:phDAE_OCP} and \eqref{e:phDAE_OCPODE} are equivalent, it suffices to consider the ODE-constrained OCP \eqref{e:phDAE_OCPODE}. And in fact, we prove in Corollary \ref{c:ODE_optsol} below that an optimal solution of OCP \eqref{e:phDAE_OCPODE} exists, whenever the state $z_1^0$ can be steered to a point in $\Phi_1$ at time $T$ under the given dynamics.
		
		{\bf (b)} In view of the energy balance  \eqref{e:DAE_diss} and $H(x) = \frac 12(Ex)^\top QE^\dagger(Ex)$, in the case $RQ=0$ the OCP \eqref{e:phDAE_OCP} is equivalent to minimizing the function $f(w) = w^\top QE^\dagger w$ on the set of states $w\in\Psi$ that are reachable from $w^0$ (inside $\im E$) at time $T$. We are not going to detail this case here. That is, we always assume that $RQ\neq 0$, which means that the system dissipates energy at certain states. Note that this is equivalent to $\hat W\neq 0$ by \eqref{e:RW}.
\end{remark}

\section{Optimal control of pH ODE systems with feed-through}\label{s:ODE}
Next, we  analyze the ODE-constrained OCP \eqref{e:phDAE_OCPODE} with regard to reachability aspects and the turnpike phenomenon showing that optimal solutions stay close to the conservative subspace $\ker\hat W$ for the majority of the time. These properties will then be translated back to the corresponding properties of the original DAE-constrained OCP \eqref{e:phDAE_OCP} in Section~\ref{s:DAE2}.

Throughout this section,  we consider OCPs with dynamic constraints given by port-Hamilto\-ni\-an ODE systems with feed-through
\begin{subequations}\label{e:phODE}
	\begin{align}
		\dot x &= (J-R)Qx + (B-P)u\label{e:phODE_dyn}, \qquad x(0) = x^0,\\
		y &= (B+P)^\top Qx + Du\label{e:phODE_out}.
	\end{align}
\end{subequations}
Moreover, $J,R,Q\in\R^{n\times n}$ and $D\in\R^{m\times m}$ are constant matrices satisfying
\begin{equation}\label{e:phODE_matrix_conditions}
	J=-J^\top,\quad R=R^\top\ge 0,\quad Q=Q^\top\ge 0,\quad S\doteq\tfrac 12(D+D^\top)\ge 0
\end{equation}
and $B,P\in\R^{n\times m}$ such that
\begin{align}\label{e:W}
	W \doteq   \begin{pmatrix}QRQ & QP\\
		P^\top Q & S\end{pmatrix}
	\ge 0.
\end{align}
The latter condition is obviously trivially satisfied if $P=0$.

In \cite{Schaller2020a} we made first observations concerning the special case with $P=0$, $D=0$, and $Q>0$. Subsequently, we go beyond~\cite{Schaller2020a}, i.e., we explore reachability properties of \eqref{e:phODE} and prove novel results concerning input-state subspace turnpikes as well as an adjoint turnpike for the ODE-constrained OCP~\eqref{e:phDAE_OCPODE}.

\begin{remark}[Dissipative Hamiltonian matrices]
	In Appendix \ref{a:diss-ham-mat} we characterize the matrices $A$ which can be written in the form $A = (J-R)Q$ (see Theorem \ref{t:ph-charac}). It turns out that this property is purely spectral, i.e., it can be read off the Jordan canonical form of $A$.
\end{remark}

We say that a linear map $T : \calL\to\calL$, mapping a subspace $\calL\subset\R^n$ (or $\C^n$) to itself is {\em $Q$-symmetric} ({\em $Q$-skew-symmetric}, {\em $Q$-positive semi-definite}), if it has the respective property with respect to the positive semi-definite inner product $[\cdot,\cdot] \doteq   \<Q\cdot,\cdot\>$, restricted to $\calL$. For example, $T$ is $Q$-skew-symmetric if $[Tx,y] = -[x,Ty] \,\, \forall x,y\in\calL$.

Let $A \doteq   (J-R)Q$, where $J,R,Q$ are as in \eqref{e:phODE_matrix_conditions}. By Theorem \ref{t:ph-charac} none of the eigenvalues of $A$ has a positive real part. It follows from the real Jordan form that there exists a (spectral) decomposition
\begin{equation}\label{e:decompN1}
	\R^n = N_1\oplus N_2
\end{equation}
such that both subspaces $N_1$ and $N_2$ are $A$-invariant, $\sigma(A|_{N_1})\subset\rm{i}\R$, and $A|_{N_2}$ is Hurwitz. In particular, we have $A = A_1\oplus A_2$ with respect to the decomposition \eqref{e:decompN1}. The next proposition provides some geometrical insight which also explains the behavior of optimal solutions, see Subsection \ref{ss:example}.

\begin{proposition}\label{p:decomp}
	Let $J,R,Q$ be as in \eqref{e:phODE_matrix_conditions}. Then the decomposition \eqref{e:decompN1} is $Q$-orthogonal, i.e., $\R^n = N_1\oplus_Q N_2$, and $
	\ker Q\,\subset\,N_1\,\subset\,\ker(RQ).$
	Moreover, the representation of $(J-R)Q$ with respect to the decomposition \eqref{e:decompN1} has the form
	\begin{equation}\label{e:cons_diss_dec}
		(J-R)Q = \mat{J_1}00{J_2-R_2},
	\end{equation}
	where $J_1$ and $J_2$ are $Q$-skew-symmetric in $N_1$ and $N_2$, respectively, $R_2$ is $Q$-positive semi-definite, and $J_2-R_2$ is Hurwitz. The eigenvalues of both $J_1$ and $J_2$ are purely imaginary.
\end{proposition}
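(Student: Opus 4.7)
The entire argument hinges on the Lyapunov-type identity
\[
QA + A^\top Q = -2\,QRQ, \qquad A := (J-R)Q,
\]
which follows immediately from $J^\top = -J$ and $R = R^\top$. I would structure the proof in three steps: (i) establish the inclusion $N_1\subset\ker(RQ)$; (ii) deduce the $Q$-orthogonality of the spectral splitting; (iii) read off the block structure of $A$ on $N_1$ and $N_2$.

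Step 1 proceeds by induction on the length of a Jordan chain of $A$ at a purely imaginary eigenvalue $\lambda=i\omega$. For a bare eigenvector $x\in\C^n$, testing the identity against $x$ sesquilinearly gives
\[
-2\|R^{1/2}Qx\|^2 \,=\, \langle(QA+A^\top Q)x, x\rangle \,=\, 2\Re(\lambda)\|Q^{1/2}x\|^2 \,=\, 0,
\]
so $RQx=0$. For a chain $x_1,\dots,x_k$ with $Ax_j=\lambda x_j+x_{j-1}$ and $x_0:=0$, pairing the identity against $(x_k,x_k)$ yields $\|R^{1/2}Qx_k\|^2 = -\Re\langle Qx_{k-1},x_k\rangle$, while pairing it against $(x_{k-1},x_k)$ and using $R^{1/2}Qx_j=0$ for $j<k$ from the inductive hypothesis reduces to $\|Q^{1/2}x_{k-1}\|^2 = -\Re\langle Qx_{k-2},x_k\rangle$. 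Since $Qx_{k-2}=0$ by induction, we get $Qx_{k-1}=0$ and thus $R^{1/2}Qx_k=0$. A byproduct is that nontrivial chains occur only at $\lambda=0$, because $Qx_1=0$ combined with $Ax_1=\lambda x_1\neq 0$ is impossible. The remaining inclusion $\ker Q\subset N_1$ is immediate from $\ker Q\subset\ker A$.

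Step 2 is a passage-to-the-limit. For $x\in N_1$ and $y\in N_2$, the $A$-invariance of $N_1$ together with $N_1\subset\ker(RQ)$ imply $\tfrac{d}{dt}\langle Qe^{tA}x, e^{tA}y\rangle = -2\langle QRQe^{tA}x, e^{tA}y\rangle = 0$, so the pairing is constant in $t$. The crucial estimate is that $Qe^{tA}x$ stays bounded as $t\to\infty$: this uses the byproduct from Step 1 that $Q$ annihilates every non-terminal element of a Jordan chain at $\lambda=0$, which kills the polynomial growth of $e^{tA}|_{N_1}$. Combined with the exponential decay of $e^{tA}y$ (since $A|_{N_2}$ is Hurwitz), one concludes $\langle Qx,y\rangle = \lim_{t\to\infty}\langle Qe^{tA}x, e^{tA}y\rangle = 0$, i.e.\ the $Q$-orthogonality of the splitting.

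Step 3 assembles the block decomposition. The $Q$-orthogonality makes the spectral projection $P_2$ onto $N_2$ along $N_1$ $Q$-self-adjoint. Setting $J_1:=A|_{N_1}$, which by Step 1 coincides with $JQ|_{N_1}$, gives $Q$-skew-symmetry from the identity $(QJQ)^\top=-QJQ$. Defining $J_2:=P_2(JQ)|_{N_2}$ and $R_2:=P_2(RQ)|_{N_2}$, the $Q$-self-adjointness of $P_2$ together with the $Q$-orthogonality yields $Q$-skew-symmetry of $J_2$ and $Q$-positive semi-definiteness of $R_2$, while $J_2-R_2 = A|_{N_2}$ is Hurwitz by construction. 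Finally, $Q$ is positive definite on $N_2$: if $Qx=0$ for some $x\in N_2$, then $Ax=0\in\ker A\subset N_1$, forcing $x\in N_1\cap N_2=\{0\}$. Hence $J_2$ is skew on $N_2$ in a genuine inner product, so $\sigma(J_2)\subset i\R$. The main obstacle I expect is Step 1: the induction requires simultaneously tracking membership in $\ker(RQ)$ \emph{and} the vanishing of $Q$ on non-terminal chain elements, and both of these facts reappear as indispensable ingredients of the boundedness estimate in Step 2.
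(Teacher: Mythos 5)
Your proof is correct, but your route to the $Q$-orthogonality of $N_1$ and $N_2$ is genuinely different from the paper's. The paper first invokes Theorem \ref{t:ph-charac} (formulas \eqref{e:ialpha} and \eqref{e:atzero}) to obtain $N_1\subset\ker(RQ)$ --- facts your Step 1 re-derives in the same spirit, by testing the identity $QA+A^\top Q=-2QRQ$ along Jordan chains --- and then proves the orthogonality purely algebraically: a nested induction showing $\calL_\la(A)\,\gperp\,\calL_\mu(A)$ whenever $\la\neq\mu$ and $\calL_\mu(A)\subset\ker(RQ)$, applied with $\mu$ imaginary and $\la$ in the open left half-plane. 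You replace that induction by a dynamical argument: $\<Qe^{tA}x,e^{tA}y\>$ is constant in $t$ because $e^{tA}x$ remains in $N_1\subset\ker(RQ)$, and it tends to zero because $A|_{N_2}$ is Hurwitz while $Qe^{tA}x$ stays bounded --- the boundedness resting exactly on your Step-1 byproducts (semisimplicity of the nonzero imaginary eigenvalues and $Q$ annihilating non-terminal chain elements at $\la=0$, so that $Qe^{tA}x=e^{{\rm i}\omega t}Qx$ componentwise). Both arguments are sound: the paper's algebraic version gives the slightly more general pairwise orthogonality of root subspaces and needs no growth estimate on the semigroup, whereas yours is shorter for the single splitting $N_1\oplus N_2$ once the chain facts are available, but, as you note, it uses them twice (for the inclusion and for the boundedness). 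Your Step 3 coincides with the paper's block assembly, including the positivity of $[\cdot,\cdot]$ on $N_2$; the only omission is the trivial remark that $\sigma(J_1)\subset{\rm i}\R$, which holds because $J_1=A|_{N_1}$ and $N_1$ was defined by precisely that spectral condition.
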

\begin{proof}
	Let $A \doteq   (J-R)Q$. By definition of $N_1$, we have $\ker Q\subset\ker A\subset N_1$. Denote by $\calL_\la(A)$ the complex algebraic eigenspace of $A$ at $\la\in\C$, i.e., $\calL_\la(A) = \bigcup_{k=0}^n\ker\big((A-\la)^k\big)\,\subset\,\C^n.$
	For a set $\Delta\subset\C$ we will also use the notation $\calL_\Delta(A) \doteq   \linspan\{\calL_\la(A) : \la\in\Delta\}\,\subset\,\C^n$. Denote by $\gperp$ the orthogonality relation with respect to $\product$. We shall now prove that
	\begin{equation}\label{e:orthogonal}
		\la\neq\mu,\;\calL_\mu(A)\subset\ker(RQ)\quad\Lra\quad\calL_\la(A)\;[\perp]\;\calL_\mu(A).
	\end{equation}
	To see this, let us assume that we have already shown that $\ker((A-\la)^k)\,[\perp]\,\calL_\mu(A)$ for some $k\in\N_0$. Let $(A-\la)^{k+1}y=0$ and set $x = (A-\la)y$. Then, by assumption, $x\in(\calL_\mu(A))^\gperp$. Let us furthermore assume that we have already proved that $y\,\gperp\,\ker(A-\mu)^j$ for some $j\in\N_0$. Let $z\in\ker(A-\mu)^{j+1}$ and set $w \doteq   (A-\mu)z$. Then $[y,w]=0$ and thus (as $RQz=0$),
	\begin{align*}
		\ol\la[z,y]
		&= [z,\la y] = [z,Ay-x] = [z,Ay] = \<Qz,(J-R)Qy\> = -\<(J+R)Qz,Qy\>\\
		&= -\<(J-R)Qz,Qy\> = -[Az,y] = -[\mu z+w,y] = -\mu[z,y],
	\end{align*}
	hence $(\ol\la+\mu)[z,y] = 0$. Note that $\ol\la+\mu = 0$ implies that $(\Re\la)(\Re\mu)<0$ (in which case one of $\calL_\la(A)$ and $\calL_\mu(A)$ is trivial) or $\la=\mu$, which we excluded. Hence, $[z,y]=0$ and \eqref{e:orthogonal} is proved.
	
	We set $N_1' \doteq   \calL_{\rm{i}\R}(A)$ and $N_2' \doteq   \calL_{\C^-}(A)$, where $\C^-\doteq  \{z\in\C : \Re z < 0\}$. Obviously, we have $N_1 = N_1'\cap\R^n$ and $N_2= N_2'\cap\R^n$. Since $N_1'\subset\ker(RQ)$ (see \eqref{e:ialpha} and \eqref{e:atzero}), Equation~\eqref{e:orthogonal} shows that $N_1'\,\gperp\,N_2'$ and thus the $Q$-orthogonality of $N_1$ and $N_2$. Hence, $A$ decomposes as in \eqref{e:cons_diss_dec}, where $J_1 = JQ|_{N_1}$, $J_2 = P_2JQ|_{N_2}$, and $R_2 = P_2RQ|_{N_2}$ with $P_2$ denoting the projection onto $N_2$ along $N_1$. It is easy to see that $J_1$ is $Q$-skew-symmetric and that $P_2$ is $Q$-symmetric. The latter implies that $J_2$ is $Q$-skew-symmetric and $R_2$ is $Q$-symmetric and $Q$-non-negative. By construction, we have $\sigma(J_1)\subset\rm{i}\R$. Finally, it follows from $\ker Q\subset N_1$ that $\product$ is positive definite on $N_2$, and hence also $\sigma(J_2)\subset\rm{i}\R$.
\end{proof}

\begin{remark}
	Note that $\ker R_2$ might still be non-trivial. A simple example is given by $Q = I_2$, $J = \smat 01{-1}0$, and $R = \smat 2000$, in which case $N_1=\{0\}$ and therefore $\ker R_2 =\ker R\neq\{0\}$.
\end{remark}

\subsection{Reachability properties}\label{ss:reachable}
As already mentioned in Section \ref{s:DAE1}, we  consider control constraints $u\in\UU$ where $\UU$ is a convex and compact set  with $0\in\inte\UU$. Let $t>0$ and $x\in\R^n$. We say that $z\in\R^n$ is {\em reachable} from $x$ at time $t$ under the dynamics in \eqref{e:phODE_dyn}, if there exists a feasible $u\in L^1(0,t;\UU)$ such that the corresponding state response satisfies $x(t;x,u) = z$. By $\RF_t(x)$ we denote
the set of all states that are reachable from $x$ at time $t$. Similarly, we denote by $\RT_t(x)$ the set of states from which $x$ is reachable (i.e., which can be controlled/steered to $x$) at time $t$. Clearly, $\RT_t(x)$ equals $\RF_t(x)$ with respect to the dynamics in reverse time $\dot x = -(J-R)Qx - (B-P)u$. Moreover, we set $\RF_0(x) \doteq   \RT_0(x) \doteq   \{x\}$ and
$$\RF(x) \doteq   \bigcup_{t\ge 0}\RF_t(x),
\quad \RT(x) \doteq   \bigcup_{t\ge 0}\RT_t(x), \quad
\RF(\Phi) \doteq   \bigcup_{x\in\Phi}\RF(x),
\quad
\RT(\Phi) \doteq   \bigcup_{x\in\Phi}\RT(x).
$$
Hence, $\RT(\Phi)$ is the set of states that can be steered into $\Phi$.
It is well-known that the sets $\RF_t(x)$ and $\RT_t(x)$ are compact and convex for each $t\ge 0$ and each $x\in\R^n$, see, e.g., \cite[Chapter 2, Theorem 1]{leemarkus}.

\subsubsection{Reachability of steady states}
Recall the Kalman controllability matrix $K(A,B) \doteq (B,AB,\ldots,A^{n-1}B)$ of a linear time-invariant control system $(A,B)$ in $\R^n$, i.e., $\dot{x}=Ax+Bu$.
If $\rank K(A,B) = n$, the linear control system $(A,B)$ is {\em controllable}. 

\begin{lemma}[Reachable sets of input-constrained pH systems]\label{l:reachable_zero}
	Consider the pH-system \eqref{e:phODE} with convex and compact input constraint set $\UU$, $0\in\inte\UU$. Let $\calX \doteq   \im K((J-R)Q,B-P)$. Then the following hold:
	\begin{enumerate}
		\item[{\rm (i)}]   $\RT(0) = \calX$.
		\item[{\rm (ii)}]  $\RF(0)\subset\calX$ is convex and relatively open in $\calX$.
		\item[{\rm (iii)}] If $0 < t_1 < t_2$, then $\RT_{t_1}(0)\subset\inte_{\calX}\RT_{t_2}(0)$,
	\end{enumerate}
	where $\inte_\calX$ denotes the interior with respect to the subspace topology of $\calX$.
\end{lemma}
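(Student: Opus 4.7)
Writing $A \doteq (J-R)Q$ and $\tilde B \doteq B - P$, I would organize the argument around two structural observations. First, $\calX = \im K(A, \tilde B)$ is $A$-invariant and contains $\im \tilde B$, so every trajectory issuing from $0$ stays in $\calX$; this immediately yields $\RF(0), \RT(0) \subset \calX$. Second, because $0 \in \inte \UU$ the constant control $u \equiv 0$ is admissible, so any trajectory can rest at the origin. This lets me prepend or append a rest period to any steering control, yielding the monotonicities $\RF_s(0) \subset \RF_t(0)$ and $\RT_s(0) \subset \RT_t(0)$ for $0 \leq s \leq t$. Combined with the classical convexity of each $\RF_t(0)$, $\RT_t(0)$ recorded in \cite{leemarkus}, the nested unions $\RF(0)$ and $\RT(0)$ are themselves convex.

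For the openness in (ii), the plan is to first establish small-time local controllability inside $\calX$: for each $t > 0$, $\RF_t(0)$ is a relative neighborhood of $0$ in $\calX$. This follows by applying the open mapping theorem to the (Kalman-surjective) linear map $u \mapsto \int_0^t e^{A(t-s)} \tilde B u(s)\,ds$ from $L^\infty(0,t;\R^m)$ onto $\calX$; the condition $0 \in \inte \UU$ keeps the image of a sufficiently small $L^\infty$-ball admissible. Given $x \in \RF_t(0)$ reached by a control $u^*$, I would then concatenate: a short control on $[0, \epsilon]$ reaching any $y$ in a relative neighborhood of $0$ in $\calX$, followed by $u^*(\cdot - \epsilon)$ on $[\epsilon, t+\epsilon]$. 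The resulting endpoint is $e^{A t} y + x$, and since $e^{At}$ is an automorphism of $\calX$, varying $y$ sweeps out a relative neighborhood of $x$ inside $\RF_{t+\epsilon}(0)$, giving $x \in \inte_\calX \RF(0)$. For (iii), I would recycle this concatenation argument on the time-reversed system $\dot y = -Ay - \tilde B v$, $v \in \UU$, noting that $\RT_t(0)$ for the original system equals the forward reachable set at time $t$ for the reversed one, and that both the Kalman subspace and the admissible control set are preserved under reversal; choosing $\epsilon = t_2 - t_1 > 0$ then yields $\RT_{t_1}(0) \subset \inte_\calX \RT_{t_2}(0)$.

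The main obstacle I anticipate is (i), the identity $\RT(0) = \calX$. The inclusion ``$\subset$'' is immediate from the invariance observation, but the converse needs more than local controllability: it requires a Brammer-type null-controllability theorem asserting that, for the restricted pair $(A|_\calX, \tilde B)$ with $u \in \UU$ convex and compact and $0 \in \inte\UU$, every state in $\calX$ is controllable to the origin in finite time if and only if $(A|_\calX, \tilde B)$ is Kalman controllable \emph{and} $\sigma(A|_\calX) \subset \overline{\C^-}$. The first condition is automatic from the definition of $\calX$; the crucial second condition is precisely the spectral consequence of the port-Hamiltonian structure furnished by Proposition~\ref{p:decomp}. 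The delicate point is this interplay between the constrained input set and the spectral bound $\sigma((J-R)Q) \subset \overline{\C^-}$ inherited from dissipativity — without that bound, bounded controls would in general leave a proper subset of $\calX$ uncontrollable to the origin.
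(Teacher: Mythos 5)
Your argument is correct, and its skeleton matches the paper's: restrict to $\calX=\im K((J-R)Q,B-P)$, which is $A$-invariant and contains $\im\wt B$, so that $(\wt A,\wt B)\doteq(A|_\calX,\wt B)$ is controllable, note that the time-reversed pair is controllable as well, and settle part (i) by the classical constrained null-controllability theorem (controllability plus $\sigma(\wt A)\subset\{\Re\la\le 0\}$, the spectral bound coming from the dissipative Hamiltonian structure). The paper cites Theorem \ref{t:ph-charac} for that bound and then disposes of (i)--(iii) entirely by citation, namely \cite[p.~45, Theorems 1,2,3,5]{Macki2012} for (i)--(ii) and \cite[Corollary 17.1]{hermeslasalle} for (iii); your reference to Proposition \ref{p:decomp} for the spectral condition is an equally valid source within the paper, since $\sigma(A|_{N_1})\subset i\R$ and $A|_{N_2}$ Hurwitz give $\sigma(\wt A)\subset\sigma(A)\subset\overline{\C^-}$. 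Where you genuinely depart from the paper is in (ii) and (iii): instead of citing the textbook results, you prove relative openness and the strict nesting $\RT_{t_1}(0)\subset\inte_\calX\RT_{t_2}(0)$ directly, via surjectivity (hence openness) of the map $u\mapsto\int_0^t e^{A(t-s)}\wt Bu(s)\,ds$ onto $\calX$ on a small $L^\infty$-ball kept admissible by $0\in\inte\UU$, followed by concatenation and the fact that $e^{\pm tA}|_\calX$ is an automorphism of $\calX$; likewise you obtain convexity of the unions $\RF(0)$, $\RT(0)$ from the monotonicity $\RF_s(0)\subset\RF_t(0)$ (resting at the equilibrium $0$ with $u\equiv 0$) together with convexity of each $\RF_t(0)$ from \cite{leemarkus}. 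These steps are sound (the endpoint computations $e^{At}y+x$, resp.\ $e^{-At_1}y+x$ for the reversed system, are correct), and what they buy is a more self-contained treatment of (ii)--(iii) at the cost of a longer write-up; what the paper's route buys is brevity, at the cost of outsourcing everything except the reduction to $\calX$ and the spectral observation to the cited monographs. The only part you do not prove is the Brammer/LaSalle-type global null-controllability theorem behind (i) -- but the paper does not prove it either, so this is a citation, not a gap.
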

\begin{proof}
	Set $A \doteq   (J-R)Q$ and $\wt B \doteq   B-P$. From the variation of constants formula it easily follows that $\RF(0)\subset\calX$ and $\RT(0)\subset\calX$. Note that $\calX$ is invariant under $A$ and that $\wt B\bar u\in\calX$ for each $\bar u\in\R^m$. Let $\wt A \doteq   A|_{\calX}$. We consider the system $\dot x = \wt Ax + \wt Bu$ in $\calX$ (hence, also with initial values $x(0)\in\calX$). Then $(\wt A,\wt B)$ is controllable and the same is true for the time-reversed system $(-\wt A,-\wt B)$. Also note that $\Re\la\le 0$ for each eigenvalue $\la$ of $\wt A$ (cf.\ Theorem \ref{t:ph-charac}). The properties (i) and (ii) now follow from \cite[p.\ 45, Theorems 1,2,3,5]{Macki2012}, and (iii) is a consequence of \cite[Corollary 17.1]{hermeslasalle}.
\end{proof}
Let $x_T\in\R^n$ and $x^0\in\RT(x_T)$. Then there exists a minimal time $T(x^0;x_T)$ at which $x_T$ is reachable from $x^0$ (see\footnote{The theorem in \cite{Macki2012} is formulated for $x_T=0$ but the proof also works for $x_T\neq 0$.} \cite[p.\ 60, Theorem 1]{Macki2012}). This defines a {\em minimal time function} $T(\,\cdot\,;x_T) : \RT(x_T)\to [0,\infty)$. 
By $B_\veps(S)$ we denote the open $\veps$-neighborhood of a set $S\subset\R^k$. We also write $B_\veps(x) \doteq   B_\veps(\{x\})$.

\begin{corollary}\label{c:mintime}
	The minimal time function $T(\,\cdot\,;0)\!:\!\RT(0)\!\to \![0,\infty)$ is continuous.
\end{corollary}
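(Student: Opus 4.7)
The plan is to prove continuity by separately establishing upper and lower semi-continuity of $T(\,\cdot\,;0)$ on $\RT(0)=\calX$, where the identification of the domain with $\calX$ comes from Lemma~\ref{l:reachable_zero}(i). Both halves will rest on Lemma~\ref{l:reachable_zero}(iii), which simultaneously delivers the monotonicity $\RT_{t_1}(0)\subset\RT_{t_2}(0)$ for $0<t_1<t_2$ and the stronger fact that $\RT_{t_1}(0)$ sits in the relative interior of $\RT_{t_2}(0)$.

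For upper semi-continuity at $x^0\in\calX$ with $t_0\doteq T(x^0;0)$, I first note $x^0\in\RT_{t_0}(0)$: this is trivial for $t_0=0$ (then $x^0=0$), and for $t_0>0$ it is the existence of a time-optimal control recalled just before the statement of the corollary. Next, fix $\eps>0$ and set $t_1\doteq\max\{t_0,\eps/2\}\in(0,t_0+\eps)$. Then $x^0\in\RT_{t_1}(0)$ — either by monotonicity if $t_0>0$, or by letting $u\equiv 0$ on $[0,t_1]$ if $t_0=0$. Applying Lemma~\ref{l:reachable_zero}(iii) with $t_1<t_0+\eps$ yields $\RT_{t_1}(0)\subset\inte_\calX\RT_{t_0+\eps}(0)$, so some relative neighbourhood $U$ of $x^0$ in $\calX$ lies inside $\RT_{t_0+\eps}(0)$. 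Consequently $T(x;0)\le t_0+\eps$ for every $x\in U$, which is USC.

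For lower semi-continuity I argue by contradiction: suppose $x_n\to x^0$ in $\calX$ but $\liminf_n T(x_n;0)=\bar t<t_0$. Passing to a subsequence, $t_n\doteq T(x_n;0)\to\bar t$, and we choose $t'\in(\bar t,t_0)$. For $n$ large, $t_n\le t'$, hence $x_n\in\RT_{t_n}(0)\subset\RT_{t'}(0)$ by monotonicity. Since $\RT_{t'}(0)$ is compact (and therefore closed) by the Lee--Markus fact recalled at the end of Subsection~\ref{ss:reachable}, the limit satisfies $x^0\in\RT_{t'}(0)$, contradicting $T(x^0;0)=t_0>t'$.

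The only genuinely delicate point is the boundary case $t_0=0$ in the USC step, where Lemma~\ref{l:reachable_zero}(iii) cannot be applied with $t_1=0$ directly; inserting an auxiliary $t_1>0$ and bridging with the trivial control $u\equiv0$ sidesteps this cleanly. Everything else is an immediate consequence of Lemma~\ref{l:reachable_zero} together with compactness of the reachable sets.
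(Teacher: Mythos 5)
Your proof is correct, and it deviates from the paper's argument in one meaningful respect. The paper handles both bounds at once: it invokes the fact that $x^0\in\partial_\calX\RT_{t_0}(0)$ for $t_0=T(x^0;0)$ (citing Hermes--LaSalle) and then uses both nesting inclusions of Lemma~\ref{l:reachable_zero}\,(iii), namely $\RT_{t_0-\veps}(0)\subset\inte_\calX\RT_{t_0}(0)$ and $\RT_{t_0}(0)\subset\inte_\calX\RT_{t_0+\veps}(0)$, to produce a single relative ball on which $|T(\cdot\,;0)-t_0|\le\veps$. You instead split into upper and lower semicontinuity: your USC step is essentially the paper's upper estimate (a neighbourhood of $x^0$ inside $\RT_{t_0+\veps}(0)$ via Lemma~\ref{l:reachable_zero}\,(iii)), but your LSC step replaces the boundary characterization by a sequential argument using only the closedness (compactness) of $\RT_{t'}(0)$, already recalled in Subsection~\ref{ss:reachable}, together with the definition of the minimal time: if $x_n\to x^0$ with $T(x_n;0)\le t'<t_0$ then $x^0\in\RT_{t'}(0)$, a contradiction. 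This buys you a slightly more elementary and self-contained proof (no appeal to the Hermes--LaSalle boundary lemma; attainment of the minimal time and compactness of reachable sets suffice), and you also treat the edge case $t_0=0$ explicitly via the auxiliary time $t_1=\max\{t_0,\veps/2\}$ and the control $u\equiv0$, a case the paper's formulation glosses over (for $t_0\le\veps$ the lower bound is vacuous anyway). The paper's route, once the boundary fact is granted, is marginally more compact since it yields the two-sided estimate in one stroke.
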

\begin{proof}
	We adopt the notation from Lemma \ref{l:reachable_zero} and its proof. Recall that $(\wt A,\wt B)$ is controllable (in $\calX = \RT(0)$). Let $x^0\in\calX$, $\veps > 0$, and set $t = T(x^0;0)$. Then $x^0\in\partial_\calX\RT_t(0)$ (see \cite[Lemma 13.1]{hermeslasalle}). By Lemma \ref{l:reachable_zero} (iii) we have $\RT_{t-\veps}(0)\subset\inte_\calX\RT_t(0)$ and $\RT_t(0)\subset\inte_\calX\RT_{t+\veps}(0)$. Hence, there exists $\delta_1 > 0$ such that $B^\calX_{\delta_1}(x^0)\subset\RT_{t+\veps}(0)$, where $B^\calX_r(x) \doteq   B_r(x)\cap\calX$. On the other hand, there exists $\delta_2>0$ such that $B^\calX_{\delta_2}(x^0)\cap\RT_{t-\veps}(0) = \emptyset$. Indeed, otherwise we had $x^0\in\RT_{t-\veps}(0)$ and thus $x^0\in\inte_\calX\RT_t(0)$, which contradicts $x^0\in\partial_\calX\RT_t(0)$. Hence, choosing $\delta = \min\{\delta_1,\delta_2\}$, we have $B^\calX_\delta(x^0)\subset\RT_{t+\veps}(0)\backslash\RT_{t-\veps}(0)$. For $x\in B^\calX_\delta(x^0)$ this implies $t-\veps\le T(x;0)\le t+\veps$, or, equivalently, $|T(x;0)-T(x^0;0)|\le\veps$.
\end{proof}
A pair $(\bar x,\bar u)\in\R^n\times\UU$ is called a {\em steady state} (or {\em controlled equilibrium}) of the dynamics in \eqref{e:phODE_dyn} if $(J-R)Q\bar x + (B-P)\bar u = 0$. In the following, by $\RTV(x)$ and $\RFV(x)$ we denote the reachable sets for \eqref{e:phODE_dyn} with $L^1$-controls taking on their values in $\VV\subset\R^m$.

\begin{lemma}\label{l:karfreitag}
	Let $(\bar x,\bar u)\in\R^n\times\UU$ be a steady state of \eqref{e:phODE_dyn} and set $\VV \doteq   \UU - \bar u$. Then for $t\ge 0$ we have $\RT_t(\bar x) = \bar x + \RTV(0)$ and $\RF_t(\bar x) = \bar x + \RFV(0)$.
\end{lemma}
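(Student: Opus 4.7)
The identity is a standard shift-of-origin argument that exploits the affine structure of the dynamics around the steady state. The key observation is that the drift term at $(\bar x_e,\bar u_e)$ vanishes, so subtracting the steady state turns \eqref{e:phODE_dyn} into itself but with a translated control constraint.

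Concretely, the plan is to show each of the two set equalities by two inclusions. For $\RF_t(\bar x_e) \subseteq \bar x_e + \RFV(0)$, I would pick $z\in\RF_t(\bar x_e)$ with associated control $u\in L^1(0,t;\UU)$ and state trajectory $x(\,\cdot\,;\bar x_e,u)$ satisfying $x(t)=z$, and define
$$
\tilde x \doteq x - \bar x_e, \qquad v \doteq u - \bar u_e.
$$
A short calculation using $(J-R)Q\bar x_e + (B-P)\bar u_e = 0$ shows
$$
\dot{\tilde x} = (J-R)Qx + (B-P)u = (J-R)Q\tilde x + (B-P)v,
$$
so $\tilde x$ satisfies the same dynamics with control $v$. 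Since $u(s)\in\UU$ a.e., we have $v(s)\in\UU-\bar u_e=\VV$ a.e., and $v\in L^1$. By construction $\tilde x(0)=0$ and $\tilde x(t) = z-\bar x_e$, hence $z-\bar x_e\in\RFV(0)$, proving the inclusion. The reverse inclusion $\bar x_e + \RFV(0)\subseteq\RF_t(\bar x_e)$ is proved by reversing this procedure: given $w\in\RFV(0)$ with control $v\in L^1(0,t;\VV)$ steering $0$ to $w$, set $u\doteq v+\bar u_e\in L^1(0,t;\UU)$ and $x\doteq\tilde x+\bar x_e$; the same cancellation shows $x$ solves \eqref{e:phODE_dyn} with $x(0)=\bar x_e$ and $x(t) = w+\bar x_e$.

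The claim for $\RT_t(\bar x_e)$ is entirely analogous. If $z\in\RT_t(\bar x_e)$, choose $u\in L^1(0,t;\UU)$ steering $z$ into $\bar x_e$ at time $t$, apply the same substitution $\tilde x = x-\bar x_e$, $v = u-\bar u_e$, and note that $\tilde x$ now steers $z-\bar x_e$ to $0$ under the $\VV$-constrained dynamics, so $z-\bar x_e\in\RTV(0)$. The converse inclusion follows in the same way.

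There is no real obstacle here beyond bookkeeping; the only point to check carefully is the affine cancellation of the constant term, which relies precisely on the steady-state identity, and the observation that affine translation of an $L^1$ function preserves $L^1$-integrability and shifts its essential range by $\bar u_e$.
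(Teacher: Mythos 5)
Your proposal is correct and follows essentially the same shift-of-origin idea as the paper: translate state and control by the steady state and use $(J-R)Q\bar x_e+(B-P)\bar u_e=0$ to cancel the constant term. The only cosmetic difference is that the paper performs the cancellation inside the variation-of-constants formula (using $e^{-tA}\bar x_e-\bar x_e=\int_0^t e^{-sA}B\bar u_e\,ds$ for the time-reversed dynamics), whereas you substitute directly at the level of the ODE; both are equally valid.
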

\begin{proof}
	We have $x\in\RT_t(\bar x)$ if and only if $x = e^{-tA}\bar x - \int_0^te^{-sA}Bu(s)\,\text{d}s$ with $u\in L^1(0,t;\UU)$. Since $\tfrac {\text{d}}{\text{d}s}e^{-sA}\bar x = e^{-sA}(-A\bar x) = e^{-sA}B\bar u$, and therefore $e^{-tA}\bar x - \bar x = \int_0^te^{-sA}B\bar u\,\text{d}s$, it follows that $x\in\RT_t(\bar x)$ if and only if $x = \bar x - \int_0^te^{-sA}Bv(s)\,\text{d}s$ with $v\in L^1(0,t;\VV)$. This proves the claim for $\RT_t(\bar x)$. The claim for $\RF_t(\bar x)$ is proved similarly.
\end{proof}
If $\bar u\in\inte\UU$, then also $\VV = \UU-\bar u$ is compact, convex, and contains $0$ in its interior. Hence, the results obtained so far immediately imply the next corollary.

\begin{corollary}\label{c:mintime_ss}
	Let $(\bar x,\bar u)\in\R^n\times\inte\UU$ be a steady state of \eqref{e:phODE_dyn}. Then:
	\begin{enumerate}
		\item[{\rm (i)}]  $\RT(\bar x) = \bar x + \im K((J-R)Q,B-P)$.
		\item[{\rm (ii)}] The minimal time function $T(\,\cdot\,;\bar x) : \RT(\bar x)\to [0,\infty)$ is continuous.
	\end{enumerate}
\end{corollary}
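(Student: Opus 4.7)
The plan is to reduce both statements to the already-established origin-case (Lemma \ref{l:reachable_zero} and Corollary \ref{c:mintime}) by means of the shift described in Lemma \ref{l:karfreitag}. Set $\VV \doteq \UU - \bar u_e$. Since $\bar u_e \in \inte\UU$ and $\UU$ is convex and compact, $\VV$ is itself convex, compact, and contains $0$ in its interior. Therefore the hypotheses of Lemma \ref{l:reachable_zero} and Corollary \ref{c:mintime} are satisfied when the control constraint set is replaced by $\VV$.

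For (i), I would start from Lemma \ref{l:karfreitag}, which gives $\RT_t(\bar x_e) = \bar x_e + \RTV(0)$ for every $t \geq 0$. Taking the union over $t \geq 0$ on both sides yields $\RT(\bar x_e) = \bar x_e + \RTV(0)$. Applying Lemma \ref{l:reachable_zero}(i) to the control system with constraint set $\VV$ then gives $\RTV(0) = \im K((J-R)Q, B-P)$, since the controllability matrix does not depend on the constraint set. Combining these identities yields the claimed formula.

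For (ii), denote by $T_\VV(\,\cdot\,;0)$ the minimal time function for the system \eqref{e:phODE_dyn} with control constraint $\VV$. The identity from Lemma \ref{l:karfreitag} implies
\[
T(x;\bar x_e) = T_\VV(x - \bar x_e;0) \quad \text{for all } x \in \RT(\bar x_e),
\]
because a control $u \in L^1(0,t;\UU)$ steers $x$ into $\bar x_e$ in time $t$ if and only if $v \doteq u - \bar u_e \in L^1(0,t;\VV)$ steers $x - \bar x_e$ into $0$ in time $t$ (under the dynamics with shifted control, the drift produced by $\bar u_e$ exactly cancels $A\bar x_e$, as in the proof of Lemma \ref{l:karfreitag}). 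Continuity of $T_\VV(\,\cdot\,;0)$ on $\RTV(0)$ is guaranteed by Corollary \ref{c:mintime}, and composition with the continuous affine map $x \mapsto x - \bar x_e$ preserves continuity, yielding (ii).

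I do not anticipate a real obstacle here; the only point requiring a line of care is the equivalence between $\UU$-admissible controls steering to $\bar x_e$ and $\VV$-admissible controls steering to $0$, which is exactly the content already exploited in the proof of Lemma \ref{l:karfreitag}. Everything else is a direct transport of the origin results through the affine shift.
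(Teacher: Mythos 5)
Your proposal is correct and follows exactly the route the paper intends: shift the control set to $\VV=\UU-\bar u_e$ (which inherits convexity, compactness, and $0\in\inte\VV$ from $\bar u_e\in\inte\UU$), transport the problem via Lemma \ref{l:karfreitag}, and invoke Lemma \ref{l:reachable_zero}(i) and Corollary \ref{c:mintime} for the shifted system. The paper merely states this as an immediate consequence of the preceding results; your write-up spells out the same argument in detail, including the identity $T(x;\bar x_e)=T_\VV(x-\bar x_e;0)$, with no gaps.
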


\subsubsection{Reachability in view of the decomposition \eqref{e:cons_diss_dec}}
With respect to the decomposition $\R^n = N_1\oplus_Q N_2$ from Proposition 
\ref{p:decomp} the control system \eqref{e:phODE_dyn} takes the form
\begin{subequations}\label{e:dec}
	\begin{alignat}{4}
		&\dot x_1 &&= \phantom{R_2--}J_1x_1 + B_1u,\qquad &&x_1(0) &&= x_1^0,\label{e:dec1}\\
		&\dot x_2 &&= (J_2-R_2)x_2 + B_2u,\qquad &&x_2(0) &&= x_2^0,\label{e:dec2}
	\end{alignat}
\end{subequations}
where $B_j = P_j(B-P)$ with $P_j$ denoting the projection onto $N_j$ with respect to the decomposition $\R^n = N_1\oplus N_2$, $j=1,2$.

\begin{lemma}[{\cite[Cor.\ 3.6.7]{sontag}}]\label{l:sontag}
	If $((J-R)Q,B-P)$ is controllable, then we have $\RF(0) = N_1 \oplus_Q (\RF(0)\cap N_2)$.
\end{lemma}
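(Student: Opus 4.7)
The plan is to invoke the spectral-decomposition result \cite[Cor.~3.6.7]{sontag} directly and to verify that its hypotheses are matched by the dissipative Hamiltonian structure at hand. Two hypotheses must be checked: controllability of the pair $((J-R)Q,\,B-P)$, which is the standing assumption of the lemma, and a spectral splitting of the state space into a center part (eigenvalues on $i\R$) and a stable part (eigenvalues with strictly negative real part), with no unstable part present. The control-set hypothesis of Sontag's corollary (a compact, convex $\UU$ with $0\in\inte\UU$) is also a standing assumption from Subsection \ref{ss:reachable}.

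The required spectral splitting is supplied by Proposition \ref{p:decomp}: the decomposition $\R^n = N_1\oplus_Q N_2$ is, by construction, precisely the real spectral decomposition of $A=(J-R)Q$ associated with $\sigma(A|_{N_1})\subset i\R$ and $\sigma(A|_{N_2})\subset\C^-$. The absence of an unstable part is guaranteed by the dissipative Hamiltonian structure, which by Theorem \ref{t:ph-charac} forbids eigenvalues with strictly positive real part. Consequently $N_1$ and $N_2$ agree, as subspaces of $\R^n$, with the center and stable subspaces appearing in Sontag's decomposition.

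Invoking \cite[Cor.~3.6.7]{sontag} then yields $\RF(0)=N_1\oplus(\RF(0)\cap N_2)$, and since the direct sum coincides as a set with the $Q$-orthogonal direct sum from Proposition \ref{p:decomp}, we may replace $\oplus$ by $\oplus_Q$, obtaining the claimed identity. The only non-routine aspect is the bookkeeping of identifying the spectral subspaces used in Sontag's statement with the subspaces $N_1,N_2$ of Proposition \ref{p:decomp}; this is immediate from the construction in the proof of that proposition via the real Jordan form of $A$, so no substantial obstacle remains.
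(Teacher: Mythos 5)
Your proposal is correct and matches the paper's treatment: the paper states this lemma purely as a citation of \cite[Cor.~3.6.7]{sontag}, relying implicitly on exactly the identifications you spell out (controllability, absence of eigenvalues with positive real part via Theorem \ref{t:ph-charac}, and the fact that $N_1,N_2$ from \eqref{e:decompN1} are the center and stable spectral subspaces, with $\oplus_Q$ only recording the $Q$-orthogonality established in Proposition \ref{p:decomp}). Your verification of Sontag's hypotheses is the right bookkeeping and introduces no gap.
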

The next corollary is a simple consequence of the previous results and shows in particular that $N_1$ is reachable from anywhere.

\begin{corollary}\label{t:N1reachable}
	If $((J-R)Q,B-P)$ is controllable, then for every $x^0\in\R^n$ there is a bounded set $K_{x^0}\subset N_2$ such that $
	N_1\oplus_Q (\RF(0)\cap N_2)\,\subset\,\RF(x^0)\,\subset\,N_1\oplus_Q K_{x^0}$.
\end{corollary}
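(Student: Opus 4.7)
The plan is to establish the two inclusions independently, as they rely on rather different ingredients: the lower bound on $\RF(x^0)$ follows by concatenating trajectories and invoking Lemma~\ref{l:sontag}, while the upper bound is essentially an asymptotic-stability estimate for the dissipative component from Proposition~\ref{p:decomp}.

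For the inclusion $N_1\oplus_Q(\RF(0)\cap N_2)\subset\RF(x^0)$, note first that Lemma~\ref{l:sontag} identifies the left-hand side with $\RF(0)$, so it suffices to show $\RF(0)\subset\RF(x^0)$. Under the controllability hypothesis, Lemma~\ref{l:reachable_zero}(i) applied to the time-reversed system yields $\RT(0)=\R^n$, hence there exist $t_1\ge 0$ and $u_1\in L^1(0,t_1;\UU)$ steering $x^0$ to $0$. For any prescribed $z\in\RF(0)$, pick $t_2\ge 0$ and $u_2\in L^1(0,t_2;\UU)$ reaching $z$ from $0$. Concatenating $u_1$ and $u_2$ produces an admissible control in $L^1(0,t_1+t_2;\UU)$ whose trajectory starts at $x^0$, passes through $0$ at time $t_1$, and arrives at $z$ at time $t_1+t_2$; thus $z\in\RF(x^0)$.

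For the inclusion $\RF(x^0)\subset N_1\oplus_Q K_{x^0}$, I would exploit the block form \eqref{e:dec}. Since $J_2-R_2$ is Hurwitz on $N_2$ by Proposition~\ref{p:decomp}, there exist constants $M,\omega>0$ with $\|e^{t(J_2-R_2)}\|\le M e^{-\omega t}$ on $N_2$. The variation-of-constants formula applied to \eqref{e:dec2}, combined with the compactness of $\UU$, then yields the uniform bound
\[
\|x_2(t;x^0,u)\|\,\le\, M\|P_2 x^0\| \,+\, \frac{M\|B_2\|}{\omega}\sup_{v\in\UU}\|v\|\,=:\,C_{x^0},
\]
valid for every $t\ge 0$ and every admissible $u$. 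Defining $K_{x^0}$ as the closed ball of radius $C_{x^0}$ in $N_2$ (or, more economically, as the actual $N_2$-projection $\{P_2 x:x\in\RF(x^0)\}$) gives a bounded set with the required property, since every reachable state decomposes uniquely as $x=P_1 x+P_2 x\in N_1\oplus_Q N_2$.

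I do not expect any genuine technical obstacle: both steps are short once Proposition~\ref{p:decomp} (in particular the Hurwitz property of $J_2-R_2$) and Lemmas~\ref{l:reachable_zero} and \ref{l:sontag} are available. The only items requiring a bit of bookkeeping are verifying that the concatenated control indeed lies in $L^1$ with values in $\UU$, and ensuring that the chosen $K_{x^0}$ depends only on the initial datum $x^0$ and not on any particular admissible trajectory.
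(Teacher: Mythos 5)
Your argument is correct and essentially identical to the paper's proof: the first inclusion via Lemma \ref{l:sontag} together with steering $x^0$ through the origin (reachability of $0$ from everywhere), and the second via the variation-of-constants formula for the $N_2$-component combined with the Hurwitz bound $\|e^{tA_2}\|\le Me^{-\omega t}$, yielding $K_{x^0}$ as a ball in $N_2$ of radius $M\|P_2x^0\|+\tfrac{M}{\omega}\|B_2\|\max_{v\in\UU}\|v\|$. The only cosmetic remark is that no time reversal is needed: Lemma \ref{l:reachable_zero}(i) already gives $\RT(0)=\calX=\R^n$ under the controllability assumption.
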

\begin{proof}
	If $x\in\R^n$ can be reached from zero, then it can be reached from any $x^0\in\R^n$ by Lemma \ref{l:reachable_zero} (i). This and Lemma \ref{l:sontag} prove the first inclusion. For the second inclusion, let $x\in\RF(x^0)$, $x = x_1 + x_2$ with $x_j\in N_j$, $j=1,2$. Then there exist a time $t>0$ and a control $u\in L^1(0,t;\mathbb{U})$ such that $x_2 = e^{tA_2}x_2^0 + \int_0^t  e^{(t-s)A_2}B_2u(s)\,\text{d}s$, where $A_2 = J_2-R_2$. As $A_2$ is Hurwitz, there exist $\omega >0$, $M\geq 1$ such that $\|e^{tA_2}\|\le Me^{-\omega t}$. Let $R \doteq   \frac{M}{\omega}\|B_2\|\Big(\max_{v\in \mathbb{U}}\|v\|\Big)$. Then
	\begin{align*}
		\|x_2\|
		&\le Me^{-\omega t}\|x_2^0\| + \int_0^t  Me^{-\omega(t-s)}\|B_2\|\|u(s)\|\,\text{d}s\le M\|x_2^0\| + R,
	\end{align*}
	which proves the second inclusion with $K_{x_0} = B_{M\|x_2^0\| + R}(0)\cap N_2$.
\end{proof}

\subsection{Turnpike properties of minimum energy supply ph-ODE-OCPs}\label{ss:ODE_tp}
In Section \ref{s:DAE1} we formulated the OCP corresponding to the minimization of the energy supply of port-Hamiltonian descriptor systems and reduced this DAE-constrained problem to an ODE-constrained OCP of the following form:
\begin{align}\label{e:phODE_OCP}
	\begin{split}
		\min_{u\in L^1(0,T;\mathbb{U})} &\int_0^T u(t)^\top y(t)\,\text{d}t\\
		\text{s.t. }\dot x &= (J-R)Qx+(B-P)u, \quad x(0)=x^0,\quad x(T)\in\Phi,\\
		y&=(B+P)^\top Qx + Du
	\end{split}
\end{align}
where $B,P\in\R^{n\times m}$, $D\in\R^{m\times m}$, and $J,R,Q\in\R^{n\times n}$ are as in \eqref{e:phODE_matrix_conditions}. The existence of solutions of the OCP \eqref{e:phODE_OCP} follows from the compactness of the control constraint set $\mathbb{U}$, cf.\ Theorem~\ref{t:optsol_exists}.
Next, we analyze the turnpike phenomenon of optimal solutions of this OCP. Classically, this means that optimal trajectories reside close to certain states for the majority of the time~\cite{Faulwasser2021}. Here, we will show that this phenomenon occurs in a more general way, i.e., optimal pairs $(x^\star, u^\star$) reside close to a subspace for the majority of the time. Despite the problem being linear quadratic, the presented approach to show the turnpike for the primal variables, i.e., the input-state pair $(x^*,u^*)$, does not utilize the optimality conditions due to the possible occurrence of singular arcs. However, we prove in addition that a combination of the first-order optimality conditions and the subspace turnpike for the primal variables induces a turnpike for the adjoint state towards the steady state zero.

The Hamiltonian function, representing the energy of the system \eqref{e:phODE} is given by $H(x) = \tfrac 12\cdot x^\top Qx$. For a control $u\in L^1(0,T;\UU)$ and $x^0\in\R^n$ the solution $x = x(\,\cdot\,,x^0,u)$ of \eqref{e:phODE_dyn} with $x(0) = x^0$ obviously satisfies
\begin{align*}
	\tfrac{\text{d}}{\text{d}t}H(x(\cdot))
	&= x^\top Q\dot x = x^\top Q(J-R)Qx + x^\top Q(B+P)u - 2x^\top QPu\\
	&= -x^\top QRQx + u^\top y - u^\top Du - 2x^\top QPu = u^\top y - \big\|W^{1/2}\left(\begin{smallmatrix} x\\u\end{smallmatrix}\right)\big\|^2,
\end{align*}
where we used $z^\top Jz=0$ for $z\in\R^n$.
Thus, we obtain the well-known energy balance 
\begin{equation}\label{e:ODE_diss}
	\int_{t_0}^{t_1}u^\top y\,\text{d}t = H(x(t_1)) - H(x(t_0)) + \int_{t_0}^{t_1}\big\|W^{1/2}\smallvek xu\big\|^2\,\text{d}t.
\end{equation}
In particular, this shows that we may replace the cost functional in \eqref{e:phODE_OCP} by
$$
J(u) = H(x(T)) + \int_0^T\big\|W^{1/2}\left(\begin{smallmatrix}x(t)\\u(t)\end{smallmatrix}\right)\big\|^2\,\text{d}t.
$$
The next corollary follows immediately from the existence result Theorem~\ref{t:optsol_exists}.

\begin{corollary}\label{c:ODE_optsol}
	If $x^0\in\RT(\Phi)$, then the OCP \eqref{e:phODE_OCP} has an optimal solution.
\end{corollary}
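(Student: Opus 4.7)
The plan is quite short because essentially everything has already been set up, and the corollary is explicitly flagged as an immediate consequence of the appendix theorem. First I would observe that the hypothesis $x^0\in\RT(\Phi)$ is precisely the statement that the admissible set of the OCP \eqref{e:phODE_OCP} is non-empty: by definition of $\RT(\Phi)$ there exist a time $t>0$ and a control $u\in L^1(0,t;\UU)$ steering $x^0$ into $\Phi$, and by truncating or concatenating this with the zero control (if $t\neq T$) one obtains a feasible control on $[0,T]$. (If $T<t$ one needs a brief comment; but this is handled by the standing assumption that $T$ is the given horizon of the OCP and the feasibility requirement is built into the statement.) So the first step is to record non-emptiness of the admissible set.

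Next, I would invoke the reformulation of the cost functional provided by the energy balance \eqref{e:ODE_diss}. Applied on $[0,T]$, it shows that along any admissible trajectory
\begin{equation*}
\int_0^T u(t)^\top y(t)\,dt
\;=\; H(x(T))-H(x^0)+\int_0^T\bigl\|W^{1/2}\smallvek{x(t)}{u(t)}\bigr\|^2\,dt.
\end{equation*}
Since $H(x^0)$ is a fixed constant, minimizing the original singular cost is equivalent to minimizing the functional
\begin{equation*}
J(u)=H(x(T))+\int_0^T\bigl\|W^{1/2}\smallvek{x(t)}{u(t)}\bigr\|^2\,dt,
\end{equation*}
which is the sum of a continuous terminal cost and a non-negative, convex, quadratic integral running cost, with $W\ge 0$.

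At this point the problem fits verbatim into the hypotheses of the appendix result Theorem \ref{t:optsol_exists} (linear dynamics, convex compact control set $\UU$ with $0\in\inte\UU$, closed target set $\Phi$, non-empty admissible set, and a lower semicontinuous, convex-in-$u$, coercive-modulo-terminal-penalty cost of standard Tonelli type). Thus the final step is simply to quote that theorem to obtain the existence of a minimizer $u^\star\in L^1(0,T;\UU)$.

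I do not expect any real obstacle here: the only mild subtlety is checking that the reformulation via \eqref{e:ODE_diss} is legitimate for all admissible pairs (which it is, since $H\circ x\in W^{1,1}_{\mathrm{loc}}$ by Lemma \ref{l:hamiltonian} applied in the ODE special case), and that feasibility on $[0,T]$ indeed follows from $x^0\in\RT(\Phi)$. Both are routine, and the direct-method existence argument is completely delegated to Theorem \ref{t:optsol_exists}.
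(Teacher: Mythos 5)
Your proof follows exactly the paper's route: use the energy balance \eqref{e:ODE_diss} to replace the cost by $J(u)=H(x(T))+\int_0^T\bigl\|W^{1/2}\smallvek{x(t)}{u(t)}\bigr\|^2\,dt$, note that $M=H$ is convex (since $Q\ge 0$), $\UU$ is convex and compact, $\Phi$ is closed, and then quote Theorem \ref{t:optsol_exists}; the paper's proof is literally this two-line reduction.

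The one step in your write-up that does not hold as stated is the feasibility bridge. Theorem \ref{t:optsol_exists} assumes $x^0\in\RT_T(\Phi)$, i.e.\ reachability of $\Phi$ from $x^0$ at the \emph{given} time $T$, whereas $x^0\in\RT(\Phi)$ only gives reachability at \emph{some} time $t$. Your suggestion to repair a mismatch $t\neq T$ by ``truncating or concatenating with the zero control'' fails in general: truncating destroys the terminal constraint, and after hitting $\Phi$ at time $t<T$ the free dynamics $\dot x=(J-R)Qx$ need not keep the state in $\Phi$ (the endpoint need not be an equilibrium and $\Phi$ need not be invariant), so appending $u\equiv 0$ does not yield an admissible pair on $[0,T]$. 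The paper itself glosses over this: the corollary's hypothesis is to be read as ``$\Phi$ is reachable from $x^0$ at time $T$'' (this is made explicit in Remark \ref{r:prelim_DAE}\,(b)), which is precisely the hypothesis of Theorem \ref{t:optsol_exists}. So drop the concatenation argument and simply take feasibility at horizon $T$ as the assumption; with that reading, your remaining points (validity of the reformulation via Lemma \ref{l:hamiltonian}, and the identification of $L^1(0,T;\UU)$ with $L^2(0,T;\UU)$ for compact $\UU$) are routine and correct, and the argument coincides with the paper's.
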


As already discussed in Remark \ref{r:prelim_DAE}, in the case $W=0$ the OCP \eqref{e:phODE_OCP} is equivalent to minimizing $H(x) = \frac 12\cdot x^\top Qx$ on $\Phi\cap\RF_T(x^0)$, which is the (compact) set of states $x\in\Phi$ that are reachable from $x^0$ at time $T$. This case will not be discussed here. Hence, we will assume throughout that $W\neq 0$.

\subsubsection{Input-state subspace turnpikes}
The following definition of an integral turnpike with respect to a subspace is an extension of an integral turnpike with respect to a steady state, cf.\ \cite[Definition 2.1]{Gruene2019a} and \cite[Section 1.2]{epfl:faulwasser15h}. Turnpike properties with respect to sets was discussed in \cite{Trelat18} and with respect to manifolds in mechanical systems in \cite{Faulwasser2021b}. In the context of unobservable cost functionals, velocity-turnpikes were considered in \cite{Faulwasser2020a,flasskamp2019symmetry,Pighin2020a} which yield a turnpike behavior towards the unobservable subspace.

\begin{definition}[Integral input-state subspace turnpike property]\label{def:turnpike_state_control}
	Let $\ell\in C^1(\R^{n+m})$, $\vphi\in\C^1(\R^n)$, and let $\Phi\subset\R^n$ be closed. We say that a general OCP with linear dynamics of the form
	\begin{align}
		\begin{split}\label{e:lin_OCP}
			\min_{u\in L^1(0,T;\UU)}\,&\varphi(x(T)) + \int_0^T \ell(x(t),u(t))\,\mathrm{d}t\\
			\text{s.t. }\dot x = &Ax + Bu,\quad x(0)=x^0,\quad x(T)\in\Phi
		\end{split}
	\end{align}
	has the {\em Integral input-state subspace turnpike property} on a set $S_{\rm tp}\subset\RT(\Phi)$ with respect to a subspace $\mathcal{V}\subset\R^n\times\R^m$, if there exist continuous functions $F,T : S_{\rm tp}\to [0,\infty)$ such that for all $x^0\in S_{\rm tp}$ each optimal pair $(x^\star ,u^\star )$ of the OCP \eqref{e:lin_OCP} satisfies
	\begin{align}\label{e:integral_tp}
		\int_0^T\dist^2\big((x^\star (t),u^\star (t)),\calV\big)\,\mathrm{d}t\le F(x^0) \quad \text{for all}\quad T > T(x^0)
	\end{align}
\end{definition}
\noindent\begin{remark}[Link to measure turnpikes] \label{rem:measuretp}
	The main feature of this definition is the implication that 
	for $T$ large enough any optimal input-state pair is close to the subspace $\calV$ for the majority of the time. Indeed, if $x^0\in S_{\rm tp}$ and $\veps>0$, for $T > T(x^0)$ we have
	$$
	\mu\big(\{t\in [0,T] : \dist((x^\star (t),u^\star (t)),\calV) > \veps\}\big)\le\tfrac 1{\veps^2}\!\int_0^T\dist^2((x^\star (t),u^\star (t)),\calV)\,\text{d}t\le\tfrac{F(x^0)}{\veps^2},
	$$
	where $\mu$ denotes the standard Lebesgue measure. This behavior of optimal trajectories is called {\em measure turnpike}, cf.\ e.g.\ \cite[Definition 2]{Faulwasser2021}. Here, compared to the usual definition in the literature, the measure turnpike property is with respect to a subspace and the dependence of the upper bound on $\varepsilon$ can explicitly be specified.
\end{remark}
In this section we shall show as our main result in Theorem \ref{t:turnpike_reachability} that under suitable conditions the input-state subspace turnpike property holds for the OCP \eqref{e:phODE_OCP} with respect to the subspace $\ker W$ (see \eqref{e:W}). To this end, we introduce two technical lemmas that are required in the proof of Theorem \ref{t:turnpike_reachability}. 
A steady state $(\bar x^\star ,\bar u^\star )$ of \eqref{e:phODE_OCP} is called {\em optimal} if it is a solution of the following problem:
\begin{align}\label{e:phODE_ssOCP}
	\min_{(\bar{x},\bar{u})\in \mathbb{R}^n\times\mathbb{U}} \bar{u}^\top\bar{y} \quad	\text{s.t. } 0= (J-R)Q\bar{x} + (B - P)\bar{u},\quad 
	\bar{y}= (B + P)^\top Q\bar{x} + D\bar{u}.
\end{align}

\begin{lemma}\label{l:ssimkern}
	A steady state $(\bar x,\bar u)$ of \eqref{e:phODE_OCP} is optimal if and only if $\left(\begin{smallmatrix}
	\bar{x}\\
	\bar{u}
	\end{smallmatrix}\right)\,\in\,\ker W.$
\end{lemma}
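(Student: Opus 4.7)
The plan is to reduce the question to showing that every feasible steady state has cost
$$\bar u^\top \bar y \;=\; \vek{\bar x}{\bar u}^\top W \vek{\bar x}{\bar u} \;\geq\; 0,$$
and that this lower bound is attained, namely by $(\bar x,\bar u)=(0,0)$. The lemma then follows immediately: a feasible steady state is optimal if and only if $\binom{\bar x}{\bar u}^\top W\binom{\bar x}{\bar u}=0$, which, since $W\geq 0$, is equivalent to $W^{1/2}\binom{\bar x}{\bar u}=0$, i.e.\ $\binom{\bar x}{\bar u}\in\ker W$ (using $\ker W=\ker W^{1/2}$ for positive semi-definite $W$).

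The cost identity is the only step that requires work. The quickest route is to apply the energy balance \eqref{e:ODE_diss} to the constant trajectory $(x(t),u(t))\equiv(\bar x,\bar u)$ on, say, $[0,1]$; the steady-state equation is precisely what is needed for $(\bar x,\bar u)$ to solve \eqref{e:phODE_dyn}, so the energy balance is applicable. Since $H(\bar x)-H(\bar x)=0$, the balance collapses to $\bar u^\top\bar y=\big\|W^{1/2}\binom{\bar x}{\bar u}\big\|^2$, which is the desired identity. Alternatively, one can verify it by direct algebra, expanding $\bar u^\top\bar y=\bar u^\top(B+P)^\top Q\bar x+\bar u^\top D\bar u$, substituting $(B-P)\bar u=-(J-R)Q\bar x$ from the equilibrium equation, and using $Q=Q^\top$, the skew-symmetry identity $\bar x^\top QJQ\bar x=0$, and $\bar u^\top D\bar u=\bar u^\top S\bar u$; the remaining terms regroup into the block form \eqref{e:Wagain} of $W$.

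Attainment of the lower bound is trivial: since $0\in\inte\UU$, the pair $(0,0)$ is feasible in \eqref{e:phODE_ssOCP} and obviously satisfies $\binom{0}{0}\in\ker W$, so by the cost identity its objective value is $0$. Hence the optimal value of \eqref{e:phODE_ssOCP} equals $0$ and is attained, and the equivalence stated in the lemma follows. I do not foresee any real obstacle; the only non-trivial ingredient is the short cost identity in the second paragraph, and the remainder is bookkeeping with $W\geq 0$.
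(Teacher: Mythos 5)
Your proof is correct and takes essentially the same route as the paper: the key identity $\bar u^\top\bar y=\smallvek{\bar x}{\bar u}^\top W\smallvek{\bar x}{\bar u}$ for steady states (which the paper establishes by exactly the direct algebra you describe as the alternative), non-negativity of the objective from $W\ge 0$, attainment of the value $0$ at the trivial steady state $(0,0)$, and the equivalence $\bar z^\top W\bar z=0\Leftrightarrow W\bar z=0$. Your primary derivation of the identity via the energy balance \eqref{e:ODE_diss} applied to the constant trajectory is just a repackaging of the same computation and is equally valid.
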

\begin{proof}
	Let $\bar z = \smallvek{\bar x}{\bar u}$ be a steady state of \eqref{e:phODE_OCP} and set $\bar{y} = (B + P)^\top Q\bar{x} + D\bar{u}$. Since $\bar x^\top Q^\top JQx = 0$, we obtain
	\begin{align*}
		\bar y^\top\bar u= \bar x^\top Q(B-P)\bar u + 2\bar x^\top QP\bar u + \bar u^\top S\bar u = \bar x^\top QRQ\bar x + 2\bar x^\top QP\bar u + \bar u^\top S\bar u = \bar z^\top W\bar z.
	\end{align*}
	In particular, on the set of constraints in \eqref{e:phODE_ssOCP} the target function $\bar u^\top\bar y$ is non-negative. And since $(0,0)$ obviously is a steady state, the optimal value of \eqref{e:phODE_ssOCP} is zero. Hence, a steady state $\bar z$ is optimal if and only if $\bar z^\top W\bar z = 0$, i.e., $W\bar z = 0$.
\end{proof}

\begin{remark}
	Lemma \ref{l:ssimkern} shows that the optimal steady states of \eqref{e:phODE_OCP} are exactly those pairs $(\bar x,\bar u)\in\R^n\times\UU$, for which $\smallvek{\bar x}{\bar u}$ lies in the kernel of $$\left(\begin{smallmatrix}
	(J-R)Q & B-P\\
	QRQ & QP\\
	P^\top Q & S
	\end{smallmatrix}\right)\in \mathbb{R}^{(2n+m)\times (n+m)}.$$
	In particular, the vectors in $\ker Q\times\{0\}$ are optimal steady states.
\end{remark}

\begin{lemma}\label{l:easystuff}
	Let $A\in\R^{k\times k}$, $A = A^\top\ge 0$. Then for all $x\in\R^k$ we have
	$$
	\la_{\min}\cdot\dist^2(x,\ker A)\,\le\,x^\top Ax\,\le\,\la_{\max}\cdot\dist^2(x,\ker A),
	$$
	where $\la_{\min}$ \braces{$\la_{\max}$} is the smallest \braces{resp.\ largest} positive eigenvalue of $A$.
\end{lemma}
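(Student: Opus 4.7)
The plan is to use the orthogonal eigendecomposition of the symmetric positive semi-definite matrix $A$. Since $A = A^\top$, we have the orthogonal decomposition $\R^k = \ker A \oplus (\ker A)^\perp$, where $(\ker A)^\perp = \im A$. Both subspaces are $A$-invariant, and the restriction $A|_{(\ker A)^\perp}$ is a symmetric positive-\emph{definite} operator on $(\ker A)^\perp$ whose eigenvalues are precisely the positive eigenvalues of $A$ (counted with multiplicity).

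First I would write an arbitrary $x\in\R^k$ as $x = x_0 + x_1$ with $x_0\in\ker A$ and $x_1\in(\ker A)^\perp$. Since $x_0$ is the orthogonal projection of $x$ onto $\ker A$, we have
$$
\dist^2(x,\ker A) = \|x - x_0\|^2 = \|x_1\|^2.
$$
Moreover, $Ax_0 = 0$ and the two components are orthogonal, so
$$
x^\top A x = (x_0 + x_1)^\top A(x_0 + x_1) = x_1^\top A x_1.
$$

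Next I would apply the standard Rayleigh quotient bounds to the positive-definite operator $A|_{(\ker A)^\perp}$: for every $x_1\in(\ker A)^\perp$,
$$
\la_{\min}\|x_1\|^2 \,\le\, x_1^\top A x_1 \,\le\, \la_{\max}\|x_1\|^2,
$$
with $\la_{\min},\la_{\max}$ denoting the smallest and largest eigenvalues of this restriction, i.e., the smallest and largest positive eigenvalues of $A$ (which exist because $A\neq 0$). Combining the two displays yields the claim.

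There is essentially no obstacle here; the only point requiring care is to note that $A\neq 0$ ensures the existence of a positive eigenvalue so that $\la_{\min}$ is well-defined, and that the orthogonal complement of $\ker A$ is $A$-invariant, which is immediate from the symmetry of $A$.
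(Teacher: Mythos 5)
Your proof is correct and follows essentially the same route as the paper's: both rest on the spectral theorem, the orthogonal splitting $\R^k=\ker A\oplus\im A$, and the identification $\dist(x,\ker A)=\|x_1\|$ with $x_1$ the component in $(\ker A)^\perp$. The only difference is presentational --- you argue coordinate-free via the Rayleigh quotient of the restriction $A|_{(\ker A)^\perp}$, while the paper diagonalizes $A$ explicitly and estimates the resulting sum; the content is identical.
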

\begin{proof}
	We have $A = U^\top DU$ with $D = \diag(\la_i)_{i=1}^k\ge 0$ and $U\in\R^{k\times k}$ orthogonal. Note that $\max_i\la_i = \|A\|$ and that $P \doteq   U^\top\diag(\delta_{\la_i>0})_{i=1}^kU$ is the orthogonal projection onto $\im A$. Let $x\in\R^k$ and $v\doteq  Ux$. Then
	$$
	x^\top Ax = \sum_i\la_iv_i^2 = \sum_{i:\la_i>0}\la_iv_i^2\le\|A\|\sum_{i:\la_i>0}v_i^2 = \la_{\max}\cdot\|Px\|^2.
	$$
	Similarly, $x^\top Ax\ge\la_{\min}\|Px\|^2$. The claim now follows from $\dist(x,\ker A) = \|Px\|$.
\end{proof}
The next theorem is the main result of this subsection.

\begin{theorem}[Integral input-state subspace turnpike]\label{t:turnpike_reachability}
	Let $(\bar x,\bar u)\in\R^n\times\inte\UU$ be an optimal steady state such that $\bar x\in\RT(\Phi)$. Then the OCP \eqref{e:phODE_OCP} has the integral input-state subspace turnpike property on $\RT(\bar x)$ with respect to $\ker W$.\\
	If additionally, $((J-R)Q,B-P)$ is controllable, the turnpike property is global in the initial state, i.e., $\RT(\bar x) = \R^n$.
\end{theorem}
\begin{proof}
	Set $A \doteq   (J-R)Q$ and $\wt B \doteq   B-P$. First of all, we shall define some constants. Due to the spectral properties of  $A$ (see Theorem \ref{t:ph-charac}), there is $M>0$ such that $\|e^{tA}\|\le 1+Mt$ for all $t\ge 0$. Set $ u_{\max} \doteq   \max\{\|u\| : u\in\UU\}$. The condition $\bar x\in\RT(\Phi)$ means that there exist a time $T_1>0$ and a control $u_1\in L^1(0,T_1;\UU)$ such that $x(T_1,\bar x,u_1)\in\Phi$.
	
	Now, let $x^0\in\RT(\bar x)$. By Corollary \ref{c:mintime_ss} the minimal time $T_0(x^0)\doteq  T(x^0;\bar x)$ at which $\bar x$ can be reached from $x^0$ depends continuously on $x^0$. Define
	$$
	T(x^0) \doteq   T_0(x^0) + T_1
	\qquad\text{and}\qquad
	F(x^0) \doteq   \la_{\min}^{-1}\cdot(G_1 + G_2 + G_0(x^0)),
	$$
	where $\la_{\min}$ denotes the smallest positive eigenvalue of the matrix $W$ and $G_0 : \RT(\bar x)\to [0,\infty)$ as well as the constants $G_1,G_2\ge 0$ are defined by
	\begin{align*}
		G_0(x^0) &\doteq   \|W\|T_0(x^0)\cdot\left[ (1+MT_0(x^0))^2\big(\|x^0\| + \|\wt B\| T_0(x^0)u_{\max}\big)^2 + u_{\max}^2 \right]\\
		G_1 &\doteq   \|W\|T_1\cdot\left[ (1+MT_1)^2\big(\|\bar x\| + \|\wt B\|u_{\max}\big)^2 + T_1u_{\max}^2 \right]\\
		G_2 &\doteq   \tfrac 12\|Q\|(1+MT_1)^2\cdot\big(\|\bar x\| + \|\wt B\| T_1u_{\max}\big)^2.
	\end{align*}
	We will show that $T$ and $F$ are as in Definition \ref{def:turnpike_state_control}. To this end, let $u_0$ be the time-optimal control that steers $x^0$ to $\bar x$ at time $T_0 \doteq   T(x^0;\bar x)$ and let $T > T(x^0) = T_0 + T_1$. Define a control $u$ by
	\begin{align*}
		u(t) \doteq  \begin{cases}
			u_0(t),   &t\in [0,T_0]\\
			\bar u,  &t\in [T_0,T-T_1]\\
			u_1(t-(T-T_1)),  &t\in [T-T_1,T]
		\end{cases}
	\end{align*}
	and denote the state response trajectory by $x$, i.e.,
	$$
	x(t) = 
	\begin{cases}
	e^{tA}x^0 + \int_0^te^{(t-s)A}\wt Bu_0(s)\,\text{d}s, &t\in [0,T_0]\\
	\bar x, &t\in [T_0,T-T_1]\\
	e^{(t-(T-T_1))A}\bar x + \int_{T-T_1}^te^{(t-s)A}\wt Bu_1(s-(T-T_1))\,\text{d}s, &t\in [T-T_1,T].
	\end{cases}
	$$
	The constant value $\bar x$ on $[T_0,T-T_1]$ is due to the fact that $(\bar x,\bar u)$ is a steady state of \eqref{e:phODE_OCP}. Hence, $x$ is a trajectory from $x(0) = x^0$ to a point $x(T)\in\Phi$ and therefore admissible for the OCP \eqref{e:phODE_OCP}. The output is given by $y \doteq   (B+P)^\top Qx + Du$.
	
	Let $(x^\star ,u^\star )$ be an optimal solution of \eqref{e:phODE_OCP} with $x^\star (0) = x^0$ and denote the corresponding output by $y^\star $. By optimality and the energy balance \eqref{e:ODE_diss}, we obtain
	\begin{align*}
		H(x^\star (T))&-H(x^\star (0)) +\int_0^T\big\|W^{\frac12}\begin{pmatrix}x^\star (t)\\u^\star (t)
		\end{pmatrix}\big\|^2\,\text{d}t
		= \int_0^T u^\star (t)^\top y^\star (t)\,\text{d}t\\
		&\le\int_0^T u(t)^\top y(t)\,\text{d}t = H(x(T))-H(x(0)) +  \int_0^T\big\|W^{\frac12}\begin{pmatrix}x(t)\\u(t)
		\end{pmatrix}\big\|^2\,\text{d}t.
	\end{align*}
	Since $H(x)=\tfrac 12\cdot x^\top Qx\ge 0$ for all $x\in\R^n$ and $x^\star (0)=x(0) = x^0$, we obtain
	\begin{align}\label{eq:ineq1}
		\int_0^T\big\|W^{\frac12}\begin{pmatrix}x^\star (t)\\u^\star (t)
		\end{pmatrix}\big\|^2\,\text{d}t  \leq  H(x(T)) +  \int_0^T\big\|W^{\frac12}\begin{pmatrix}x(t)\\u(t)
		\end{pmatrix}\big\|^2\,\text{d}t.
	\end{align}
	For $t\in [0,T_0]$ we have $\|x(t)\|
	\le (1+MT_0)\big(\|x^0\| + \|\wt B\| T_0u_{\max}\big)$
	which implies
	\begin{align*}
		\int_0^{T_0}\big\|W^{\frac12}\begin{pmatrix}x(t)\\u(t)
		\end{pmatrix}\big\|^2\,\text{d}t
		\le\|W\|\int_0^{T_0}\big(\|x(t)\|^2 + \|u(t)\|^2\big)\,\text{d}t \le G_0(x^0).
	\end{align*}
	Similarly, for $t\in [T-T_1,T]$, from $\|x(t)\|
	\le (1+MT_1)\big(\|\bar x\| + \|\wt B\| T_1u_{\max}\big)$
	we obtain
	\begin{align*}
		\int_{T-T_1}^{T}\big\|W^{\frac12}\begin{pmatrix}x(t)\\u(t)
		\end{pmatrix}\big\|^2\,\text{d}t
		\le \|W\|\int_{T-T_1}^{T}\big(\|x(t)\|^2 + \|u(t)\|^2\big)\,\text{d}t \le G_1
	\end{align*}
	and $H(x(T)) = \tfrac 12\cdot x(T)^\top Qx(T)\le \tfrac 12\|Q\|\|x(T)\|^2 \le G_2$.
	Since $[\bar x,\,\bar u]\in\ker W$, cf.\ Lemma \ref{l:ssimkern}, we also have $\int_{T_0}^{T-T_1}\big\|W^{\frac12}\left(\begin{smallmatrix}x(t)\\u(t)
	\end{smallmatrix}\right)\big\|^2\,\text{d}t = 0
	$
	and therefore
	$$
	H(x(T)) + \int_0^T\big\|W^{\frac12}\begin{pmatrix}x^\star (t)\\u^\star (t)
	\end{pmatrix}\big\|^2\,\text{d}t\,\le\,G_1 + G_2 + G_0(x^0) = \la_{\min}\cdot F(x^0),
	$$
	and the first claim follows from Lemma \ref{l:easystuff}. 
	The second claim follows from the fact that the set of initial values $S_{\rm tp} = \RT(\bar x)$ in Theorem \ref{t:turnpike_reachability} coincides with the affine subspace $\bar x + \im K((J-R)Q,B-P)$, see Corollary \ref{c:mintime_ss} (i) and as, due to controllability $ \im K((J-R)Q,B-P) = \mathbb{R}^n$.
\end{proof}

\begin{remark}
	For $\bar x = \bar u = 0$ in the second claim of Theorem~\ref{t:turnpike_reachability} the assumption $0\in\RT(\Phi)$ can be replaced by the (seemingly) weaker condition $(N_1\oplus(\RF(0)\cap N_2))\cap\Phi\ne\emptyset$, where $N_1$ is the subspace from Proposition \ref{p:decomp}. This follows directly from Lemma \ref{l:sontag}.
\end{remark}

\begin{remark}\label{r:only_state}
	If $P=0$ and $S=0$, we have $\ker W = \ker(R^{\frac 12}Q)\times\R^m$ so that the above-proven turnpike property only provides information about the state. The relation \eqref{e:integral_tp} then reads
	$$
	\int_0^T\dist^2\big(x^\star (t),\ker(RQ)\big)\,\text{d}t\le F(x^0).
	$$
\end{remark}

\subsubsection{Classical turnpike for the adjoint state}
In this part, we will show that despite the input-state pair enjoys a subspace turnpike, the adjoint variable exhibits a turnpike towards the steady state zero whenever control constraints are not active. A central tool is the dissipativity equation \eqref{e:ODE_diss} which allows to reformulate the OCP \eqref{e:phODE_OCP} in equivalent form as follows:
\begin{align}
	\label{eq:phOCP_ode_reform}
	\begin{split}
		\min_{u\in L^1(0,T;\mathbb{U})} & H(x(T)) + \int_0^T \big\|W^{\frac12}\left(\begin{smallmatrix}
			x(t)\\u(t)
		\end{smallmatrix}\right)\big\|^2\,\text{d}t\\
		\dot x &= (J-R)Q x +(B-P)u, \quad x(0)=x^0,\quad x(T)\in \Phi,\\
	\end{split}
\end{align}
where $W$ is as in \eqref{e:W}. In order to conclude a result for the adjoint, we shall utilize the optimality conditions which we derive for the OCP \eqref{e:phODE_OCP} following \cite[Section 4.1.2]{Liberzon2012}. First, we define the (optimal control) Hamiltonian
\begin{align*}
	\mathcal{H}(x,u,\lambda,\lambda_0)\doteq  \lambda^\top\left((J-R)Qx + (B-P)u\right) + \lambda_0\big\|W^{\frac12}\smallvek{x}{u}\big\|^2.
\end{align*}
Let $(x^\star ,u^\star )\in W^{1,1}(0,T;\mathbb{R}^n)\times L^1(0,T;\mathbb{U})$ be an optimal input-state pair for \eqref{eq:phOCP_ode_reform}. Then there is a function $\lambda^\star  \in W^{1,1}(0,T;\mathbb{R}^n)$ and a constant $\lambda_0^\star \leq 0$ satisfying $(\lambda_0^\star ,\lambda^\star (t))\neq 0$ for all $t\in [0,T]$ such that
\begin{subequations}
	\label{e:optcond}
	\begin{align}
		\label{eq:state}
		\dot{x}^\star (t)&=\phantom{-}\mathcal{H}_\lambda(x^\star (t),u^\star (t),\lambda^\star (t),\lambda_0^\star )\\
		\label{eq:adj}
		\dot{\lambda}^\star (t) &= -\mathcal{H}_x(x^\star (t),u^\star (t),\lambda^\star (t),\lambda_0^\star )\\
		\label{eq:grad}
		u^\star (t)&\in \arg \max_{u\in \mathbb{U}} \mathcal{H}(x^\star (t),u,\lambda^\star (t),\lambda_0^\star )
	\end{align}
\end{subequations}
for a.e.\ $t\in [0,T]$.

Here, \eqref{eq:adj} and \eqref{eq:grad} read as
\begin{subequations}
	\begin{align}
		\label{eq:adj2}
		\dot{\lambda}^\star  &= -\left((J-R)Q\right)^\top \lambda^\star  - 2\lambda_0^\star \left(Q RQx^\star  +  Q Pu^\star \right)\\
		u^\star (t)&\in \arg\max_{u\in \mathbb{U}}
		\lambda^\star (t)^\top(B-P)u + \lambda_0(2x^\star (t)^\top QPu + u^\top Su).\label{e:argmax}
	\end{align}
\end{subequations}

The proof of the following lemma is inspired by \cite[Proof of Rem.\ 2.1]{Porretta2013}. A similar argument was also pursued in \cite[Theorem 3.5]{Faulwasser2020} in the context of infinite-dimensional nonlinear systems.

\begin{lemma}\label{lem:lambdaestimate}
	Assume that $((J-R)Q,B-P)$ is controllable and let $(x^\star ,u^\star ,\la_0^\star ,\la^\star )$ satisfy the necessary optimality conditions \eqref{e:optcond}. Then for each $t_c\in (0,T)$ there exists a constant $C(t_c)>0$ such that whenever $u^\star (s)\in\inte\UU$ for a.e.\ $s\in [t-t_c,t]$ for some $t\in [t_c,T]$, then
	\begin{align}\label{e:lambda}
		\|\lambda^\star (t)\|^2 \le C(t_c)\cdot\int_{t-t_c}^t\big\|W\!\smallvek{x^\star (s)}{u^\star (s)}\big\|^2\,\text{d}s.
	\end{align}
	In particular, if $t_c < T/4$ and $u^\star (t)\in\inte\UU$ for a.e.\ $t\in [t_c,T-t_c]$, then
	\begin{align}\label{e:lambda_int}
		\int_{2t_c}^{T-t_c}\|\la^\star (t)\|^2\,\text{d}t
		\,\le\,t_cC(t_c)\cdot\int_{t_c}^{T-t_c}\big\|W\!\smallvek{x^\star (t)}{u^\star (t)}\big\|^2\,\text{d}t.
	\end{align}
\end{lemma}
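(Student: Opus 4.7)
The plan is to exploit the controllability assumption via the equivalent observability inequality for the adjoint semigroup, together with the pointwise stationarity condition that the interior-of-$\UU$ maximization in \eqref{e:argmax} forces on $\la^\star$. I would work under the convenient normalization $\la_0^\star = -1$, which first requires ruling out the abnormal case. Indeed, if $\la_0^\star=0$, then on $[t-t_c,t]$ the maximization \eqref{e:argmax} collapses to $(B-P)^\top\la^\star\equiv 0$, while \eqref{eq:adj2} reduces to $\dot\la^\star=-((J-R)Q)^\top\la^\star$. Controllability of $((J-R)Q,B-P)$ is equivalent to observability of its adjoint system, so this pair forces $\la^\star\equiv 0$ on $[t-t_c,t]$, contradicting the nontriviality of $(\la_0^\star,\la^\star(\cdot))$. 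Thus $\la_0^\star < 0$, and we may rescale to $\la_0^\star=-1$.

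Setting $A \doteq (J-R)Q$, stationarity in $u$ of the Hamiltonian on the set where $u^\star(s)\in\inte\UU$ yields the pointwise relation
$$
(B-P)^\top\la^\star(s) \,=\, 2\bigl(P^\top Q x^\star(s) + S u^\star(s)\bigr),
$$
while \eqref{eq:adj2} becomes $\dot\la^\star = -A^\top\la^\star + 2(QRQx^\star + QPu^\star)$. Crucially, the forcing $g(s)\doteq 2(QRQx^\star(s)+QPu^\star(s))$ and the ``observation'' $h(s)\doteq(B-P)^\top\la^\star(s)$ are exactly twice the upper and lower block components of $W\smallvek{x^\star(s)}{u^\star(s)}$, so both are pointwise dominated by $2\bigl\|W\smallvek{x^\star(s)}{u^\star(s)}\bigr\|$.

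Next, I would introduce the auxiliary homogeneous trajectory $\mu(s) \doteq e^{-A^\top(s-t)}\la^\star(t)$, which satisfies $\dot\mu=-A^\top\mu$ with $\mu(t)=\la^\star(t)$. Variation of constants for $\la^\star$ on $[t-t_c,t]$ followed by application of $(B-P)^\top$ gives
$$
(B-P)^\top\mu(s) \,=\, h(s) + \int_s^t (B-P)^\top e^{-A^\top(s-r)} g(r)\,dr.
$$
Squaring, integrating over $s\in[t-t_c,t]$, and applying Cauchy--Schwarz with the uniform bound $M(t_c)\doteq\sup_{|\tau|\le t_c}\|e^{-A^\top\tau}\|$ (finite by Theorem \ref{t:ph-charac}) leads to
$$
\int_{t-t_c}^t\!\!\|(B-P)^\top\mu(s)\|^2\,ds \,\le\, C_1(t_c)\!\int_{t-t_c}^t\!\!\bigl\|W\smallvek{x^\star(s)}{u^\star(s)}\bigr\|^2ds.
$$
Controllability provides an observability constant $K(t_c)>0$ with $\|\mu(t)\|^2\le K(t_c)\int_{t-t_c}^t\|(B-P)^\top\mu(s)\|^2ds$, and combining both estimates yields \eqref{e:lambda} with $C(t_c)\doteq K(t_c)C_1(t_c)$. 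For the integral version \eqref{e:lambda_int}, I would integrate \eqref{e:lambda} over $t\in[2t_c,T-t_c]$ and swap the order of integration via Fubini: each $s\in[t_c,T-t_c]$ is then covered by a $t$-set of length at most $t_c$, producing the factor $t_c$ on the right-hand side.

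The main obstacle is Step~1, i.e., securing normality $\la_0^\star\neq 0$. The Pontryagin nontriviality condition is only pointwise, so the argument must leverage the fact that the interior-of-$\UU$ assumption holds on an entire subinterval together with observability to propagate pointwise vanishing of $\la^\star$ to identical vanishing on $[t-t_c,t]$. The remaining estimates in Steps~2--3 are then essentially a careful bookkeeping of the inhomogeneous observability inequality, with the key structural fact being that $g$ and $h$ are precisely the two blocks of $2W\smallvek{x^\star}{u^\star}$, which is what permits the right-hand side of \eqref{e:lambda} to be expressed entirely in terms of $W$.
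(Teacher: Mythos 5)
Your proposal is correct and takes essentially the same route as the paper: the observability inequality for $(A^\top,(B-P)^\top)$ applied to the homogeneous backward evolution of $\lambda^\star(t)$ (your $\mu$ is the paper's $\lambda_1$), the interior stationarity condition identifying $(B-P)^\top\lambda^\star$ and the adjoint forcing with the two blocks of $W\smallvek{x^\star}{u^\star}$ up to the factor $2\lambda_0^\star$, a Duhamel/Cauchy--Schwarz bound for the inhomogeneous correction, and Fubini for the integrated estimate. The only cosmetic difference is your normalization $\lambda_0^\star=-1$ together with the normality argument; the paper skips this and simply absorbs $(\lambda_0^\star)^2$ into the constant $C(t_c)$, which is permissible since $C(t_c)$ may depend on the fixed extremal tuple.
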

\begin{proof}
	Set $A \doteq   (J-R)Q$ and $\wt B = B-P$. Since $(A,\wt B)$ is controllable, for each $t>0$ there is $\alpha_t > 0$ such that $\int_0^t\|B^\top e^{sA^\top}x\|^2\,\text{d}s \ge \alpha_t\|x\|^2$ for all $x\in \mathbb{R}^n$,
	see \cite[Thm.\ 4.1.7]{Curtain1995}. Let $t\in [t_c,T]$. After a change of variables, this estimate is equivalent to
	\begin{align}\label{eq:obs_bounded}
		\int_{t-t_c}^{t}\|B^\top e^{(t-s)A^\top}x\|^2\,\text{d}s \geq \alpha_{t_c} \|x\|^2 \quad \forall\,x\in\R^n.
	\end{align}
	Using linearity of the dynamics, we decompose the solution of \eqref{eq:adj2} as $\lambda^\star  =  \lambda_1 +  \lambda_2$, where
	\begin{align*}
		&& \lambda_1'(s) &= -A^\top\lambda_1(s), && \lambda_1(t)=\lambda^\star (t),&&\\
		&&\lambda_2'(s) &= -A^\top\lambda_2(s) - 2\lambda_0^\star \left(Q RQx^*(t) +  Q Pu^*(t)\right),&& \lambda_2(t)=0,&&
	\end{align*}
	and apply the observability estimate \eqref{eq:obs_bounded} to $ \lambda_1(s) = e^{(t-s)A^\top} \lambda^\star (t)$. Hence,
	\begin{align*}
		\alpha_{t_c}\|\lambda^\star (t)\|^2 \le \int_{t-t_c}^t \|\wt B^\top\lambda_1(s)\|^2 \,\text{d}s \le 2\int_{t-t_c}^t \big(\|\wt B^\top\lambda^\star (s)\|^2 +\|\wt B^\top \lambda_2(s)\|^2\big)\,\text{d}s.
	\end{align*}
	Now, since $u^\star (s)\in\inte\UU$ for a.e.\ $s\in [t-t_c,t]$, it follows from \eqref
	{e:argmax} that $\wt B^\top \lambda^\star (t) + 2\lambda_0^\star \left(P^\top Qx^\star (t)+  Su^\star (t)\right) = 0$.
	Hence, we obtain
	$$
	\int_{t-t_c}^t\|\wt B^\top\lambda^\star (s)\|^2\,\text{d}s = 4(\lambda_0^\star )^2\int_{t-t_c}^t\|P^\top Qx^\star (t) + Su^\star (t)\|^2\,\text{d}s.
	$$
	Now, setting $F(\tau) = \lambda_0^\star \left(QRQx^\star (\tau) + QPu^\star (\tau)\right)$, we have
	\begin{align*}
		\|\wt B^\top\la_2(s)\|^2
		&\le 4\|\wt B\|^2\cdot\left\|\int_s^t e^{(\tau - s)A^\top}F(\tau)\,\text{d}\tau\right\|^2\\
		&\le 4\|\wt B\|^2\left(\int_s^t\big\|e^{(\tau-s)A^\top}\big\|^2\,\text{d}\tau\right)\left(\int_s^t\|F(\tau)\|^2\,\text{d}\tau\right).
	\end{align*}
	Due to the spectral properties of $A$ (cf.\ Theorem \ref{t:ph-charac}), we have $\|e^{tA}\|\le 1+Mt$ for some $M>0$ and all $t\ge 0$. Hence, also $\|e^{tA^\top}\| = \|(e^{tA})^\top\| = \|e^{tA}\|\le 1+Mt$. The middle term can thus be estimated as
	\begin{align*}
		\int_s^t\big\|e^{(\tau-s)A^\top}\big\|^2\,\text{d}\tau
		&\le\int_s^t(1+M(\tau-s))^2\,\text{d}\tau \le (1+Mt_c)^2t_c.
	\end{align*}
	Finally, integrating the last term and using Fubini's theorem yields
	\begin{align*}
		\int_{t-t_c}^t\int_s^t\|F(\tau)\|^2\,\text{d}\tau\,\text{d}s
		&= \int_{t-t_c}^t\|F(\tau)\|^2\int_{t-t_c}^\tau\,\text{d}s\,\text{d}\tau\\
		&= \int_{t-t_c}^t(\tau-t+t_c)\|F(\tau)\|^2\,\text{d}\tau\le t_c\int_{t-t_c}^t\|F(s)\|^2\,\text{d}s,
	\end{align*}
	and \eqref{e:lambda} follows with $C(t_c) = \frac{8(\lambda_0^\star )^2}{\alpha_{t_c}}\cdot\max\left\{1,\|B-P\|^2(1+Mt_c)^2t_c^2\right\}$.
	Now, let $u^\star (t)\in\inte\UU$ for a.e.\ $t\in [t_c,T-t_c]$. Then we again apply Fubini's theorem to get
	\begin{align*}
		\int_{2t_c}^{T-t_c}\!\!\!\!\!\!\|\la^\star (t)\|^2\,\text{d}t
		\le C(t_c)\!\int_{2t_c}^{T-t_c}\!\!\!\!\int_{t-t_c}^t\!\!\!\!\!\big\|W\!\smallvek{x^\star (s)}{u^\star (s)}\big\|^2\,\text{d}s\,\text{d}t\le t_cC(t_c)\!\int_{t_c}^{T-t_c}\!\!\!\!\!\!\big\|W\!\smallvek{x^\star (s)}{u^\star (s)}\big\|^2\text{d}s,
	\end{align*}
	which is \eqref{e:lambda_int}.
\end{proof}
The following corollary is a consequence of the second claim of Theorem~\ref{t:turnpike_reachability}, and the estimates for the integral $\int_0^T\big\|W^{\frac12}\!\smallvek{x^\star (t)}{u^\star (t)}\big\|^2\text{d}t$ in the proof of Theorem \ref{t:turnpike_reachability}.

\begin{corollary}\label{cor:tp_adj}
	Assume that $((J-R)Q,B-P)$ is controllable, $0\in\RT(\Phi)$, and let $T>T(x^0)$, where $T(\cdot)$ is the function from Theorem \ref{t:turnpike_reachability}. Let $(x^\star ,u^\star ,\la^\star )$ satisfy the necessary optimality conditions for the OCP \eqref{eq:phOCP_ode_reform} and assume that $t_c\in (0,T/4)$ is such that $u^\star (t)\in\inte\UU$ for a.e.\ $t\in [t_c,T-t_c]$. Then the adjoint state $\la^\star$ exhibits an integral turnpike with respect to zero, i.e., there is a continuous function $G:\mathbb{R}^n \to [0,\infty)$ such that
	\begin{align*}
		\int_{2 t_c}^{T-t_c} \|\lambda^\star (t)\|^2\,\mathrm{d}t \leq G(x^0).
	\end{align*}
\end{corollary}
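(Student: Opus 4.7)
The plan is to combine the observability estimate \eqref{e:lambda_int} of Lemma \ref{lem:lambdaestimate} with the intermediate integral bound on $\int_0^T \|W^{1/2}\smallvek{x^\star}{u^\star}\|^2\,dt$ that is hidden inside the proof of Theorem \ref{t:turnpike_reachability}. Under the hypotheses $((J-R)Q,B-P)$ controllable and $0\in\RT(\Phi)$, the pair $(\bar x_e,\bar u_e)=(0,0)\in\R^n\times\inte\UU$ is an optimal steady state (by Lemma \ref{l:ssimkern}, since $(0,0)\in\ker W$), so Corollary \ref{c:global_turnpike} applies with this particular equilibrium.

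Concretely, Lemma \ref{lem:lambdaestimate} yields
\begin{equation*}
\int_{2t_c}^{T-t_c}\|\la^\star(t)\|^2\,dt \;\leq\; t_cC(t_c)\int_{t_c}^{T-t_c}\Bigl\|W\smallvek{x^\star(t)}{u^\star(t)}\Bigr\|^2\,dt.
\end{equation*}
Since $W=W^\top\geq 0$, the elementary bound $\|Wz\|^2 = z^\top W^2 z \leq \|W\|\cdot z^\top W z$ gives
\begin{equation*}
\int_{2t_c}^{T-t_c}\|\la^\star(t)\|^2\,dt \;\leq\; t_cC(t_c)\|W\|\int_0^T\Bigl\|W^{1/2}\smallvek{x^\star(t)}{u^\star(t)}\Bigr\|^2\,dt.
\end{equation*}
Now I would go back into the proof of Theorem \ref{t:turnpike_reachability}: specialising the equilibrium there to $(\bar x_e,\bar u_e)=(0,0)$ and setting $T_0=T(x^0;0)$, the estimates carried out just before invoking Lemma \ref{l:easystuff} give
\begin{equation*}
\int_0^T\Bigl\|W^{1/2}\smallvek{x^\star(t)}{u^\star(t)}\Bigr\|^2\,dt \;\leq\; G_0(x^0)+G_1+G_2,
\end{equation*}
with $G_1,G_2$ constants depending only on $T_1$ and the data, and $G_0$ depending continuously on $x^0$ via the minimal-time function $T(\,\cdot\,;0)$ (continuous by Corollary \ref{c:mintime_ss}(ii) applied at $\bar x_e=0$). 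Choosing $G(x^0)\doteq t_cC(t_c)\|W\|\bigl(G_0(x^0)+G_1+G_2\bigr)$ yields the claimed continuous bound.

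The only genuine subtlety, and thus the closest thing to an obstacle, is that the conclusion of Theorem \ref{t:turnpike_reachability} is phrased as an integral distance bound relative to $\ker W$, whereas the right-hand side of \eqref{e:lambda_int} involves the quadratic form $\|W\smallvek{x^\star}{u^\star}\|^2$ directly. One therefore cannot quote the turnpike theorem as a black box; one has to reach into its proof and extract the sharper intermediate estimate above. Once this is done, continuity of $G$ propagates automatically from continuity of the minimal time function, and the combination is essentially mechanical.
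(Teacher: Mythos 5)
Your proposal is correct and follows essentially the route the paper intends: the paper gives no separate proof but states the corollary follows from Lemma \ref{lem:lambdaestimate}, Corollary \ref{c:global_turnpike} (with the optimal steady state $(\bar x_e,\bar u_e)=(0,0)$), and the intermediate bound on $\int_0^T\bigl\|W^{\frac12}\smallvek{x^\star(t)}{u^\star(t)}\bigr\|^2\,dt$ inside the proof of Theorem \ref{t:turnpike_reachability}, which is exactly what you assemble. Your bridging estimate $\|Wz\|^2\le\|W\|\,z^\top Wz$ and the continuity of $G$ via the minimal time function are the right (implicit) glue.
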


\begin{remark}
		In Corollary~\ref{cor:tp_adj}, we assumed that the control constraints are inactive for the majority of the time. Leveraging the subspace turnpike and the decomposition \eqref{e:dec} it can be shown under additional assumptions that the measure of the time instances, where the control constraints are inactive, grows linearly in the time horizon $T$.
\end{remark}

\subsection{Numerical Example: Modified mass-spring damper system}\label{ss:example}
We briefly illustrate the findings of this chapter via a numerical example of a mass-spring damper system, with homogeneous damping given by the dissipation matrix $R$, cf.\ \cite[Section V]{Schaller2020a}. Going beyond  \cite{Schaller2020a}, we also illustrate the adjoint turnpike. We consider
\begin{align*}
	J\doteq   \begin{pmatrix} 
		\phantom{-}0 & 0 & \phantom{-}1\\
		\phantom{-}0 & 0 & -1\\
		-1 & 1 & \phantom{-}0
	\end{pmatrix}, 
	\quad R\doteq   \begin{pmatrix} 
		1&1&0\\1&1&0\\0&0&0
	\end{pmatrix}
\end{align*}
and $Q=I$, $P=0$, $D=S=0$.
The input matrix, initial and terminal region are given by $B=\left(\begin{matrix}
1,0,0\end{matrix}\right)^\top,\quad x^0=\left(\begin{matrix}
1,1,1\end{matrix}\right)^\top,\quad \Psi = \{x_T\} = \{\left(\begin{matrix}
-1.2,-0.7,-1
\end{matrix}\right)^\top\}$.
We solve the corresponding OCP \eqref{eq:phOCP_ode_reform} with horizon $T\in \{10,15,20\}$ where we discretize the ODE with a RK4-method and $N\in \{100,150,200\}$ time discretization points. The corresponding optimization problem is then solved by the \textit{fmincon} function in \textit{MATLAB}. The subspace turnpike phenomenon proved in Theorem~\ref{t:turnpike_reachability} can be observed in Figure~\ref{fig:orbits} where the optimal state approaches the subspace 
\begin{align}
	\label{e:kern2}
	\ker(RQ) = \ker R = \{x\in \mathbb{R}^3\,|\,x_1+x_2=0\}.
\end{align}
The spiraling state trajectory (see Figure \ref{fig:orbits}) can be explained as follows: first, the state quickly approaches $\ker(RQ)$, as predicted by Theorem \ref{t:turnpike_reachability} and Remark \ref{r:only_state}. Note that $\ker(RQ) = N_1$ in this example, where $N_1$ is as in decomposition \eqref{e:decompN1}. Hence, the state $x_2$ in \eqref{e:dec} approaches zero. In addition, we observe in the top right of Figure \ref{fig:orbits} that the optimal control $u$ also approaches zero, which implies that $x_1(t)\approx e^{J_1t}x$ locally. The spiraling effect now results from the skew-symmetry of $J_1$. Further, as depicted in the bottom of Figure~\ref{fig:orbits}, we observe the turnpike towards zero of the adjoint state as proven in Corollary~\ref{cor:tp_adj}.

\begin{figure}[!ht]
	\centering
	\includegraphics[width=0.49\linewidth]{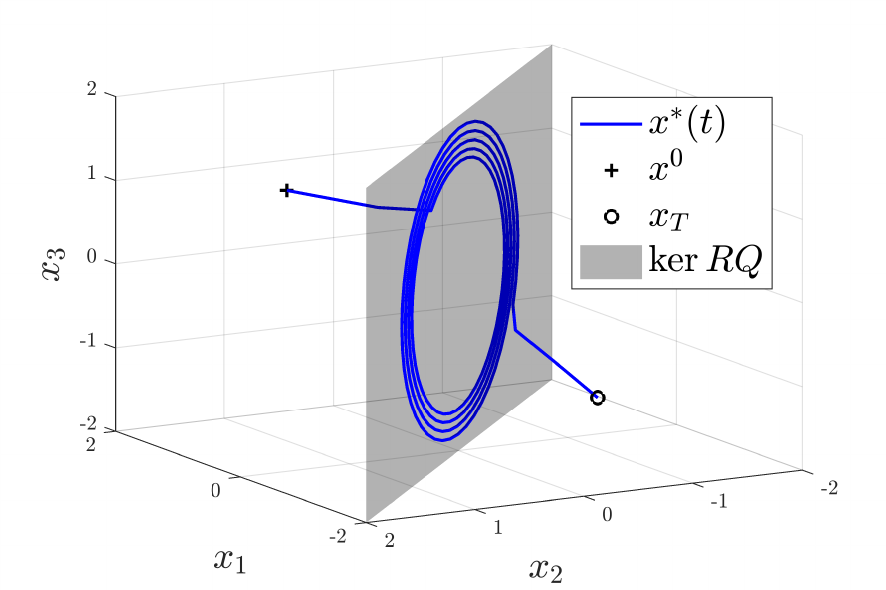} 	\includegraphics[width=0.49\linewidth]{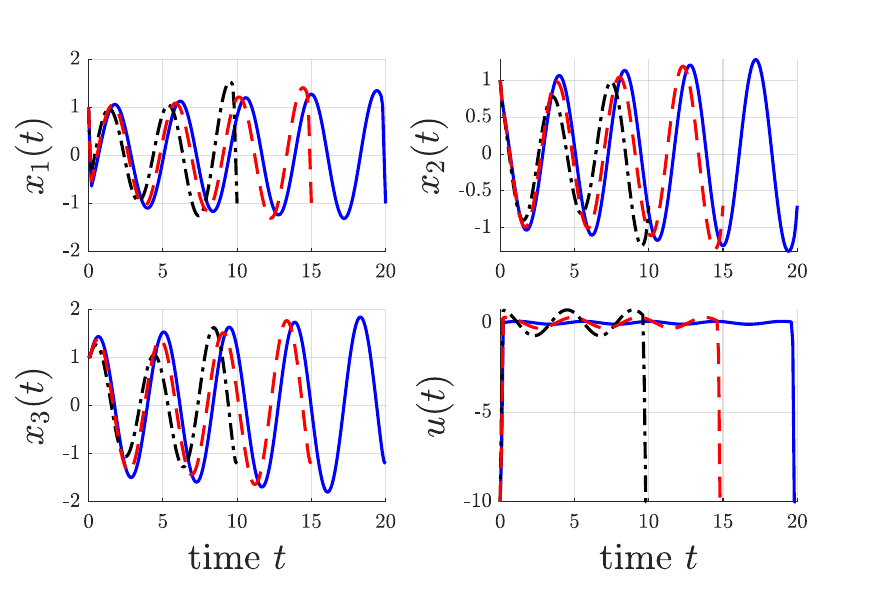}\\
	\includegraphics[width=0.9\linewidth]{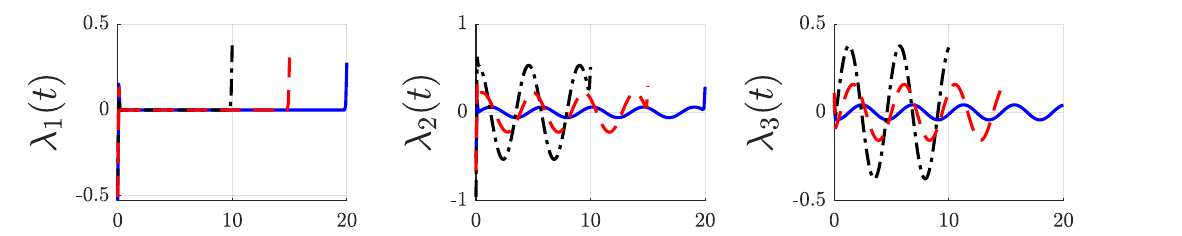}
	\caption{Optimal state and control \braces{top} and corresponding adjoint \braces{bottom} of OCP \eqref{eq:phOCP_ode_reform} for time horizons $T=10$ \braces{\textcolor{black}{\makebox[0.5cm]{\xleaders\hbox to 1.0em{$- \cdot$}\hfill }}}, $T=15$ \braces{\textcolor{red}{\makebox[0.5cm]{\xleaders\hbox to 0.7em{$-$}\hfill }}}, and $T=20$  \braces{\textcolor{blue}{\makebox[0.4cm]{\xleaders\hbox to 1em{---}\hfill }}\label{fig:orbits}}.}
\end{figure}

	\begin{remark}
		In Figure~\ref{fig:orbits}, the optimal trajectory seems to approach a periodic orbit on the turnpike. However, we note that in our setting, the turnpike can not be obtained by means of solving a reduced periodic optimal control problem as in, e.g., \cite[Section 2.2]{trelat2018steady}. Here, a connection between the reduced problem (i.e., the steady state problem \eqref{e:phODE_ssOCP}) and the turnpike subspace is given by Lemma~\ref{l:ssimkern}. However, contrary to classical turnpike results, Lemma~\ref{l:ssimkern} does not yield a reduced OCP that fully characterizes the turnpike set as we only proved that the turnpike subspace contains the set of optimal equilibria, not vice versa. In Figure~\ref{fig:neu}, we show that the seemingly periodic orbit on the turnpike depends on the choice of initial and terminal datum.
		\begin{figure}[!ht]
			\centering
			\includegraphics[width=0.49\linewidth]{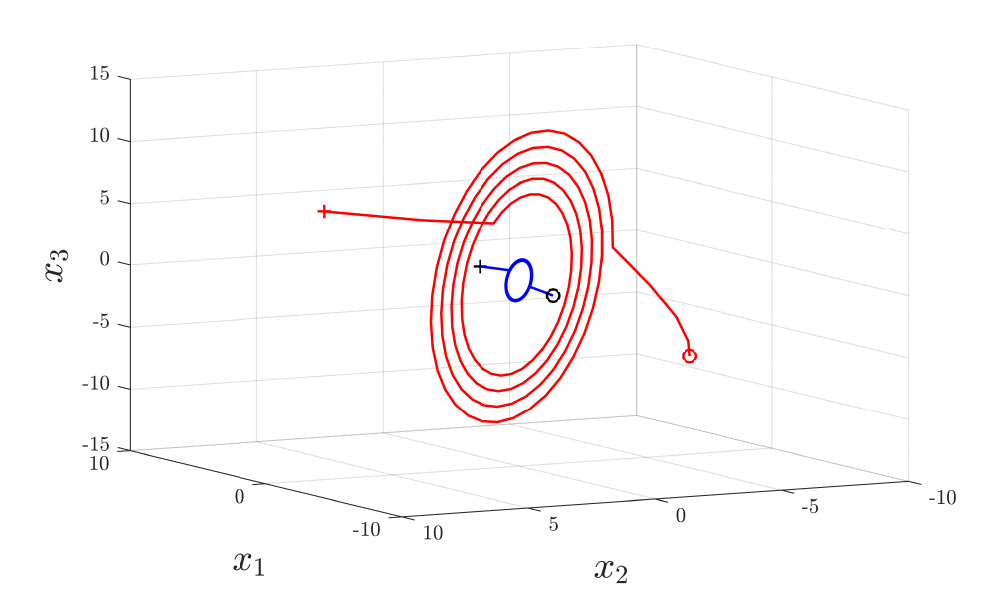}
			\caption{Depiction of two optimal states for different initial and terminal values. The inner trajectory (blue) shows the optimal state of Figure~\ref{fig:orbits}, the outer trajectory (red) the optimal state with the same horizon and with initial and terminal state with each component multiplied by five.}\label{fig:neu}
		\end{figure}
\end{remark}

\section{Port-Hamiltonian DAE-OCPs}\label{s:DAE2}
Subsequently, we leverage the results from  Section \ref{s:ODE} to analyze the pH-DAE OCP \eqref{e:phDAE_OCP}. To this end, let us first discuss reachability properties of the DAE control system~\eqref{e:phDAE_dyn} of index at most one. Let $t>0$ and $w,w^0\in\im E$. We say that $w$ is reachable from $w^0$ at time $t$ under the dynamics in \eqref{e:phDAE_dyn} if there exists a control $u\in L^1(0,t;\UU)$ such that the (possibly non-smooth) solution $x_u$ of the DAE in \eqref{e:phDAE_dyn} with $Ex_u(0) = w^0$ satisfies $Ex(t) = w$. By $\RF_t(w)$ we denote the set of all vectors in $\im E$ that are reachable from $w\in\im E$ at time $t$. Similarly, we denote by $\RT_t(w)$ the set of vectors in $\im E$ from which $w$ is reachable at time $t$. The sets $\RF(w)$ and $\RT(w)$ are defined analogously to their ODE-counterparts in Subsection \ref{ss:reachable} and so are $\RF(\Psi)$ and $\RT(\Psi)$ for sets $\Psi\subset\im E$.
Using the quasi-Weierstra\ss\ form (see \cite{Berger2012}) it is easy to see that the properties of the reach\-able sets for ODEs carry over to the DAE case---with the exception that these sets are contained in $\im E$ and topological properties have to be regarded in the subspace topology of $\im E$.

\subsection{Turnpike properties of minimum energy supply ph-DAE-OCPs}
We shall now define the subspace turnpike property for DAE-OCPs with respect to the state variable, which is the DAE-counterpart to Definition~\ref{def:turnpike_state_control} for ODE problems. Due to the absence of a feed-through term in the ph-DAE-OCP \eqref{e:phDAE_OCP}, we obtain a subspace turnpike purely in the state, as opposed to the input-state turnpike in Definition~\ref{def:turnpike_state_control}.

\begin{definition}[Integral state subspace turnpike property]\label{def:turnpike_state}
	We say that a general DAE-OCP of the form
	\begin{align}
		\begin{split}\label{e:DAE_OCP}
			\min_{u\in L^1(0,T;\UU)} &\varphi(x(T)) + \int_0^T \ell(x(t),u(t))\,\text{d}t \\
			\text{s.t. }\tfrac{\text{d}}{\text{d}t}Ex &= Ax + Bu, \quad Ex(0)=w^0,\quad Ex(T)\in\Psi,
		\end{split}
	\end{align}
	with $C^1$-functions $\ell : \R^{n+m}\to\R$, $\vphi : \R^n\to\R$ and a closed set $\Psi\subset\im E$ has the {\em integral state subspace turnpike property} on a set $S_{\rm tp}\subset\RT(\Psi)$ with respect to a subspace $\mathcal{V}\subset\R^n$, if there are continuous functions $F,T : S_{\rm tp}\to [0,\infty)$ such that for all $w^0\in S_{\rm tp}$ each optimal pair $(x^\star ,u^\star )$ of the OCP \eqref{e:DAE_OCP} satisfies
	\begin{align*}
		\int_0^T\dist^2(x^\star (t),\mathcal{V})\,\text{d}t\,\le\,F(w^0) \quad \text{for all} \quad T > T(x^0).
	\end{align*}
\end{definition}
Let us also define the (optimal) steady states for the DAE-constrained OCP \eqref{e:phDAE_OCP}.

\begin{definition}
	A pair of vectors $(\bar w,\bar u)\in\im E\times\UU$ is called a {\em steady state} of \eqref{e:phDAE_OCP} if there exists $\bar x\in\R^n$ such that $E\bar x = \bar w$ and $(J-R)Q\bar x + B\bar u = 0$. The steady state $(\bar w,\bar u)$ is called {\em optimal} if 
	it is a solution of the following minimization problem:
	\begin{align}\label{e:phDAE_ssOCP}
		\min_{(w,u)\in\im E\times\UU} u^\top y\quad 
		\text{s.t. } 0&= (J-R)Qx + Bu,\quad
		y= B^\top Qx,\quad  w = Ex.
	\end{align}
\end{definition}

A vector $\bar x\in\R^n$ with $E\bar x = \bar w$ and $(J-R)Q\bar x + B\bar u = 0$ is unique. This follows directly from the regularity of the pencil $P(s) = sE-(J-R)Q$.

\begin{lemma}
	$(\bar w,\bar u)\in\im E\times\UU$ is an \braces{optimal\,} steady state of \eqref{e:phDAE_OCP} if and only if $(U^\top\bar w,\bar u)$ is an \braces{optimal\,} steady state of \eqref{e:phDAE_OCPODE}, where $U$ is as in Proposition \rmref{p:beattie}. In particular, a steady state $(\bar w,\bar u)$ of \eqref{e:phDAE_OCP} is optimal if and only if $\bar x\in\ker(RQ)$.
\end{lemma}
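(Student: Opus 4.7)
The plan is to leverage the structure-preserving transformation from Theorem~\ref{t:beattie} together with the identity \eqref{e:RW} from Remark~\ref{r:xz}. I would split the argument into three steps: first, set up a bijection between the steady states of \eqref{e:phDAE_OCP} and those of \eqref{e:phDAE_OCPODE}; second, show that the two objective values agree at corresponding steady states; third, combine these observations with the trivial steady state $(\bar x,\bar u)=(0,0)$ to read off both optimality equivalences simultaneously.

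For the bijection, I would write $\bar x = V\smallvek{\bar z_1}{\bar z_2}$ and decompose the equation $(J-R)Q\bar x + B\bar u = 0$ into the two block relations
\[
L_{11}Q_{11}\bar z_1 + B_1\bar u = 0
\qquad\text{and}\qquad
L_{21}Q_{11}\bar z_1 + L_{22}Q_{22}\bar z_2 + B_2\bar u = 0,
\]
exactly as in the derivation of \eqref{e:beattie_dyn}, with $L_{ij} = J_{ij}-R_{ij}$. Direct inspection of the formulas in Corollary~\ref{c:beattie} yields $\hat B-\hat P = B_1$, so the first block relation is precisely the steady-state condition of \eqref{e:phDAE_OCPODE}. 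The second relation uniquely determines $\bar z_2$ via \eqref{e:z2}, thanks to the invertibility of $L_{22}$ and $Q_{22}$ from Theorem~\ref{t:beattie}. Since $U^\top\bar w = U^\top E\bar x = \smallvek{\bar z_1}{0}$ under the identification $U^\top\im E\cong\R^{n_1}$, this delivers the claimed bijection of steady states.

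For the matching of objectives, on the DAE side I would pre-multiply $(J-R)Q\bar x + B\bar u = 0$ by $\bar x^\top Q^\top$ and invoke skew-symmetry of $J$ to eliminate the $J$-term, obtaining $\bar u^\top y = \bar x^\top Q^\top RQ\bar x = \|R^{1/2}Q\bar x\|^2$. On the ODE side, evaluating \eqref{eq:costfunceq_reform} at $(\bar z_1,\bar u)$ with $\frac{d}{dt}\hat H(\bar z_1) = 0$ gives $\bar u^\top\bar y = \smallvek{\bar z_1}{\bar u}^\top\hat W\smallvek{\bar z_1}{\bar u}$, and by \eqref{e:RW} this equals $\bar x^\top Q^\top RQ\bar x$ because the $\bar z_2$ fixed in the first step satisfies \eqref{e:z2}. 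Hence the two objective values coincide.

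Both objectives are non-negative and the trivial steady state $(\bar w,\bar u,\bar x)=(0,0,0)$ achieves value $0$ in each problem, so the optimal value in either problem is zero. A steady state is thus optimal if and only if the common value $\|R^{1/2}Q\bar x\|^2$ vanishes, i.e., $\bar x\in\ker(R^{1/2}Q) = \ker(RQ)$; this yields both the ``(optimal)'' half of the main equivalence and the ``in particular'' claim. The main potential obstacle will be the bookkeeping in the first step, namely confirming $\hat B-\hat P = B_1$ and that the hypothesis of Remark~\ref{r:xz} for \eqref{e:RW} is met by the $\bar z_2$ constructed there; both reduce to direct manipulations of the formulas already collected in Corollary~\ref{c:beattie}.
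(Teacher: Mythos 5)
Your proposal is correct and follows essentially the same route as the paper: the same block decomposition of the steady-state equation via Theorem \ref{t:beattie} (with $\hat B-\hat P=B_1$ and $\bar z_2$ fixed by \eqref{e:z2}) gives the bijection, and the optimality equivalence plus the ``in particular'' claim reduce to the identity \eqref{e:RW} together with the nonnegativity argument underlying Lemma \ref{l:ssimkern}. Your only cosmetic deviation is that you recompute the objective values on both sides explicitly (via \eqref{eq:costfunceq_reform} and premultiplication by $\bar x^\top Q^\top$) instead of citing the invariance of $u$ and $y$ under the transformation, which is what the paper does.
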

\begin{proof}
	We use the notation from Proposition \ref{p:beattie}. Setting $\bar z = V^{-1}\bar x$, the equation $(J-R)Q\bar x + B\bar u=0$ is equivalent to
	\begin{equation}\label{e:malwieder}
		(J_{11}-R_{11})Q_{11}\bar z_1 + B_1\bar u = 0
		\qquad\text{and}\qquad
		\bar z_2 = -Q_{22}^{-1}L_{22}^{-1}(L_{21}Q_{11}\bar z_1 + B_2\bar u).
	\end{equation}
	Now, if $(\bar w,\bar u)$ is a steady state of \eqref{e:phDAE_OCP} and $\bar w = E\bar x$, then $U^\top\bar w = U^\top EV\bar z = \bar z_1$ (see Proposition~\ref{p:beattie}), so that $(U^\top\bar w,\bar u)$ is a steady state of \eqref{e:phDAE_OCPODE}. Conversely, if $(U^\top\bar w,\bar u)$ is a steady state of \eqref{e:phDAE_OCPODE} and we set $\bar z_1 \doteq   U^\top\bar w$, $\bar z_2$ as in \eqref{e:malwieder}, and $\bar x \doteq   V\bar z$, then $E\bar x = U^{-\top}(U^\top EV)\bar z = U^{-\top}\bar z_1 = \bar w$, which shows that $(\bar w,\bar u)$ is a steady state of \eqref{e:phDAE_OCP}. The equivalence of optimal steady states follows from the fact that the transformation in Proposition \ref{p:beattie} does not change the input $u$ and the output $y$. The ``in particular''-part is a consequence of Remark \ref{r:xz} and Lemma \ref{l:ssimkern}.
\end{proof}
The following theorem is our main result concerning the turnpike behavior of optimal solutions of the pH-DAE OCP \eqref{e:phDAE_OCP}.

\begin{theorem}[Integral state subspace turnpikes]\label{thm:DAE_tp}
	Let $(\bar w,\bar u)\in\im E\times\inte\UU$ be an optimal steady state of \eqref{e:phDAE_OCP} such that $\bar w\in\RT(\Psi)$. Then the OCP \eqref{e:phDAE_OCP} has the integral state subspace turnpike property on $\RT(\bar w)$ with respect to $\ker(RQ)$.
\end{theorem}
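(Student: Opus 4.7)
The plan is to reduce the DAE-OCP \eqref{e:phDAE_OCP} to the ODE-OCP \eqref{e:phDAE_OCPODE} using Theorem \ref{t:beattie}, apply the ODE turnpike result (Theorem \ref{t:turnpike_reachability}) to the reduced problem, and then translate the resulting input-state subspace turnpike in $(z_1,u)$ back to a state subspace turnpike in $x$ via the identity in Remark \ref{r:xz}.

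First I would invoke the lemma immediately preceding the theorem to conclude that $(\bar z_1^e,\bar u_e)\doteq (U^\top\bar w,\bar u)\in\R^{n_1}\times\inte\UU$ is an optimal steady state of the reduced ODE-OCP \eqref{e:phDAE_OCPODE}. The assumption $\bar w\in\RT(\Psi)$ (reachability in the DAE sense) transfers to $\bar z_1^e\in\RT(\Phi_1)$ in the ODE sense: if a DAE trajectory with control $u$ steers $\bar w$ into $\Psi$, then since $U^\top E x$ has first $n_1$ components equal to $z_1$ and last ones zero (by \eqref{e:matrix_transforms}), the corresponding $z_1$-trajectory steers $\bar z_1^e$ into $\Phi_1=U^\top\Psi$. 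Analogously, any $w^0\in\RT(\bar w)$ gives rise to $z_1^0\doteq  U^\top w^0\in\RT(\bar z_1^e)$ in the ODE sense.

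Next, I would apply Theorem \ref{t:turnpike_reachability} to \eqref{e:phDAE_OCPODE} with the above optimal steady state. In fact, I would not merely use its statement but extract the intermediate bound proved along the way, namely
\begin{equation*}
\int_0^T \left\|\hat W^{1/2}\vek{z_1^\star(t)}{u^\star(t)}\right\|^2 dt \,\le\, \tilde F(z_1^0),
\end{equation*}
where $\tilde F$ is continuous (the sum $G_0+G_1+G_2$ from that proof, continuous in $z_1^0$ by continuity of the minimal-time function, Corollary \ref{c:mintime_ss}). This holds for all $T$ greater than the same continuous threshold $\tilde T(z_1^0)$ from that proof.

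Now the key translation step: by Remark \ref{r:xz}, for the DAE-optimal trajectory $x^\star$ corresponding to the ODE-optimal pair $(z_1^\star,u^\star)$ we have
\begin{equation*}
x^\star(t)^\top Q^\top R Q x^\star(t) \,=\, \vek{z_1^\star(t)}{u^\star(t)}^{\!\top}\hat W \vek{z_1^\star(t)}{u^\star(t)} \quad \text{a.e.}
\end{equation*}
Since $Q^\top R Q$ is symmetric positive semi-definite with $\ker(Q^\top R Q)=\ker(RQ)$ (as $Q^\top R Q x = 0$ implies $\|R^{1/2}Qx\|^2=0$, hence $RQx=0$), Lemma \ref{l:easystuff} yields
\begin{equation*}
\dist^2(x^\star(t),\ker(RQ)) \,\le\, \tfrac{1}{\lambda_{\min}(Q^\top R Q)}\, x^\star(t)^\top Q^\top R Q x^\star(t).
\end{equation*}
Combining these two with the integrated $\hat W$-bound and integrating yields
\begin{equation*}
\int_0^T \dist^2(x^\star(t),\ker(RQ))\, dt \,\le\, F(w^0) \doteq   \tfrac{1}{\lambda_{\min}(Q^\top RQ)}\,\tilde F(U^\top w^0),
\end{equation*}
which is the required estimate with $T(w^0)\doteq  \tilde T(U^\top w^0)$. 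Continuity of $F$ and $T$ on $\RT(\bar w)$ follows from continuity of $\tilde F$ and $\tilde T$ on $\RT(\bar z_1^e)$ and the linear map $w^0\mapsto U^\top w^0$.

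The main technical obstacle is the translation step: one must use Remark \ref{r:xz} to identify the ODE quadratic form $(z_1,u)^\top\hat W(z_1,u)$ with the DAE dissipation $x^\top Q^\top RQ x$, and then pass from the $\hat W$-weighted bound (which mixes state and control) to a bound on the state alone via the algebraic identity $\ker(Q^\top R Q)=\ker(RQ)$. This is precisely why the DAE case admits a purely state turnpike (Definition \ref{def:turnpike_state}) even though the reduced ODE-OCP only enjoys an input-state turnpike in the presence of feed-through; the feed-through terms $\hat P,\hat S,\hat N$ introduced by the reduction are exactly what get absorbed when one rewrites the combined quadratic form back as a pure state expression $x^\top Q^\top R Q x$ in the original coordinates.
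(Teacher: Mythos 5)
Your proposal is correct and follows essentially the same route as the paper: reduce to the ODE-OCP \eqref{e:phDAE_OCPODE} via the steady-state lemma and the reachability correspondence, invoke Theorem \ref{t:turnpike_reachability}, and translate back using Remark \ref{r:xz} together with Lemma \ref{l:easystuff} and $\ker(Q^\top RQ)=\ker(RQ)$. The only (harmless) difference is that you cite the intermediate bound $\int_0^T\bigl\|\hat W^{1/2}\smallvek{z_1^\star}{u^\star}\bigr\|^2dt\le\wt F(z_1^0)$ from inside the proof of Theorem \ref{t:turnpike_reachability}, whereas the paper uses that theorem's stated $\dist^2(\cdot,\ker\hat W)$ estimate and re-applies Lemma \ref{l:easystuff}, which merely introduces an extra factor $\|\hat W\|$ in the constant.
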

\begin{proof}
	Let $\bar z_1 \doteq   U^\top\bar w$. Then $(\bar z_1,\bar u)$ is an optimal steady state of OCP \eqref{e:phDAE_OCPODE}. It is easily seen that $\RT(\Psi) = U^{-\top}\RT_{\rm ODE}(\Phi_1)$, where $\RF_{\rm ODE}$ denotes the reachable set with respect to the ODE system in \eqref{e:phDAE_OCPODE}. Therefore, $\bar w\in\RT(\Psi)$ is equivalent to $\bar z_1\in\RT_{\rm ODE}(\Phi_1)$. Hence, by Theorem \ref{t:turnpike_reachability} there exist continuous functions $\wt F,\wt T : \RT_{\rm ODE}(\bar z_1)\to [0,\infty)$ such that for all $z_1^0\in\RT_{\rm ODE}(\bar z_1)$ each optimal pair $(z_1^\star ,u^\star )$ of the OCP \eqref{e:phDAE_OCPODE} with initial datum $z_1^\star (0)=z_1^0$ and $T > \wt T(z_1^0)$ satisfies $\int_0^T\dist^2\big((z_1^\star (t),u^\star (t)),\ker\hat W\big)\,\text{d}t\,\le\,\wt F(z_1^0)$.
	Define $F,T : \RT(\bar w)\to [0,\infty)$ by $F(w) \doteq   \|\hat W\|\la_{\min}^{-1}\cdot\wt F(U^\top w)$ and $T(w) \doteq   \wt T(U^\top w)$, $w\in\RT(\bar w)$, where $\la_{\min}$ is the smallest positive eigenvalue of $Q^\top RQ$. Let $w^0\in\RT(\bar w)$ and let $(x^\star ,u^\star )$ be an optimal pair of \eqref{e:phDAE_OCP} with initial datum $Ex(0) = w^0$. Set $z_1^\star  \doteq   U^\top Ex^\star $ and $z_1^0 \doteq   U^\top w^0$. Then $(z_1^\star ,u^\star )$ is an optimal pair of \eqref{e:phDAE_OCPODE} with $z_1^\star (0) = z_1^0$ and for $T > T(w^0)$ we have $T > \wt T(z_1^0)$ and thus (see Lemma \ref{l:easystuff} and Remark \ref{r:xz}) the claim follows with
	\begin{align*}
		\int_0^T\!\!\dist^2(x^\star (t),\ker(RQ))\,\text{d}t
		&\le \la_{\min}^{-1}\int_0^T\|R^{\frac 12}Qx^\star (t)\|^2\,\text{d}t = \la_{\min}^{-1}\int_0^T\big\|\hat W^{\frac 12}\smallvek{z_1^\star (t)}{u^\star (t)}\big\|^2\,\text{d}t\\
		&\le \frac{\|\hat W\|}{\la_{\min}}\int_0^T\dist^2\big((z_1^\star (t),u^\star (t)),\ker\hat W\big)\,\text{d}t\,\le\,F(w^0).
	\end{align*}
\end{proof}
Recall that a regular DAE control system $\tfrac {\text{d}}{\text{d}t}Ex = Ax + Bu$ in $\R^n$ (or simply $(E,A,B)$) is called {\em R-controllable} if $\rank[\la E - A\quad B] = n$ for all $\la\in\C$. This is obviously a generalization of the Hautus test. 
We have
\begin{align*}
	\rank[\la E - (J-R)Q\quad B]
	&= \rank\big[\la U^\top E - U^\top(J-R)UU^{-1}Q\quad U^\top B\big]\\
	&= \rank\big[\la U^\top EV - U^\top(J-R)UU^{-1}QV\quad U^\top B\big]\\
	&= \rank\left[\begin{pmatrix}\la - L_{11}Q_{11} & 0 & B_1\\-L_{21}Q_{11} & -L_{22}Q_{22} & B_2\end{pmatrix}\right]\\
	&= \rank\big[\la - L_{11}Q_{11}\quad B_1\big] + n_2,
\end{align*}
where $n_2 \doteq   n - n_1$ and $L_{ij} = J_{ij} - R_{ij}$, $i,j=1,2$. The last equality holds since $L_{22}Q_{22}$ is invertible. Hence, $(E,(J-R)Q,B)$ is R-controllable if and only if the ODE control system in \eqref{e:phDAE_OCPODE} is controllable and we obtain directly from the global turnpike result of Theorem~\ref{t:turnpike_reachability}.

\begin{corollary}
	Assume that $(E,(J-R)Q,B)$ is R-controllable. If there exists an optimal steady state $(\bar w,\bar u)\in\im E\times\inte\UU$ of \eqref{e:phDAE_OCP} such that $\bar w\in\RT(\Psi)$, then the OCP \eqref{e:phDAE_OCP} has the state integral turnpike property on $\im E$ with respect to $\ker(RQ)$.
\end{corollary}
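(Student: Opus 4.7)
The plan is to essentially chain together two facts already established in the excerpt: the equivalence between R-controllability of $(E,(J-R)Q,B)$ and controllability of the reduced ODE system $((J_{11}-R_{11})Q_{11},B_1)$ (shown by the rank computation immediately preceding the corollary), and Corollary \ref{c:global_turnpike} which delivers a global input-state integral turnpike for controllable pH-ODEs with feed-through. The third ingredient is the transfer argument developed in the proof of Theorem \ref{thm:DAE_tp}, which translates ODE-turnpikes for $(z_1,u)$ into DAE-turnpikes for $x$ with respect to $\ker(RQ)$.

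Concretely, I would first invoke R-controllability of $(E,(J-R)Q,B)$ together with the rank identity displayed above the corollary to conclude that the reduced ODE system
\[
\dot z_1 = (J_{11}-R_{11})Q_{11}z_1 + (\hat B-\hat P)u
\]
arising from Corollary \ref{c:beattie} is controllable. Note that the matrix $\hat B-\hat P = B_1$ in the notation of Theorem \ref{t:beattie}, so this is exactly the hypothesis needed. Next, from the standing assumption that $(\bar w,\bar u)\in\im E\times\inte\UU$ is an optimal steady state of \eqref{e:phDAE_OCP} with $\bar w\in\RT(\Psi)$, the lemma preceding Theorem \ref{thm:DAE_tp} yields that $(\bar z_1,\bar u) := (U^\top\bar w,\bar u)$ is an optimal steady state of \eqref{e:phDAE_OCPODE} lying in $\R^{n_1}\times\inte\UU$, and the identification $\RT(\Psi) = U^{-\top}\RT_{\mathrm{ODE}}(\Phi_1)$ from the proof of Theorem \ref{thm:DAE_tp} gives $\bar z_1\in\RT_{\mathrm{ODE}}(\Phi_1)$.

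At this point Corollary \ref{c:global_turnpike} applies to OCP \eqref{e:phDAE_OCPODE}: there exist continuous $\wt F,\wt T:\R^{n_1}\to[0,\infty)$ such that for every initial datum $z_1^0\in\R^{n_1}$ and every horizon $T>\wt T(z_1^0)$, the optimal pair $(z_1^\star,u^\star)$ satisfies
\[
\int_0^T \dist^2\big((z_1^\star(t),u^\star(t)),\ker\hat W\big)\,dt \le \wt F(z_1^0).
\]
To transfer this back to the DAE problem, I follow the exact estimate used at the end of the proof of Theorem \ref{thm:DAE_tp}: setting $F(w):=\|\hat W\|\la_{\min}^{-1}\wt F(U^\top w)$ and $T(w):=\wt T(U^\top w)$ with $\la_{\min}$ the smallest positive eigenvalue of $Q^\top RQ$, and using Remark \ref{r:xz} together with Lemma \ref{l:easystuff}, one obtains
\[
\int_0^T \dist^2(x^\star(t),\ker(RQ))\,dt \le \la_{\min}^{-1}\int_0^T \|R^{1/2}Qx^\star(t)\|^2\,dt \le F(w^0)
\]
for every $w^0\in\im E$ and every $T>T(w^0)$. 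Since $F$ and $T$ are defined on all of $\im E$, this establishes the state integral turnpike property on $S_{\rm tp}=\im E$ with respect to $\ker(RQ)$, as required.

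The only point requiring care is the domain: in Theorem \ref{thm:DAE_tp} the turnpike is stated on $\RT(\bar w)$, but here controllability makes this the full reachable subspace, which under the decomposition $U^\top E V = \operatorname{diag}(I_{n_1},0)$ corresponds to $\im E$. I do not expect a real obstacle — the statement really is an immediate corollary, and the main bookkeeping is simply to check that each ingredient used in Theorem \ref{thm:DAE_tp} (the transformation identities, the eigenvalue estimate, the identification of reachable sets) still holds under the stronger controllability assumption, which indeed enlarges $\RT_{\rm ODE}(\bar z_1)$ to all of $\R^{n_1}$ and hence $\RT(\bar w)$ to all of $\im E$.
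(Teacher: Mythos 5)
Your proposal is correct and follows essentially the same route as the paper: the paper's proof consists precisely of the rank computation identifying R-controllability of $(E,(J-R)Q,B)$ with controllability of the reduced system $((J_{11}-R_{11})Q_{11},\hat B-\hat P)$, an appeal to Corollary \ref{c:global_turnpike}, and the transfer argument (steady-state correspondence, $\RT(\Psi)=U^{-\top}\RT_{\rm ODE}(\Phi_1)$, Remark \ref{r:xz}, Lemma \ref{l:easystuff}) already set up in the proof of Theorem \ref{thm:DAE_tp}. You have merely written out in full the steps the paper compresses into ``follows immediately,'' and all the details (including $\hat B-\hat P=B_1$ and the identification of $\im E$ with $\R^{n_1}$ via $U^\top$) check out.
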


\begin{remark}
	The notion of R-controllability for DAE control systems introduced above was first defined in \cite{yip1981}. In \cite{berger2013controllability} the authors show that this property is equivalent to the so-called {\em controllability in the behavioral sense}. However, both \cite{yip1981} and \cite{berger2013controllability} work with DAEs of the type $E\dot x = Ax+Bu$ (instead of $\frac{\text{d}}{\text{d}t}Ex = Ax+Bu$) which are more restrictive due to the regularity requirement on $x$.
\end{remark}

\subsection{Numerical example: Force control of a robot in vertical translation of the end-effector}
Let us consider the force control of a robot manipulator as described in \cite{Volpe1994}. The robot is the type CMU DD II and its end-effector is endowed with a force sensor. We slightly adapt the parameters from \cite{Volpe1994} and set the mass to $m_A = 1.1$, $m_B = 0.1$, and the stiffness parameters to $k_1=0$, $k_2 =5$, and $k_3=\infty$. The choice of the stiffness coefficient  $k_3$ induces clearly a constraint: the elongation of the spring $3$ is taken to $0$, hence yielding a singular matrix $E$(see a similar example in \cite{vdSchaft2018}). The damping parameters are set to $c_1 = 10$, $c_2=10$, and $c_3=17$. The structure and dissipation matrix are given by
\begin{align*}
	R=\left(\begin{smallmatrix}
		0_{3} & 0_{3\times2}\\
		0_{2\times3} & \left(\begin{smallmatrix}
			\left(c_{1}+c_{2}\right) & -c_{2}\\
			-c_{2} & \left(c_{2}+c_{3}\right)
		\end{smallmatrix}\right)
	\end{smallmatrix}\right),\quad  \text{and }\quad J=\left(\begin{smallmatrix}
		0_{3} & \Gamma\\
		-\Gamma^{\top} & 0_{2}
	\end{smallmatrix}\right), \quad \text{where}\quad
	\Gamma=\left(\begin{smallmatrix}
		1 & 0\\
		-1 & 1\\
		0 & -1
	\end{smallmatrix}\right).
\end{align*}
Let further $E=\textrm{diag}(1,\,1,\,\frac{1}{k_{3}},m_{A},m_{B})$, $Q=\textrm{diag}(k_{1},\,k_{2},\,1,\,1,1)$ (with the convention $\tfrac{1}{\infty}=0$), $B=(0,0,0,1,0)^{\top}$, $w^0=(1,1,0,1,0)^\top$, and $\Psi = \{w_T\} = \{(1,1,0,2,0)^\top\}$. We eliminate the algebraic constraints and discretize the corresponding three dimensional ODE for time horizons $T\in \{5,10,15\}$ by discretization with a RK4 method with $N\in \{1000,2000,3000\}$ time steps. The resulting OCP is then solved by \textit{CasADi} \cite{Andersson2019}.
\begin{figure}[!ht]
	\centering
	\includegraphics[width=.9\linewidth]{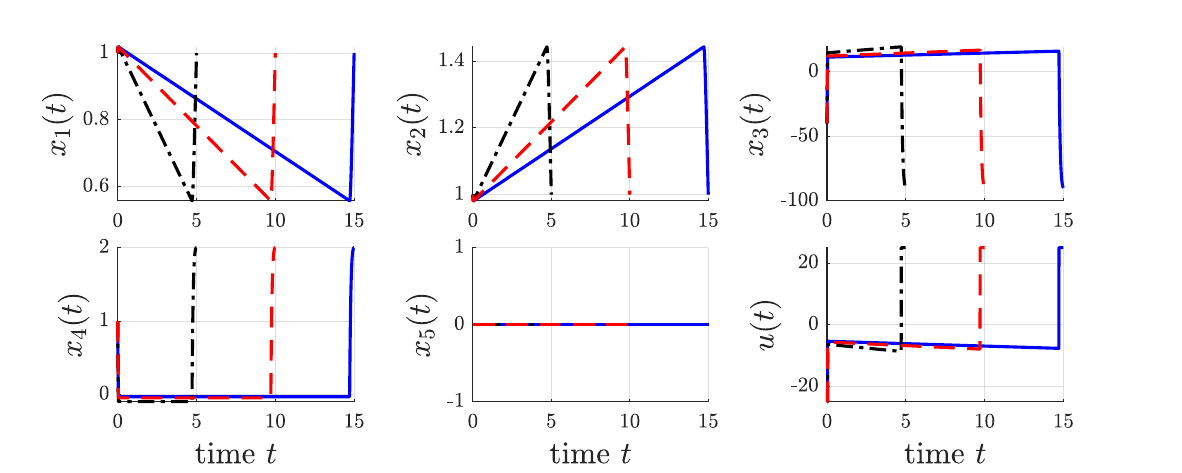}
	\caption{Optimal state and control of the DAE-OCP \eqref{e:phDAE_OCP} for time horizons $T=5$ (\textcolor{black}{\makebox[0.5cm]{\xleaders\hbox to 1.0em{$- \cdot$}\hfill }}), $T=10$ (\textcolor{red}{\makebox[0.5cm]{\xleaders\hbox to 0.7em{$-$}\hfill }}), $T=15$  (\textcolor{blue}{\makebox[0.4cm]{\xleaders\hbox to 1em{---}\hfill }}).\label{fig:dae}}
\end{figure}
Here we have $\ker(RQ) = \{x\in \mathbb{R}^5\,|\, x_4=x_5=0\}.$ In Figure~\ref{fig:dae} we observe the subspace turnpike behavior proven in Theorem~\ref{thm:DAE_tp} for the corresponding DAE-OCP \eqref{e:phDAE_OCP}, i.e., the optimal state is close to $\ker(RQ)$ for the majority of the time.

\FloatBarrier
\section{Conclusion}
\label{s:conclusion}
This paper has investigated a class of optimal control problems for linear port-Hamil\-tonian descriptor systems. We have shown that, considering the supplied energy as the objective to be minimized, the optimal solutions transferring initial data to prescribed target sets exhibit the turnpike phenomenon. Specifically, we have presented results on input-state subspace turnpikes in the ODE-constrained reduction of the original DAE-constrained problem. We have shown that the  input-state subspace ODE turnpike corresponds to a state subspace DAE turnpike in the original problem. Importantly, we generalized the classical notion of steady-state turnpikes to subspace turnpikes. In the context of pH systems this turnpike subspace, which can be regarded as the attractor of infinite-horizon optimal solutions, is the nullspace of the dissipation matrix $RQ$. 
Future work will consider the extension towards infinite-dimensional systems (we refer to~\cite{Philipp2021} for first steps in this direction) and towards using the global dissipation inequality to derive global turnpike results for nonlinear port-Hamiltonian dynamics, e.g., irreversible systems occurring in thermodynamics.

\bibliographystyle{abbrv}
\bibliography{references.bib}

\
\\
\appendix
\section{A short solution theory for regular DAEs}\label{a:dae_solutions}
Let $b : [0,\infty)\to\R^n$ be measurable and $E,A\in\R^{n\times n}$. We consider the initial value problem (IVP)
\begin{equation}\label{e:ivp}
\tfrac d{dt}Ex = Ax + b,\quad Ex(0) = Ex^0.
\end{equation}
Here, we shall assume that the corresponding pencil $P(s) = sE-A$ is regular, i.e., there exists $\mu\in\C\backslash\{0\}$, such that $\mu E - A$ is invertible. We shall now present a subspace version of the quasi-Weierstra\ss\  form (cf.\ \cite{Berger2012}) which simplifies the solution analysis for \eqref{e:ivp}. Set $T \doteq   (\mu E - A)^{-1}E$ and consider the spaces\footnote{It can be shown that $V_{k+1} = A^{-1}EV_k$ (pre-image) and $W_{k+1} = E^{-1}W_k$, where $V_0 = \R^n$ and $W_0 = \{0\}$. These sequences of linear spaces are called {\em Wong sequences}.}
$$
V_k \doteq   \im T^k
\qquad\text{and}\qquad
W_k \doteq   \ker T^k.
$$
It is clear that $V_{k+1}\subset V_k$ and $W_k\subset W_{k+1}$. Moreover, it is easy to show that $V_{k+1} = V_k$ implies $V_{k+j} = V_k$ for all $j\in\N$. The smallest $k$ for which this happens is called the {\em descent} of $T$.  We denote it by $\delta(T)$. Similarly, $W_{k+1} = W_k$ implies $W_{k+j} = W_k$ for all $j\in\N$. The smallest such $k$ is called the {\em ascent} of $T$ which we denote by $\alpha(T)$. The following lemma is well-known.

\begin{lemma}\label{l:ascdesc}
We have $\alpha(T) = \delta(T) =: m$ and 
\begin{equation}\label{e:decVW}
\R^n = V_m\oplus W_m.
\end{equation}
\end{lemma}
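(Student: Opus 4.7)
The plan is to prove this via the standard Fitting-decomposition argument for a linear operator on a finite-dimensional space. The two assertions, equality of ascent and descent and the direct-sum decomposition, will be derived from the rank–nullity identity $\dim V_k + \dim W_k = n$, which holds for every $k$ since $V_k = \im T^k$ and $W_k = \ker T^k$.

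First I would show $\alpha(T) = \delta(T)$. The chain $W_0 \subset W_1 \subset \dots$ is increasing and bounded in dimension, hence stabilizes, and likewise $V_0 \supset V_1 \supset \dots$ stabilizes; call the respective stabilization indices $\alpha(T)$ and $\delta(T)$. Suppose $W_{k+1} = W_k$; then $\dim W_{k+1} = \dim W_k$, so by rank–nullity $\dim V_{k+1} = \dim V_k$, and since $V_{k+1} \subset V_k$ this forces $V_{k+1} = V_k$. The converse implication is symmetric. Hence the two chains stabilize at the same index, which I denote $m$.

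Next I would verify $\R^n = V_m \oplus W_m$. By rank–nullity, $\dim V_m + \dim W_m = n$, so it suffices to show $V_m \cap W_m = \{0\}$. Let $x \in V_m \cap W_m$. Write $x = T^m y$ for some $y \in \R^n$, and use $T^m x = 0$ to deduce $T^{2m} y = 0$, i.e., $y \in W_{2m}$. By the already-established stabilization of the kernel chain at $m$, we have $W_{2m} = W_m$, so $T^m y = 0$, giving $x = 0$. Combined with the dimension count, the decomposition follows.

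The only mild subtlety is the bookkeeping with the ascent–descent stabilization: one needs that once $V_{k+1} = V_k$ (resp.\ $W_{k+1} = W_k$), equality propagates to all larger indices, which is an easy induction using $V_{k+1} = T(V_k)$ (resp.\ $W_{k+1} = T^{-1}(W_k)$). No step should present a genuine obstacle; the argument is purely linear-algebraic and independent of the specific definition of $T$ from the pencil.
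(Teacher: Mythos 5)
Your proof is correct and follows essentially the same route as the paper: rank--nullity to identify ascent and descent, and the $T^{2m}y=0$ argument (using stabilization of the kernel chain at $m$) to show $V_m\cap W_m=\{0\}$, with the dimension count completing the direct sum. The only difference is that you spell out explicitly the propagation of equality along the chains and the use of $W_{2m}=W_m$, which the paper leaves implicit.
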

\begin{proof}
The rank-nullity formula $n = \dim\ker T^k + \dim\im T^k$ implies $\alpha(T) = \delta(T)$. If $x\in V_m\cap W_m$, then $T^mx=0$ and $T^my=x$ for some $y$, thus $T^{2m}y=0$, which implies $x=T^my=0$. Hence, $V_m\cap W_m=\{0\}$ and \eqref{e:decVW} follows again from the rank-nullity formula.
\end{proof}

The number $m$ appearing in Lemma \ref{l:ascdesc} is called the {\em differentiation index} of the DAE in \eqref{e:ivp}. At the same time, it is called the {\em index} of the pencil $sE-A$.

Set $V \doteq   V_m$ and $W \doteq   W_m$. The spaces $V$ and $W$ are obviously $T$-invariant. Let $T_V : V\to V$ and $T_W : W\to W$ be the restrictions of $T$ to $V$ and $W$, respectively. Note that $T_W$ is nilpotent with nilpotency index $m$. Therefore, $\mu T_W - I_W$ is invertible. Since $\ker T\subset W$, also $T_V$ is invertible. Now, define the maps (on $W$, $V$, and $\R^n$, respectively)
$$
N \doteq   T_W(\mu T_W-I_W)^{-1}, \quad C \doteq   \mu I_V - T_V^{-1}, \quad\text{and }S \doteq   EP_V + AP_W,
$$
where $P_V$ ($P_W$) denotes the projection onto $V$ ($W$, resp.) with respect to the decomposition \eqref{e:decVW}. The map $S$ is invertible. Indeed, if $EP_Vx + AP_Wx=0$, we apply $(\mu E - A)^{-1}$ and obtain $T_VP_Vx + (\mu T_W - I_W)P_Wx = 0$. This implies $P_Vx = 0$ and $P_Wx = 0$, thus $x=0$. From $AT = E(\mu T-I)$ it is now easily seen that
\begin{align}\label{e:quasi-weierstrass}
S(C\oplus I_W) = A\qquad\text{and}\qquad S(I_V\oplus N) = E.
\end{align}
Therefore, the IVP \eqref{e:ivp} transforms into
$$
\tfrac d{dt}(I_V\oplus N)x = (C\oplus I_W)x + S^{-1}b, \qquad (I_V\oplus N)x(0) = (I_V\oplus N)x^0.
$$
Setting $c \doteq   -S^{-1}b$, we equivalently get
\begin{subequations}\label{e:ivp_sys}
\begin{alignat}{3}
\dot x_V &= Cx_V - c_V,\qquad &&\phantom{N}x_V(0) &&= x_V^0,\label{e:ivp_sys1}\\
\tfrac d{dt}Nx_W &= x_W - c_W,&&Nx_W(0) &&= Nx_W^0.\label{e:ivp_sys2}
\end{alignat}
\end{subequations}
Obviously, \eqref{e:ivp_sys1} is an ODE IVP and thus has a unique solution. In what follows, we set $W^{1,1} \doteq   W^{1,1}_{\rm loc}([0,\infty))$.
\newpage
\begin{proposition}\label{p:vks}
The following hold:
\begin{enumerate}
\item[{\rm (i)}] If $m=0$, then $E$ is invertible and the IVP \eqref{e:ivp} has a unique solution.
\item[{\rm (ii)}] If $m=1$, then $N=0$ and \eqref{e:ivp_sys2} has the unique solution $x_W = c_W$. Hence, the original IVP \eqref{e:ivp} has a unique solution.
\item[{\rm (iii)}] If $m\ge 2$, then the DAE in \eqref{e:ivp_sys2} has a solution if and only if $v_{1} \doteq   N^{m-1}c_W\in W^{1,1}$ and
$$
v_{k} \doteq   N^{m-k}c_W + \dot v_{k-1}\in W^{1,1},\quad k=2,\ldots,m-1.
$$
The solution is unique, equals $x_W = c_W + \dot v_{m-1}$, and satisfies $Nx_W = v_{m-1}$. Hence, the IVP \eqref{e:ivp} has a \braces{unique} solution if and only if $v_1,\ldots,v_{m-1}\in W^{1,1}$ and $v_{m-1}(0) = Nx_W^0$.
\end{enumerate}
\end{proposition}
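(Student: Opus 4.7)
The plan is to handle the three index cases separately, reducing everything to the subsystem \eqref{e:ivp_sys2} on $W$ since \eqref{e:ivp_sys1} is a standard linear ODE with a unique solution in all cases.

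For (i), $m=0$ forces $W = W_0 = \ker T^0 = \{0\}$, so \eqref{e:quasi-weierstrass} collapses to $E = S$, which is invertible, and \eqref{e:ivp} is an ODE IVP. For (ii), $m=1$ means $T_W = 0$, hence $N = T_W(\mu T_W - I_W)^{-1} = 0$, so \eqref{e:ivp_sys2} degenerates to the algebraic equation $x_W = c_W$; the compatibility $Nx_W(0) = 0 = Nx_W^0$ is then automatic.

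The substance lies in (iii), where the plan is to peel off the DAE layer by layer using $N^m = 0$. Starting from a putative solution $x_W$, applying $N^{m-1}$ to $\tfrac{d}{dt}(Nx_W) = x_W - c_W$ annihilates the left-hand side and yields $N^{m-1}x_W = N^{m-1}c_W = v_1$, so $v_1\in W^{1,1}$ is already necessary in order to iterate. Proceeding by induction on $k$, I will apply $N^{m-k}$ to the DAE and substitute the previously established $N^{m-k+1}x_W = v_{k-1}$ to obtain $N^{m-k}x_W = N^{m-k}c_W + \dot v_{k-1} = v_k$, where each step requires $v_{k-1}\in W^{1,1}$ so that the differentiation is meaningful. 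Setting $k=m-1$ gives $Nx_W = v_{m-1}$, and then the original DAE forces $x_W = c_W + \dot v_{m-1}$, which simultaneously establishes uniqueness and translates the initial condition $Nx_W(0) = Nx_W^0$ into $v_{m-1}(0) = Nx_W^0$.

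For the converse I will verify that $x_W := c_W + \dot v_{m-1}$ solves \eqref{e:ivp_sys2} whenever $v_1,\ldots,v_{m-1}\in W^{1,1}$. The crux is the telescoping identity $Nv_k = v_{k-1}$ for $k = 2,\ldots,m-1$, which I will prove by induction: the base case $k=2$ gives $Nv_2 = N^{m-1}c_W + \tfrac{d}{dt}(Nv_1) = v_1 + \tfrac{d}{dt}(N^m c_W) = v_1$, and the inductive step, assuming $Nv_{k-1} = v_{k-2}$, gives $Nv_k = N^{m-k+1}c_W + \tfrac{d}{dt}(Nv_{k-1}) = N^{m-k+1}c_W + \dot v_{k-2} = v_{k-1}$. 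With this identity in hand, $Nx_W = Nc_W + N\dot v_{m-1} = Nc_W + \dot v_{m-2} = v_{m-1} \in W^{1,1}$, and consequently $\tfrac{d}{dt}(Nx_W) = \dot v_{m-1} = x_W - c_W$, as required. The main obstacle is the careful bookkeeping of iterated derivatives and the telescoping step; beyond that, everything is forced by the nilpotency of $N$ and the defining recursion for the $v_k$.
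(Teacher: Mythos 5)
Your proof is correct and follows essentially the same route as the paper: items (i) and (ii) are immediate from the quasi-Weierstra\ss{} reduction, and for (iii) you apply powers of the nilpotent $N$ to the DAE to force $N^{m-k}x_W=v_k$ (necessity) and use the telescoping identity $Nv_k=v_{k-1}$ to verify that $x_W=c_W+\dot v_{m-1}$ is a solution (sufficiency). The only minor deviation is that you obtain uniqueness directly from the forced formula $x_W=c_W+\dot v_{m-1}$, whereas the paper argues separately via the homogeneous equation $\tfrac{d}{dt}Nw=w$ for the difference of two solutions; both amount to the same computation.
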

\begin{proof}
The assertions (i) and (ii) are immediate. So, let $m\ge 2$. Assume $v_1,\ldots,v_{m-1}\in W^{1,1}$. We have $Nv_{1} = 0$ and thus $Nv_{2} = N^{m-1}c_W + N\dot v_{1} = v_{1}$. Also, $Nv_3 = N^{m-2}c_W + N\dot v_2 = v_2$ and so on until we arrive at $Nv_{m-1} = v_{m-2}$. Therefore, if we set $x_W \doteq   c_W + \dot v_{m-1}$,
$$
\tfrac d{dt}Nx_W = \tfrac d{dt}N(c_W + \dot v_{m-1}) = \tfrac d{dt}(Nc_W + \dot v_{m-2}) = \dot v_{m-1} = x_W-c_W.
$$
To see that the solution is unique, let $u$ and $v$ be two solutions. Then $w = u-v$ satisfies $\frac d{dt}Nw = w$. Applying $N^{m-1}$ to this equation gives $N^{m-1}w = 0$. Thus, applying $N^{m-2}$ to the equation yields $N^{m-2}w = 0$ and so on, so that finally we obtain $w=0$.

Conversely, let $x_W$ be a solution of the DAE. Again, if we subsequently apply $N^k$, $k=m-1,\ldots,1$, to the DAE, we see that  $v_1 = N^{m-1}c_W = N^{m-1}x_W\in W^{1,1}$ and $v_k = N^{m-k}c_W + \dot v_{k-1} = N^{m-k}x_W\in W^{1,1}$ for $k=2,\ldots,m-1$.
\end{proof}

\section{Dissipative Hamiltonian matrices and pencils}\label{a:matpencils}
In this section of the Appendix we analyze and characterize the class of the pencils $P(s) = sE-(J-R)Q$ which correspond to the DAEs in the port-Hamiltonian systems we consider in this paper. The section is separated into two parts, where the second builds upon the first in which we restrict the analysis to the subclass where $E=I$.

\subsection{Dissipative Hamiltonian matrices}\label{a:diss-ham-mat}
In what follows we analyze and characterize the class of matrices of the form $A = (J-R)Q$ with $J,R,Q\in\R^{n\times n}$ s.t.
\begin{equation}\label{e:phODE_matrix_conditions2}
	J = -J^\top, \quad R=R^\top\ge0,\quad Q=Q^\top\ge 0.
\end{equation}

\begin{definition}
	We say that a matrix $A\in\R^{n\times n}$ is {\em dissipative Hamiltonian} if it admits a representation $A = (J-R)Q$, where $J,R,Q\in\R^{n\times n}$ are as in \eqref{e:phODE_matrix_conditions2}. If there exists such a representation of $A$ in which $Q$ is positive definite, we say that the dissipative Hamiltonian matrix $A$ is {\em non-degenerate}.
\end{definition}
Our first observation is that the set of dissipative Hamiltonian matrices is invariant under similarity transforms. Indeed, for any invertible $S\in\R^{n\times n}$ we have
$$
S\big[(J-R)Q\big]S^{-1} = (SJS^\top-SRS^\top)(S^{-\top}QS^{-1}).
$$
The next result shows that being dissipative Hamiltonian is a pure spectral property.
\newpage
\begin{theorem}\label{t:ph-charac}
	A matrix $A\in\R^{n\times n}$ is dissipative Hamiltonian if and only if all of the following conditions are satisfied:
	\begin{enumerate}[{\rm (i)}]
		\item $\sigma(A)\subset\{\la\in\C : \Re\la\le 0\}$.
		\item $\ker\big((A-i\alpha)^2\big) = \ker(A-i\alpha)$ for $\alpha\in\R\backslash\{0\}$.
		\item $\ker A^3 = \ker A^2$.
	\end{enumerate}
	If $A = (J-R)Q$, then for $\alpha\in\R\backslash\{0\}$ we have
	\begin{equation}\label{e:ialpha}
		\ker\big((A-i\alpha)^2\big) = \ker(A-i\alpha) = \ker(JQ-i\alpha)\cap\ker(RQ).
	\end{equation}
	Moreover, $\ker A = \ker(JQ)\cap\ker(RQ)$ and
	\begin{equation}\label{e:atzero}
		\ker A^3 = \ker A^2 = \ker(QJQ)\cap\ker(RQ).
	\end{equation}
\end{theorem}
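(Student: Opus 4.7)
The plan is to prove both directions, with the necessity argument simultaneously establishing the explicit kernel descriptions \eqref{e:ialpha} and \eqref{e:atzero}. The workhorse for necessity is that when $A = (J-R)Q$, the matrices $QJQ$ and $QRQ$ are respectively skew-Hermitian and Hermitian-positive-semi-definite, so that $x^*QJQx\in i\R$ and $x^*QRQx\ge 0$ for every $x\in\C^n$; equivalently, $A^\top Q + QA = -2QRQ$.

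For condition (i) I would test an eigenpair $(\la,x)$ against $x^*Q$: the identity $\la\,x^*Qx = x^*QAx$ gives $(\Re\la)\,x^*Qx = -x^*QRQx\le 0$, and either $x^*Qx > 0$ yields $\Re\la\le 0$, or $x^*Qx = 0$ forces $Qx=0$, whence $Ax=0$ and $\la = 0$. Specialising to $\la = i\alpha\ne 0$, the same identity forces $R^{\frac 12}Qx = 0$, and therefore $RQx = 0$ and $JQx = i\alpha x$, giving the second equality of \eqref{e:ialpha}. To collapse the generalised eigenspace and obtain (ii), I would let $(A - i\alpha)^2 x = 0$, set $y = (A - i\alpha)x$, and exploit what we just proved for the eigenvector $y$, namely $JQy = i\alpha y$ and $RQy = 0$. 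Substituting $y = (J-R)Qx - i\alpha x$ into $y^*Qy$ and using $QJQy = i\alpha Qy$ (from skew-Hermiticity of $QJQ$) together with $QRQy = 0$, the expression $y^*Qy$ collapses to zero; hence $Qy = 0$, $Ay = 0 = i\alpha y$, and $y = 0$ since $\alpha \neq 0$.

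The treatment of (iii) and \eqref{e:atzero} uses the same template. First, $\ker A = \ker(JQ) \cap \ker(RQ)$ is immediate from the real-part argument. Next, for $\ker A^2 = \ker(QJQ) \cap \ker(RQ)$: if $A^2 x = 0$, set $y = Ax \in \ker A$, so that $JQy = 0$ and $RQy = 0$; then $y^\top Qy = y^\top QAx = y^\top QJQx - y^\top QRQx = 0$, where both summands vanish via the skew/symmetric transposition pushing $QJQ$ and $QRQ$ onto $y$. Hence $Qy = 0$, i.e.\ $QJQx = QRQx$; testing with $x^\top$ and invoking skew-symmetry of $QJQ$ gives $RQx = 0$, and then $QJQx = 0$. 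The reverse inclusion is direct since $QJQx = QRQx = 0$ implies $QAx = 0$, hence $A^2 x = (J-R)QAx = 0$. Finally, $\ker A^3 \subseteq \ker A^2$ follows by one more iteration: given $A^3 x = 0$, set $v = Ax \in \ker A^2$, so $QJQv = 0$ and $RQv = 0$; the same computation yields $v^\top Qv = v^\top QJQx - v^\top QRQx = 0$, whence $Qv = 0$ and $Av = (J-R)Qv = 0$.

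For sufficiency, conditions (i)--(iii) force a real Jordan decomposition of $A$ into three $A$-invariant summands: a Hurwitz part $V_{\rm s}$, a semisimple imaginary part $V_{\rm c}$ (by (ii)), and a nilpotent part $V_0 = \ker A^2$ of nilpotency index at most $2$ (by (iii)). I would build $(J,R,Q)$ block-diagonally in a real basis adapted to this decomposition. On $V_{\rm s}$, Lyapunov theory supplies $Q_s > 0$ with $A_s^\top Q_s + Q_s A_s < 0$, and one reads off $J_s$ and $R_s$ as the skew and negative symmetric parts of $A_s Q_s^{-1}$. On $V_{\rm c}$, the real Jordan form presents $A_c$ as a direct sum of skew blocks $\smat 0{\alpha_j}{-\alpha_j}0$, so $Q_c = I$, $R_c = 0$, $J_c = A_c$ works. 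On $V_0$, each length-two Jordan chain is realised by $Q_0 = \smat 0001$, $R_0 = 0$, $J_0 = \smat 01{-1}0$, with trivial matrices on any residual one-dimensional $\ker A$ directions. The main obstacle lies in this sufficiency step: one must verify that the block-diagonal construction transports back to the standard basis consistently under the \emph{mixed} transformation $(J,R) \mapsto (SJS^\top, SRS^\top)$ and $Q \mapsto S^{-\top} Q S^{-1}$ that accompanies a similarity $A \mapsto SAS^{-1}$---this mixed rule is precisely what preserves all three structural constraints while realising the real Jordan normal form of $A$, and handling the purely imaginary case requires passing from conjugate complex eigenvectors to a real basis in which the $2\times 2$ blocks come out genuinely skew-symmetric.
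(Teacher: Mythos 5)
Your proof is correct. The necessity half (conditions (i)--(iii) together with the kernel identities \eqref{e:ialpha} and \eqref{e:atzero}) follows the same computations as the paper, which carries them out once for $A=(J-R)Q$ directly and routes (i)--(iii) through the pencil version (Theorem \ref{t:ph_pencil_charac} / Proposition \ref{p:regular} with $E=I$); your direct treatment is slightly more self-contained but not essentially different. The sufficiency half, however, genuinely diverges. The paper splits into only two cases: if $\ker A^2=\ker A$ it invokes Carlson's inertia theorem \cite[Corollary III.1]{Carlson63} to produce a positive definite $H$ with $-AH-HA^\top\ge 0$, which handles the Hurwitz part, the semi-simple nonzero imaginary spectrum and any semi-simple zero directions in one stroke (and yields a \emph{non-degenerate} representation, $Q=P^{-1}>0$, which the paper reuses for Corollary \ref{c:non-deg1}); only the $2\times 2$ nilpotent Jordan blocks are split off and realized explicitly, exactly as in your $V_0$ construction. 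You instead decompose into three spectral parts and treat the boundary spectrum by hand: a strict Lyapunov solution on the Hurwitz block (your reading of $J_s,R_s$ from the skew/negative-symmetric parts of $A_sQ_s^{-1}$ is consistent, since $A_sQ_s^{-1}+Q_s^{-1}A_s^\top=Q_s^{-1}(Q_sA_s+A_s^\top Q_s)Q_s^{-1}<0$), skew blocks $\smat{0}{\alpha_j}{-\alpha_j}{0}$ with $Q_c=I$ on the semi-simple imaginary part, and $\smat 0100=\smat 01{-1}0\smat 0001$ plus trivial blocks on $\ker A^2$. What your route buys is elementarity: no inertia theorem beyond standard Lyapunov theory is needed, at the price of an extra real-Jordan-form reduction for the imaginary-axis part and a block-diagonal assembly in which $Q$ is only positive semi-definite; what the paper's route buys is brevity and the sharper non-degenerate statement for the case $\ker A^2=\ker A$. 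The ``mixed'' similarity rule you flag as the main obstacle is exactly the one-line invariance observation the paper records before the theorem, $S[(J-R)Q]S^{-1}=(SJS^\top-SRS^\top)(S^{-\top}QS^{-1})$, so there is no gap there.
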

\begin{proof}
	The necessity of (i)--(iii) is shown in the proof of the more general Theorem \ref{t:ph_pencil_charac} below. Let us prove the formulas for $\ker(A-i\alpha)$ and $\ker A^2$ if $A = (J-R)Q$. To this end, let $\alpha\in\R$ and $x\in\C^n$ such that $Ax = i\alpha x$. Then $\<RQx,Qx\> = -\Re\<(J-R)Qx,Qx\> = 0$ implies $RQx=0$ and thus also $JQx = i\alpha x$. This establishes \eqref{e:ialpha} and $\ker A = \ker(JQ)\cap\ker(RQ)$. To prove \eqref{e:atzero}, let $A^2y = 0$ for some $y\in\R^n$ and set $x \doteq   Ay$. Then $x\in\ker A$ and thus $\<Qx,x\> = \<Qx,(J-R)Qy\> = -\<(J+R)Qx,Qy\> = 0$, so $Qx=0$. In particular, $Q(J-R)Qy = Qx = 0$. Again, $-\<RQy,Qy\> = \Re\<(J-R)Qy,Qy\> = 0$, which implies $RQy = 0$ and hence also $QJQy = 0$.
	
	For the sufficiency part, assume that (i)--(iii) hold. Assume first that $\ker A^2 = \ker A$. Then \cite[Corollary III.1]{Carlson63} implies the existence of a Hermitian positive definite matrix $H\in\C^{n\times n}$ such that $-AH - HA^\top\ge 0$. Note that the complex conjugate of a positive (semi-)definite matrix remains positive (semi-)definite. Hence, $-A\ol H - \ol HA^\top\ge 0$, and thus $R \doteq   -\frac 12(AP + PA^\top)\ge 0$, where $P = H+\ol H\in\R^{n\times n}$ is positive definite. Thus, setting $J \doteq   \frac 12(AP-PA^\top)$ and $Q \doteq   P^{-1}$ gives
	$$
	A = \big(\tfrac 12(AP-PA^\top) + \tfrac 12(AP+PA^\top)\big)P^{-1} = (J-R)Q.
	$$
	In the case $\ker A^2\neq\ker A$, there exists a non-singular matrix $S\in\R^{n\times n}$ such that $SAS^{-1} = A_1\oplus A_2$ (real Jordan form), where $A_1$ is of the form
	$$
	A_1 = \bigoplus_{i=1}^k\mat 0 1 0 0
	$$
	and $A_2$ is real and enjoys the properties (i)--(ii) of $A$ and $\ker A_2^2 = \ker A_2$. By the above reasoning, $A_2$ is dissipative Hamiltonian. Now, as
	$$
	\mat 0 1 0 0 = \Bigg[\underbrace{\mat 0 1 {-1} 0}_{=J} - \underbrace{\mat 0 0 0 0}_{=R}\Bigg]\underbrace{\mat 0 0 0 1}_{=Q},
	$$
	is dissipative Hamiltonian, the same follows for $A_1\oplus A_2 = SAS^{-1}$ and hence for $A$.
\end{proof}

\begin{remark}
	The proof of Theorem \ref{t:ph-charac} shows that in the non-degenerate case the matrix $Q$ can be derived as the inverse of a positive definite solution of the linear matrix inequality $AX+XA^\top\le 0$.
\end{remark}

The following corollaries directly follow from Theorem \ref{t:ph-charac} and its proof.

\begin{corollary}\label{c:non-deg1}
	A matrix $A\in\R^{n\times n}$ is non-degenerate dissipative Hamiltonian if and only if it satisfies {\rm (i)} and {\rm (ii)} in Theorem \rmref{t:ph-charac} and $\ker A^2 = \ker A$.
\end{corollary}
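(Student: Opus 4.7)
The plan is to piggyback on Theorem \ref{t:ph-charac} together with the internal structure of its proof, since the corollary is essentially the ``non-degenerate'' refinement of that theorem. Both implications should drop out with minimal extra work.

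For the ``only if'' direction I would assume $A=(J-R)Q$ with $Q>0$. Conditions (i) and (ii) come for free, since $A$ is in particular dissipative Hamiltonian and Theorem \ref{t:ph-charac} gives them. To verify $\ker A^2=\ker A$, I would invoke the formulas
\begin{align*}
\ker A   &= \ker(JQ)\cap\ker(RQ),\\
\ker A^2 &= \ker(QJQ)\cap\ker(RQ),
\end{align*}
established in Theorem \ref{t:ph-charac}. Because $Q$ is positive definite, hence invertible, $\ker(QJQ)=\ker(JQ)$, so the two right-hand sides coincide. This is the whole point: non-degeneracy of $Q$ collapses the $A^2$-kernel to the $A$-kernel.

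For the ``if'' direction I would observe that conditions (i), (ii) together with $\ker A^2 = \ker A$ are exactly the hypotheses used in the \emph{first case} of the proof of Theorem \ref{t:ph-charac}. That case produces, via \cite[Corollary III.1]{Carlson63}, a Hermitian positive definite $H\in\C^{n\times n}$ with $-AH-HA^\top\ge 0$; then $P\doteq H+\overline{H}\in\R^{n\times n}$ is positive definite and real, so $Q\doteq P^{-1}$ is positive definite, and
\[
A=(J-R)Q,\qquad J\doteq\tfrac12(AP-PA^\top),\quad R\doteq -\tfrac12(AP+PA^\top),
\]
is a non-degenerate dissipative Hamiltonian representation. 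So the only thing to say is that this branch of the existing proof already delivers a positive definite $Q$; the Jordan-block branch (which yielded the singular $Q=\smat 0001$) is simply not needed under the stronger kernel condition.

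I do not expect a real obstacle: the corollary is best viewed as an observation that one half of the Theorem \ref{t:ph-charac} proof produces a strictly better statement than advertised. The only thing worth double-checking is that the identity $\ker(QJQ)=\ker(JQ)$ genuinely uses nothing beyond invertibility of $Q$, and that the $P$ constructed from Carlson's result remains positive definite after symmetrization over $\R$, both of which are immediate.
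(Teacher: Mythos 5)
Your proposal is correct and follows essentially the route the paper intends: the paper states that the corollary ``directly follows from Theorem \ref{t:ph-charac} and its proof,'' and you make this explicit by deriving the only-if direction from the kernel identities \eqref{e:ialpha}--\eqref{e:atzero} with $Q$ invertible, and the if direction from the first branch of the sufficiency proof, which via Carlson's result already yields $Q=P^{-1}>0$. Note also that $\ker A^2=\ker A$ automatically gives condition (iii), so your hypotheses indeed place you in that branch with nothing further to check.
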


\begin{corollary}
	If $A$ is dissipative Hamiltonian, then so is $A^\top$. If, in addition, $A$ is invertible, then also $A^{-1}$ is dissipative Hamiltonian.
\end{corollary}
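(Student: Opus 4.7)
The natural approach is to reduce both assertions to the spectral characterisation furnished by Theorem~\ref{t:ph-charac}, which asserts that a matrix is dissipative Hamiltonian if and only if (i) $\sigma(A)\subset\{\la:\Re\la\le 0\}$, (ii) every non-zero imaginary eigenvalue is semisimple, i.e.\ $\ker((A-i\alpha)^2)=\ker(A-i\alpha)$ for $\alpha\in\R\setminus\{0\}$, and (iii) the Jordan blocks at $0$ have size at most two, i.e.\ $\ker A^3=\ker A^2$. Once (i)--(iii) are verified for $A^\top$ and $A^{-1}$, the theorem produces the desired representations.

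For $A^\top$, observe that the Jordan canonical forms of $A$ and $A^\top$ coincide up to the arrangement of blocks. In particular, $\sigma(A^\top)=\sigma(A)$ and $\dim\ker(A^\top-\mu)^k=\dim\ker(A-\mu)^k$ for every $\mu\in\C$ and every $k\ge 1$. Hence conditions (i)--(iii) pass verbatim from $A$ to $A^\top$, and Theorem~\ref{t:ph-charac} yields a representation $A^\top=(J'-R')Q'$ of the required form.

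For $A^{-1}$, under the additional assumption that $A$ is invertible, the key observation is the factorisation
\[
A^{-1}-\mu I = -\mu A^{-1}\bigl(A-\mu^{-1}I\bigr),\qquad \mu\in\C\setminus\{0\}.
\]
Since $A^{-1}$ commutes with $A-\mu^{-1}I$ and is invertible, raising to the $k$-th power gives $(A^{-1}-\mu)^k=(-\mu)^k A^{-k}(A-\mu^{-1})^k$, so that $\ker(A^{-1}-\mu)^k=\ker(A-\mu^{-1})^k$. Condition (iii) for $A^{-1}$ is automatic because $0\notin\sigma(A^{-1})$. Condition (ii) for $A^{-1}$ at $\mu=i\alpha$ with $\alpha\neq 0$ translates, via $\mu^{-1}=-i/\alpha\in i\R\setminus\{0\}$, into condition (ii) for $A$, which holds by hypothesis. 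For condition (i) we use the elementary fact that $\Re(1/(a+ib))=a/(a^2+b^2)\le 0$ whenever $a\le 0$ and $a+ib\neq 0$, which shows $\sigma(A^{-1})\subset\{\la:\Re\la\le 0\}$. Applying Theorem~\ref{t:ph-charac} to $A^{-1}$ then yields a dissipative Hamiltonian representation.

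The argument is essentially routine once the spectral characterisation is in hand; the only step requiring mild care is the bookkeeping that translates $\ker$-equalities for $A^{-1}$ into those for $A$ via the factorisation above. I do not anticipate a genuine obstacle, and in particular no explicit construction of $J,R,Q$ for $A^\top$ or $A^{-1}$ is needed.
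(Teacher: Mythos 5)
Your proposal is correct and follows the paper's intended route: the corollary is stated there as a direct consequence of the spectral characterisation in Theorem~\ref{t:ph-charac}, exactly as you argue via the shared Jordan structure of $A$ and $A^\top$ and the spectral mapping $\lambda\mapsto\lambda^{-1}$ for $A^{-1}$. No gaps; the kernel bookkeeping through the factorisation $(A^{-1}-\mu)^k=(-\mu)^kA^{-k}(A-\mu^{-1})^k$ is sound.
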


\subsection{Dissipative Hamiltonian pencils}\label{a:diss-ham-pencil}
Next, we extend our analysis of dissipative Hamiltonian matrices in Appendix \ref{a:diss-ham-mat} to matrix pencils.

\begin{definition}\label{d:diss_ham}
	We say that a matrix pencil $P(s) = sE-A$ with $E,A\in\R^{n\times n}$ is {\em dissipative Hamiltonian} if $A = (J-R)Q$ with matrices $J,R,Q\in\R^{n\times n}$ satisfying
	\begin{equation}\label{e:matrix_conditions2}
		J=-J^\top,\quad R=R^\top\geq 0,\quad Q^\top E = E^\top Q\geq 0.
	\end{equation}
	A dissipative Hamiltonian pencil is called {\em non-degenerate} if $Q$ can be chosen invertible.
\end{definition}

Obviously, a real square matrix $A$ is dissipative Hamiltonian if and only if the pencil $sI-A$ is dissipative Hamiltonian. The class of dissipative Hamiltonian pencils is invariant under multiplication with invertible matrices from the left or from the right. Indeed, if $U,V\in\R^{n\times n}$ are invertible, then
$$
U\big[sE - (J-R)Q\big]V = \la (UEV) - \big(UJU^\top - URU^\top\big)\big(U^{-\top}QV\big)
$$
and $(UEV)^\top(U^{-\top}QV) = V^\top E^\top QV\ge 0$.

A number $\la\in\C$ is called an {\em eigenvalue} of the pencil $P(s) = sE-A$ if there exists a vector ({\em eigenvector}) $x\neq 0$ such that $P(\la)x=0$. The point $\la=\infty$ is an eigenvalue of $P$ if there exists $x\neq 0$ such that $Ex=0$. If $\la\in\C\cup\{\infty\}$ is an eigenvalue of $P$, then a tuple $(x_0,x_1,\ldots,x_k)\in(\C\backslash\{0\})^{(k+1)n}$ is called a {\em chain} of $P$ at $\la$ of length $k+1$ if
\begin{align*}
	\la\in\C\,&:\quad(A-\la E)x_0 = 0,\quad (A-\la E)x_1 = Ex_0,\;\ldots\;,(A-\la E)x_k = Ex_{k-1},\\
	\la = \infty\,&: \quad Ex_0=0,\quad Ex_1 = Ax_0,\;\ldots\,,Ex_k = Ax_{k-1}.
\end{align*}
An eigenvalue of $P$ is called {\em semi-simple}, if there are no chains of length greater than one associated to it. The {\em index} of the pencil $P$ is defined as the maximal length of chains associated to $\la = \infty$. 
The next theorem is the main result of this subsection.

\begin{theorem}\label{t:ph_pencil_charac}
	Let $E,A\in\R^{n\times n}$, $P(s) = sE-A$, and $P'(s) = sA-E$. Assume that $P$ is regular. Then $P$ is dissipative Hamiltonian if and only if it has all of the following spectral properties:
	\begin{enumerate}[{\rm (i)}]
		\item $\Re\la\le 0$ for each eigenvalue $\la\in\C$ of $P$.
		\item Non-zero imaginary eigenvalues of $P$ are semi-simple.
		\item The index of $P$ is at most two.
		\item The index of $P'$ is at most two.
	\end{enumerate}
\end{theorem}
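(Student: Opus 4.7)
The plan is to split the proof into necessity (the conditions follow from a dissipative Hamiltonian representation) and sufficiency (any regular pencil satisfying (i)-(iv) admits such a representation). For necessity I work directly from $A=(J-R)Q$ with \eqref{e:matrix_conditions2}, generalizing the scheme from the proof of Theorem~\ref{t:ph-charac}. The workhorse identity is
\[
\Re\bigl\langle (A-\lambda E)x,\,Qx\bigr\rangle \;=\; -\langle RQx,Qx\rangle - \Re(\lambda)\,\langle E^\top Q x, x\rangle,
\]
which is a consequence of $J^\top=-J$. Applied to an eigenvector at finite $\lambda$ it gives (i) after a case split on $\langle E^\top Qx,x\rangle$: in the generic case $\langle E^\top Qx,x\rangle>0$ the inequality is immediate, and in the degenerate case $E^\top Qx=0$ one first deduces $RQx=0$ and $JQx=\lambda Ex$, after which regularity of the pencil (used to forbid a nontrivial common kernel of $E$ and $A$) forces $\lambda=0$.

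For (ii) I start with a length-$2$ chain $(x_0,x_1)$ at $\lambda=i\alpha\neq 0$: the above analysis at $x_0$ yields $RQx_0=0$, $JQx_0=i\alpha Ex_0$ and $E^\top Q x_0=0$. Pairing $(A-i\alpha E)x_1=Ex_0$ with $Qx_0$ and unfolding using these identities collapses to a relation that contradicts $Ex_0\neq 0$, modulo a small amount of bookkeeping identical in flavor to the proof of Proposition~\ref{p:decomp}. For (iii), assume a length-$3$ chain at $\infty$, namely $Ex_0=0$ and $Ex_j=Ax_{j-1}$ for $j=1,2$. From $Ex_0=0$ and $E^\top Q\ge 0$ I get $E^\top Qx_0=0$; testing $Ex_1=Ax_0$ against $Qx_0$ yields $RQx_0=0$, hence $Ex_1=JQx_0$; then $\langle Ex_2,Qx_0\rangle=0$ iterated with skew/symmetric splitting yields $E^\top Qx_1=0$ and $RQx_1=0$; after one further iteration the constraints accumulate enough that $x_0$ can be absorbed into the lower part of the decomposition \eqref{e:decVW} and the chain shortened to length two, contradicting maximality. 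Condition (iv) is proved in parallel: since a chain at $\infty$ of $P'=sA-E$ is literally a chain at $0$ of $P$, the analysis is the same argument with the roles of $E$ and $A$ swapped, and in the special case $E=I$ it matches Theorem~\ref{t:ph-charac}'s requirement $\ker A^3=\ker A^2$.

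For sufficiency I use the quasi-Weierstra{\ss} decomposition of Appendix~\ref{a:dae_solutions}: up to left and right multiplication by invertible real matrices,
\[
P(s)\;\sim\;(sI_V-C)\oplus(sN-I_W),\qquad N\text{ nilpotent.}
\]
The class of dissipative Hamiltonian pencils is preserved under such transformations, since $U(J-R)QV=(UJU^\top-URU^\top)(U^{-\top}QV)$ and $V^\top(E^\top Q)V=(UEV)^\top(U^{-\top}QV)\ge 0$. Hence it is enough to realize each direct summand. For $sI_V-C$, conditions (i), (ii), and (iv) translate, via Theorem~\ref{t:ph-charac} applied to $C$, into $C$ being a dissipative Hamiltonian matrix, so $C=(J_V-R_V)Q_V$ with $Q_V=Q_V^\top\ge 0$; since $E=I_V$, \eqref{e:matrix_conditions2} is satisfied. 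For $sN-I_W$, condition (iii) forces $N^2=0$, hence $N$ is a direct sum of scalar zero blocks and $2\times 2$ nilpotent Jordan blocks. Each scalar block $E=0$, $A=1$ is realized by $J=0$, $R=1$, $Q=-1$, and each $2\times 2$ block (after permuting rows/columns) is realized by
\[
E=\smat 0010,\quad Q=\smat{0}{-1}{1}{0},\quad J=\smat{0}{1}{-1}{0},\quad R=0,
\]
which verifies \eqref{e:matrix_conditions2} and $(J-R)Q=I_2$. The block-diagonal direct sum of these pieces is dissipative Hamiltonian; pulling back through the invertible transformation gives the required representation of $P$.

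The step I expect to be most delicate is (iii): showing that a length-$3$ chain at $\infty$ is inconsistent with the dissipative Hamiltonian structure requires iterated use of the skew/symmetric decomposition of $A$ combined with the semi-definiteness of $E^\top Q$, and the bookkeeping must interlace the successive chain vectors $x_0,x_1,x_2$ without any one relation being decisive by itself. The degenerate case $E^\top Qx=0$ in (i) and (ii) is the secondary pain point, as the main sesquilinear identity becomes uninformative there and one must rely on regularity of the pencil to rule out the offending eigenvalues.
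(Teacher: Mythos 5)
Your sufficiency argument is essentially the paper's own: reduce via the quasi-Weierstra{\ss} form to $(sI_V-C)\oplus(sN-I_W)$, use Theorem~\ref{t:ph-charac} (conditions (i), (ii), (iv) giving exactly (i)--(iii) there for $C$, since chains of $P$ at $\la=0$ are chains of $P'$ at $\infty$), and realize the nilpotent part blockwise; your explicit $2\times2$ realization is a permuted variant of the one in the paper and checks out, as does the invariance of the class under left/right multiplication by invertible matrices. That half is correct.

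The necessity half has a genuine gap, and it sits exactly where you flag the ``secondary pain point.'' In the degenerate cases your chain computations only ever produce memberships such as
\begin{equation*}
x\in\ker(E^\topp Q)\cap\ker(RQ)\cap\ker P(\la)
\qquad\text{or}\qquad
x_0\in\ker E\cap\ker(RQ)\cap\ker(Q^\topp JQ),
\end{equation*}
and you propose to finish by invoking ``regularity forbids a nontrivial common kernel of $E$ and $A$.'' But none of these memberships puts the vector in $\ker E\cap\ker A$: in case (i), for instance, you only know $E^\topp Qx=0$, $RQx=0$ and $JQx=\la Ex$, with $Ex$ and $Ax$ possibly nonzero, so the common-kernel argument does not apply. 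To conclude $x=0$ you must first convert regularity into kernel-triviality statements of the above type; in the paper this is done in Proposition~\ref{p:regular} (conditions (b) and (c)), whose proof hinges on Lemma~\ref{l:QE}: under $\ker E\cap\ker Q=\{0\}$ one has $\ker(E^\topp Q)=\ker E\oplus\ker Q$ and $P(s)$ maps $\ker Q$ bijectively onto $\ker Q^\topp$, which yields the identity \eqref{e:kerQ}, i.e.\ $\ker(E^\topp Q)\cap\ker(RQ)\cap\ker(Q^\topp JQ)=\ker Q$; only then does one get $Qx=0$, hence $Ax=0$ and $Ex=0$, and only at that point does regularity kill $x$. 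Nothing in your sketch supplies this dimension-count/bijectivity step, and it is not routine bookkeeping. Likewise, your treatment of (iii) --- ``the constraints accumulate enough that $x_0$ can be absorbed into the lower part of the decomposition \eqref{e:decVW} and the chain shortened to length two, contradicting maximality'' --- is not an argument: there is no maximality to contradict, and what must actually be shown is that a length-$3$ chain at $\infty$ forces $x_0=0$, which again requires the kernel condition $\ker E\cap\ker(RQ)\cap\ker(Q^\topp JQ)=\{0\}$ derived from regularity. The same remark applies to (iv): swapping the roles of $E$ and $A$ is not a symmetry of the structure \eqref{e:matrix_conditions2} (that $P'$ is again dissipative Hamiltonian is a corollary of the theorem, not an input), so the chain computation at $\la=0$ must be carried out separately, as in the paper. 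Either supply a lemma of the type of Lemma~\ref{l:QE} and the resulting characterizations of Proposition~\ref{p:regular}, or cite the necessity results of Mehl--Mehrmann--Wojtylak as the paper does; as written, the necessity direction does not close.
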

\begin{proof}
	The necessity of (i)--(iii) has already been proved in \cite{Mehl2018} (independently of whether $P$ is regular or not). The fact that also (iv) is necessary follows from \cite[Theorem 6.1]{Mehl2018} and the proof of \cite[Corollary 6.2]{Mehl2018}. 
	For the sufficiency part, let (i)--(iv) be satisfied. By \cite{Berger2012} there exist invertible matrices $U,V\in\R^{n\times n}$ such that
	$$
	UP(s)V = \mat{sI - A'}00{sN - I},
	$$
	where $A'$ and $N$ are square matrices with $N$ being nilpotent.
	By (i), (ii), and (iv), the matrix $A'$ enjoys the properties (i)--(iii) in Theorem \ref{t:ph-charac} and is therefore dissipative Hamiltonian. Hence, the pencil $sI-A'$ is dissipative Hamiltonian. Furthermore, (iii) implies that the nilpotency index of $N$ is at most $2$, that is, $N^2=0$. Therefore, $N$ is similar to ${\bf 0}\oplus N_1$, where
	$$
	N_1 = \bigoplus_{i=1}^k\mat 0100.
	$$
	Obviously, $s\cdot {\bf 0} - I = s\cdot{\bf 0} - ({\bf 0} - I)(-I)$ is dissipative Hamiltonian. Furthermore, we may write
	$$
	s\mat 0100 - \mat 1001 = s\underbrace{\mat 0100}_{E_i} - \Bigg[\underbrace{\mat 0{-1}10}_{J_i} - \underbrace{\mat 0000}_{R_i}\Bigg]\underbrace{\mat 01{-1}0}_{Q_i},
	$$
	where $E_i^\top Q_i = Q_i^\top E_i\ge 0$. This proves that $sE-A$ is dissipative Hamiltonian.
\end{proof}

\begin{corollary}
	If the regular pencil $P$ is dissipative Hamiltonian, then so is $P'$.
\end{corollary}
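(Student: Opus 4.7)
The plan is to invoke Theorem~\ref{t:ph_pencil_charac} in both directions. The hypothesis that $P$ is (regular and) dissipative Hamiltonian gives us that $P$ satisfies the four spectral conditions (i)--(iv), and I would then verify the same four conditions for $P'$ to conclude that $P'$ is dissipative Hamiltonian.

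Before that, I would establish that $P'$ is itself regular. This follows because $\det(sA-E)$ and $\det(sE-A)$ are related by the reciprocal-polynomial transformation $s\mapsto 1/s$ (up to a factor of $\pm s^n$), so one is identically zero iff the other is.

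The tautology $(P')' = sE - A = P$ then immediately yields conditions (iii) and (iv) for $P'$: condition (iii) for $P$ (``index of $P$ is at most two'') is literally condition (iv) for $P'$, and condition (iv) for $P$ (``index of $P'$ is at most two'') is literally condition (iii) for $P'$. So these two conditions come for free from the labelling of the theorem.

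For (i) and (ii) for $P'$, I would exploit the identity $E - \mu A = -\mu\bigl(A - (1/\mu)E\bigr)$, valid for $\mu\ne 0$, which establishes a bijection between nonzero finite eigenvalues $\mu$ of $P'$ and nonzero finite eigenvalues $\lambda=1/\mu$ of $P$ with identical eigenvectors. Condition (i) then transfers to $P'$ because $\Re(1/\lambda)=\Re\lambda/|\lambda|^2$ has the same sign as $\Re\lambda$, while the remaining case $\mu=0$ (which is an eigenvalue of $P'$ precisely when $A$ is singular) trivially satisfies $\Re 0\le 0$. For (ii), given a chain $(x_0,x_1)$ of length two at a nonzero purely imaginary eigenvalue $\mu$ of $P'$, so $(E-\mu A)x_0=0$ and $(E-\mu A)x_1=Ax_0$, the first relation yields $Ax_0=\lambda E x_0$ with $\lambda=1/\mu$, and substituting into the second gives $(A-\lambda E)x_1 = -\lambda^2 E x_0$; rescaling by $\tilde x_1\doteq -\lambda^{-2}x_1$ then produces a chain $(x_0,\tilde x_1)$ of length two for $P$ at the nonzero purely imaginary eigenvalue $\lambda$, contradicting condition (ii) for $P$.

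The only mildly technical step is the chain-rescaling computation just above; the remainder of the argument is structural bookkeeping exploiting the symmetric roles that $E$ and $A$ play in the four conditions of Theorem~\ref{t:ph_pencil_charac}. Once (i)--(iv) are verified for $P'$, the theorem applied to $P'$ yields the desired conclusion.
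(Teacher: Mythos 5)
Your argument is correct and is exactly the route the paper intends: the corollary is left as an immediate consequence of Theorem~\ref{t:ph_pencil_charac}, whose conditions (i)--(iv) are symmetric under interchanging $P$ and $P'$ once one supplies the regularity of $P'$, the reciprocal-eigenvalue correspondence, and the chain-rescaling computation you carry out. Only a harmless slip in a parenthetical: $\mu=0$ is an eigenvalue of $P'(s)=sA-E$ precisely when $E$ (not $A$) is singular, which does not affect your conclusion since $\Re 0\le 0$ holds regardless.
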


Our next goal is to find a characterization for the regularity of dissipative Hamiltonian pencils $P(s) = sE - (J-R)Q$. For this, let us first study the connection between $E$ and $Q$. Note that $E^\topp Q = Q^\topp E$ implies
\begin{equation}\label{e:kerEQ}
	E\ker Q\subset\ker Q^\topp 
	\qquad\text{and}\qquad
	Q\ker E\subset\ker E^\topp .
\end{equation}
Hence, for every $s\in\C$ we have
\begin{equation}\label{e:kerPs}
	P(s)\ker Q\,\subset\,\ker Q^\topp 
	\qquad\text{and}\qquad
	P(s)\ker E\,\subset\,(J-R)\ker E^\topp .
\end{equation}
\begin{lemma}\label{l:QE}
	If $\ker E\cap\ker Q = \{0\}$, then the following statements hold:
	\begin{enumerate}
		\item[{\rm (i)}]   $\ker(E^\topp Q) = \ker E\oplus\ker Q$.
		\item[{\rm (ii)}]  Both $E$ and $P(s)$ map $\ker Q$ bijectively onto $\ker Q^\topp $, $s\neq 0$.
		\item[{\rm (iii)}] We have $\ker Q^\topp  = E\ker Q = \im E\cap\ker Q^\topp$ and in particular, $\ker Q^\topp \subset\im E$.
	\end{enumerate}
\end{lemma}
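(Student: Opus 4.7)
The plan is to prove the three statements in the reverse of the listed order, namely (iii) first, then (ii), then (i), because (iii) is the linear-algebraic core that feeds directly into the other two. The only serious input beyond elementary rank-nullity is the inclusion $E\ker Q \subseteq \ker Q^\top$, which is already noted in \eqref{e:kerEQ} just above the lemma.

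For (iii), the inclusions $E\ker Q \subseteq \im E \cap \ker Q^\top \subseteq \ker Q^\top$ are trivial. To upgrade them to equalities I would use a dimension count: the hypothesis $\ker E \cap \ker Q = \{0\}$ says precisely that $E|_{\ker Q}$ is injective, so $\dim E(\ker Q) = \dim \ker Q$; and since $\rank Q = \rank Q^\top$, we also have $\dim \ker Q^\top = n - \rank Q = \dim \ker Q$. So $E\ker Q = \ker Q^\top$, and the middle space $\im E \cap \ker Q^\top$ is squeezed between them. The ``in particular'' clause then follows for free. For (ii), surjectivity of $E : \ker Q \to \ker Q^\top$ is exactly (iii), and injectivity is the standing hypothesis. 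The statement for $P(s)$ reduces to the statement for $E$, because for $x \in \ker Q$ we have $P(s)x = sEx - (J-R)Qx = sEx$, so $P(s)|_{\ker Q} = sE|_{\ker Q}$, which is an isomorphism onto $\ker Q^\top$ for $s \neq 0$.

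For (i), the inclusion $\ker E + \ker Q \subseteq \ker(E^\top Q)$ is immediate, and the sum is direct by hypothesis. For the converse, given $x \in \ker(E^\top Q)$ I read off directly that $Qx \in \ker E^\top$, and using the symmetry $E^\top Q = Q^\top E$ from \eqref{e:matrix_conditions2} also $Ex \in \ker Q^\top$. By (iii), $Ex \in \ker Q^\top = E(\ker Q)$, so there exists $v \in \ker Q$ with $Ex = Ev$. Setting $u \doteq x - v$ gives $Eu = 0$, hence $x = u + v$ with $u \in \ker E$ and $v \in \ker Q$. No real obstacle is anticipated; the main care is simply to invoke the dimension count in (iii) rather than mistakenly treating $E\ker Q$ and $\ker Q^\top$ as tautologically equal, and to use the symmetry of $E^\top Q$ in (i) to transfer the kernel condition on $Qx$ into a kernel condition on $Ex$ that (iii) can handle.
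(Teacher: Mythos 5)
Your proof is correct; every step checks out. It is, however, organized differently from the paper's. The paper proves (i) first and on its own, by applying rank--nullity to the map $T:\ker(Q^\top E)\to\ker Q^\top\cap\im E$, $Tx=Ex$ (whose kernel is exactly $\ker E$), which gives $\dim\ker(E^\top Q)\le\dim\ker E+\dim\ker Q^\top=\dim\ker E+\dim\ker Q$ and hence equality with the direct sum; then (ii) is proved essentially as you do it (injectivity of $E|_{\ker Q}$ and of $P(s)|_{\ker Q}$ from the hypothesis, since $P(s)x=sEx$ on $\ker Q$, plus surjectivity from $\dim\ker Q=\dim\ker Q^\top$), and (iii) is simply read off from (ii). You reverse the logical order: you establish the surjectivity statement (iii) first by the same dimension count, deduce (ii) from it, and then use (iii) constructively in (i), choosing $v\in\ker Q$ with $Ev=Ex$ and setting $u=x-v\in\ker E$. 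Both routes rest on the same two ingredients---injectivity of $E|_{\ker Q}$ and $\dim\ker Q=\dim\ker Q^\top$---so neither is more general; the paper's (i) is independent of (ii)/(iii) and purely dimensional, whereas yours produces an explicit decomposition of an element of $\ker(E^\top Q)$, which is arguably more transparent. One minor remark: in your proof of (i) the observation $Qx\in\ker E^\top$ is never used; the only fact needed is $Q^\top Ex=E^\top Qx=0$, i.e.\ $Ex\in\ker Q^\top$.
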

\begin{proof}
	Clearly, $\ker E\subset\ker(Q^\topp E)$ and $\ker Q\subset\ker(E^\topp Q) = \ker(Q^\topp E)$. Hence, $\ker E\oplus\ker Q\subset\ker(Q^\topp E).$ Now, consider the linear map $T : \ker(Q^\topp E)\to\ker Q^\topp \cap\im E$, defined by $Tx = Ex$, $x\in\ker(Q^\topp E)$. It is easily seen that $\ker T = \ker E$ and $\im T = \ker Q^\topp \cap\im E$. Hence, (i) follows from
	\begin{align*}
		\dim\ker(Q^\topp E)
		&= \dim\ker T + \dim\im T = \dim\ker E + \dim\!\big(\!\ker Q^\topp \cap\im E)\\
		&\le \dim\ker E + \dim\ker Q^\topp = \dim\ker E + \dim\ker Q.
	\end{align*}
	From \eqref{e:kerEQ} and \eqref{e:kerPs} we see that $E$ and $P(s) = sE - (J-R)Q$ indeed map $\ker Q$ into $\ker Q^\topp $. Moreover, if $Ex=0$ or $P(s)x = 0$, $x\in\ker Q$, then $sEx=0$, hence $x\in\ker E\cap\ker Q = \{0\}$. So, $E|_{\ker Q}$ and $P(s)|_{\ker Q}$ are injective and the claim follows from $\dim\ker Q = \dim\ker Q^\topp $. Finally, (iii) follows immediately from (ii).
\end{proof}

\begin{proposition}\label{p:regular}
	The following are equivalent:
	\begin{enumerate}[{\rm (a)}]
		\item The dissipative Hamiltonian pencil $P(s) = sE - (J-R)Q$ is regular.
		\item $\ker(Q^\topp JQ)\,\cap\,\ker(RQ)\,\cap\,\ker(E) = \{0\}$.
		\item $\ker(Q^\topp JQ)\,\cap\,\ker(RQ)\,\cap\,\ker(E^\top Q) = \ker Q$ and $\ker E\cap\ker Q = \{0\}$.
	\end{enumerate}
\end{proposition}
\begin{proof}
	(a)$\Sra$(b). Let $P$ be regular. Then there is $s > 0$ such that $P(s)$ is invertible. In particular, $\ker E\cap\ker Q = \{0\}$. If $x\in\ker(Q^\topp JQ)\cap\ker(RQ)\cap\ker(E)$, then $P(s)x = -JQx\in\ker Q^\topp $. By Lemma \ref{l:QE} (ii), we find $v\in\ker Q$ such that $P(s)v = P(s)x$, which implies $x = v\in\ker Q$. But then $P(s)x = -JQx = 0$, hence $x=0$.
	
	(b)$\Sra$(c). Let $Q^\top JQx = RQx = E^\top Qx = 0$. By Lemma \ref{l:QE} (i), we may write $x = u+v$ with $u\in\ker E$ and $v\in\ker Q$. From $Qx = Qu$ we conclude that $Q^\topp JQu = RQu = Eu = 0$. Hence, $u=0$ by (b) and thus $Qx=0$.
	
	(c)$\Sra$(a). Let $x\in\C^n$ such that $Ex = (J-R)Qx$. Then $0\le\<Q^\topp Ex,x\> = \<Ex,Qx\> = -\<RQx,Qx\>\le 0$ implies $Q^\topp Ex = RQx = 0$ and thus also $Q^\top JQx=0$. Hence, $Qx=0$ and therefore also $Ex=0$. It follows that $x=0$.
\end{proof}

\begin{corollary}
	\label{c:ind1}
	A regular pencil $P$ is non-degenerate dissipative Hamiltonian if and only if {\rm (i)--(iii)} hold in Theorem \rmref{t:ph_pencil_charac} and $P'$ has index at most one.
\end{corollary}
\begin{proof}
	Assume that the index of $P'$ is at most one. Then the matrix $A'$ in the proof of Theorem \ref{t:ph_pencil_charac} satisfies $\ker A' = \ker(A')^2$ and is therefore a non-degenerate dissipative Hamiltonian matrix by Corollary \ref{c:non-deg1}. Furthermore, since the matrices $-I$ and $Q_i$ in the proof of Theorem \ref{t:ph_pencil_charac} are invertible, it follows that the pencil $sN-I$ is non-degenerate. Hence, so is $P$. Conversely, assume that $P(s) = sE - (J-R)Q$ with $Q$ invertible and consider a chain of length two at $\la=0$, i.e., $(J-R)Qx_0 = 0$ and $(J-R)Qx_1=Ex_0$. Then $RQx_0 = JQx_0 = 0$ and also $E^\top Qx_0 = 0$ since $\<Qx_0,Ex_0\> = \<Qx_0,(J-R)Qx_1\> = -\<(J+R)Qx_0,Qx_1\> = 0$. Hence, $Qx_0 = 0$ by Proposition \ref{p:regular} (c) and thus $x_0 = 0$, which shows that the index of $P'$ is at most one.
\end{proof}

\begin{proposition}\label{p:ind1}
The dissipative Hamiltonian pencil $P(s) = sE - (J-R)Q$ is regular with index at most one if and only if 
\begin{align}
\label{e:reg_ind1}
\ker E\,\cap\,\ker(RQ)\,\cap\,(JQ)^{-1}\im E = \{0\}-
\end{align}
\end{proposition}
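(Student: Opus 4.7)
My proof rests on the auxiliary identity
\[
\{x \in \ker E : (J-R)Qx \in \im E\}\,=\,\ker E \,\cap\,\ker(RQ)\,\cap\,(JQ)^{-1}\im E,
\]
which reformulates \eqref{e:reg_ind1} as: no nonzero $x\in\ker E$ satisfies $(J-R)Qx\in\im E$. The inclusion $\supseteq$ is immediate; for $\subseteq$, when $Ex=0$ and $(J-R)Qx=Ev$, the skew-symmetry of $J$ together with $E^\top Q=Q^\top E$ gives
\[
0\,\le\,\langle RQx,Qx\rangle\,=\,-\langle(J-R)Qx,Qx\rangle\,=\,-\langle Ev,Qx\rangle\,=\,-\langle v,Q^\top Ex\rangle\,=\,0,
\]
so $RQx=0$ and hence $JQx=(J-R)Qx\in\im E$.

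For the forward implication, suppose $P$ is regular with index at most one and let $x$ lie in the right-hand side of \eqref{e:reg_ind1}. Pick $v$ with $Ev=(J-R)Qx$: if $(J-R)Qx\neq 0$, then $v\neq 0$ and $(x,v)$ is a chain of length two at $\infty$, contradicting index~$\le 1$; if $(J-R)Qx=0$ and $x\neq 0$, then $P(s)x\equiv 0$, contradicting regularity. Hence $x=0$.

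For the converse, the main obstacle is to derive condition~(b) of Proposition~\ref{p:regular}, i.e., to replace $(JQ)^{-1}\im E$ by the a priori larger $\ker(Q^\top JQ)$. I plan to unlock Lemma~\ref{l:QE}(iii) as follows: since any $x\in\ker E\cap\ker Q$ trivially lies in the intersection of \eqref{e:reg_ind1}, we have $\ker E\cap\ker Q=\{0\}$; Lemma~\ref{l:QE}(iii) then yields $\ker Q^\top\subseteq\im E$, which in turn gives $\ker E^\top\subseteq\im Q$ after taking orthogonal complements. Consequently, $Q^\top JQx=0$ means $JQx\perp\im Q\supseteq\ker E^\top$, so $JQx\in(\ker E^\top)^\perp=\im E$. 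Thus $\ker E\cap\ker(RQ)\cap\ker(Q^\top JQ)\subseteq\ker E\cap\ker(RQ)\cap(JQ)^{-1}\im E=\{0\}$, and Proposition~\ref{p:regular} supplies regularity. To conclude index~$\le 1$, any length-two chain $(x_0,x_1)$ at $\infty$ would satisfy $Ex_0=0$ and $Ex_1=(J-R)Qx_0$; the same pairing as in the proof of Proposition~\ref{p:regular}, namely $\langle(J-R)Qx_0,Qx_0\rangle=\langle Ex_1,Qx_0\rangle=\langle x_1,Q^\top Ex_0\rangle=0$, forces $RQx_0=0$, whence $JQx_0=Ex_1\in\im E$. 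Then $x_0$ sits in the intersection of \eqref{e:reg_ind1}, so $x_0=0$, contradicting $x_0\neq 0$. The decisive step is the passage from $\ker(Q^\top JQ)$ to $(JQ)^{-1}\im E$, which hinges on the almost-trivial observation $\ker E\cap\ker Q=\{0\}$ that activates Lemma~\ref{l:QE}(iii).
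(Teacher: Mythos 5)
Your proof is correct and follows essentially the same route as the paper: the identity $\ker E\cap[(J-R)Q]^{-1}\im E=\ker E\cap\ker(RQ)\cap(JQ)^{-1}\im E$ via the pairing $\langle(J-R)Qx,Qx\rangle$, the chain characterization of index at most one, and, for the converse, the passage $\ker E\cap\ker Q=\{0\}$ $\Rightarrow$ $\ker Q^\top\subset\im E$ (Lemma~\ref{l:QE}(iii)) $\Rightarrow$ condition (b) of Proposition~\ref{p:regular}. Your detour through orthogonal complements ($\ker E^\top\subseteq\im Q$) is just a trivial reformulation of the paper's direct inclusion $\ker(Q^\top JQ)=(JQ)^{-1}\ker Q^\top\subset(JQ)^{-1}\im E$.
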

\begin{proof}
Note that the vector $x_0$ of a chain $(x_0,x_1)$ at $\la=\infty$ of a pencil $sE-A$ belongs to $\ker E\cap A^{-1}\im E$. Conversely, if $x_0\in\ker E\cap A^{-1}\im E$, then there exists $x_1$ such that $Ex_1 = Ax_0$. Hence, the pencil's index is at most one if and only if $\ker E\cap A^{-1}\im E = \{0\}$.

First of all we shall show that (in any case)
$$
\ker E\,\cap\,[(J-R)Q]^{-1}\im E = \ker E\,\cap\,\ker(RQ)\,\cap\,(JQ)^{-1}\im E.
$$
Indeed, if $Ex=0$ and $(J-R)Qx = Ev$ for some $v\in\R^n$, then $-\<RQx,Qx\> = \<(J-R)Qx,Qx\> = \<Ev,Qx\> = \<v,E^\topp Qx\> = \<v,Q^\topp Ex\> = 0$ and thus $RQx=0$, so that $x$ is contained in the set on the right-hand side. The converse inclusion is trivial.

Hence, if the pencil is regular with index $1$, then \eqref{e:reg_ind1} holds. Conversely, assume that \eqref{e:reg_ind1} holds. Then $\ker E\cap\ker Q = \{0\}$ and due to Lemma \ref{l:QE} we have
$$
\ker(Q^\topp JQ) = (JQ)^{-1}\ker Q^\topp \subset (JQ)^{-1}\im E,
$$
and thus Proposition \ref{p:regular} implies that $P$ is regular.
\end{proof}

\section{Existence of optimal solutions}\label{a:existence}
Let $A\in\R^{n\times n}$, $B\in\R^{n\times m}$, and $W\in\R^{k\times k}$, where $k = n+m$. Moreover, let $\UU\subset\R^m$, $\Phi\subset\R^n$, $\calI = [0,T]$, $x^0\in\R^n$, and consider the optimal control problem
\begin{align}
	\begin{split}\label{e:FC_OCPlin}
		\min_{u\in L^2(\calI,\UU)}\,&M(x(T)) + \int_0^T\left\|W\smallvek{x(t)}{u(t)}\right\|^2\,dt\\
		\text{s.t.}\quad &\dot x = Ax + Bu,\\
		&x(0)=x^0,\quad x(T)\in\Phi.
	\end{split}
\end{align}

The next theorem shows that under suitable conditions on the problem data, the OCP \eqref{e:FC_OCPlin} is well-posed if only $\Phi$ is reachable from $x^0$ at time $T$.

\begin{theorem}\label{t:optsol_exists}
	Assume the following:
	\begin{itemize}
		\item $\UU$ is convex and compact
		\item $\Phi$ is closed.
		\item $M : \R^n\to\R$ is convex \braces{and hence continuous}.
	\end{itemize}
	If $x^0\in\RT_T(\Phi)$, then there exists an optimal control for the OCP \eqref{e:FC_OCPlin}.
\end{theorem}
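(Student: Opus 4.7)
The plan is to apply the direct method of the calculus of variations to a minimizing sequence. Feasibility of the OCP is guaranteed by the hypothesis $x^0\in\RT_T(\Phi)$, so the admissible set $\calA\doteq\{u\in L^2(\calI,\UU): x(T;x^0,u)\in\Phi\}$ is non-empty. Let $(u_k)\subset\calA$ be a minimizing sequence and denote $x_k\doteq x(\,\cdot\,;x^0,u_k)$.

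First I would establish weak compactness. Since $\UU$ is compact, the sequence $(u_k)$ is uniformly bounded in $L^\infty(\calI,\R^m)$, hence bounded in $L^2(\calI,\R^m)$. Passing to a subsequence (not relabeled), there exists $u^\star\in L^2(\calI,\R^m)$ with $u_k\rightharpoonup u^\star$. Since $\UU$ is convex and closed, the set $L^2(\calI,\UU)$ is convex and strongly closed in $L^2(\calI,\R^m)$, hence weakly closed by Mazur's theorem, so $u^\star\in L^2(\calI,\UU)$.

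Next I would show that the states converge and the terminal constraint is preserved. By the variation-of-constants formula,
\begin{equation*}
x_k(t) = e^{tA}x^0 + \int_0^t e^{(t-s)A}Bu_k(s)\,ds.
\end{equation*}
For each fixed $t$, the map $u\mapsto \int_0^t e^{(t-s)A}Bu(s)\,ds$ is bounded and linear on $L^2(\calI,\R^m)$, hence weakly continuous. Thus $x_k(t)\to x^\star(t)$ for every $t\in\calI$, where $x^\star\doteq x(\,\cdot\,;x^0,u^\star)$. A uniform bound on $\dot x_k$ (from uniform boundedness of $u_k$ and continuity of $A,B$) combined with Arzelà--Ascoli upgrades this to uniform convergence $x_k\to x^\star$ on $\calI$. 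In particular $x_k(T)\to x^\star(T)$, and since $\Phi$ is closed, $x^\star(T)\in\Phi$, i.e.\ $u^\star\in\calA$.

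Finally I would verify the lower semicontinuity of the cost. Continuity of $M$ (which follows from its convexity on $\R^n$) and $x_k(T)\to x^\star(T)$ yield $M(x_k(T))\to M(x^\star(T))$. For the integral term, consider the functional
\begin{equation*}
J_0(x,u)\doteq \int_0^T\left\|W\smallvek{x(t)}{u(t)}\right\|^2 dt
\end{equation*}
on $L^2(\calI,\R^{n+m})$. It is convex (quadratic with $W^\top W\ge 0$) and continuous in the norm topology, hence weakly lower semicontinuous. Since $(x_k,u_k)\rightharpoonup(x^\star,u^\star)$ in $L^2(\calI,\R^{n+m})$ (strong convergence of $x_k$ implies weak convergence), we conclude
\begin{equation*}
J_0(x^\star,u^\star)\,\le\,\liminf_{k\to\infty}J_0(x_k,u_k).
\end{equation*}
Combining both pieces gives $M(x^\star(T)) + J_0(x^\star,u^\star)\le\liminf_k[M(x_k(T))+J_0(x_k,u_k)] = \inf_{\calA}$, so $u^\star$ is optimal. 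The main (minor) technical point is ensuring strong uniform convergence of the $x_k$ so that the terminal constraint $x^\star(T)\in\Phi$ passes to the limit; this is a standard Arzelà--Ascoli argument using the uniform bound on $u_k$.
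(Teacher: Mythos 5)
Your proof is correct and follows essentially the same route as the paper: a minimizing sequence, weak $L^2$-compactness from the boundedness of $\UU$, weak closedness of the admissible set via convexity, preservation of the terminal constraint $x(T)\in\Phi$ in the limit, and weak lower semicontinuity of the convex, strongly continuous cost (the paper packages the last step via closed convex sublevel sets and a Hahn--Banach separation lemma, which is the same mechanism as your Mazur argument). The only cosmetic difference is your Arzel\`a--Ascoli step, which is not needed: since $u\mapsto x(T;x^0,u)$ is a bounded finite-rank affine map, $x_k(T)\to x^\star(T)$ already follows from $u_k\rightharpoonup u^\star$, and weak $L^2$-convergence of $x_k$ (enough for the lower semicontinuity step) follows likewise.
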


In the proof we will twice make use of the following simple consequence of the Hahn-Banach separation theorem.

\begin{lemma}\label{l:clcv}
	A closed and convex subset $C$ of a normed space $X$ is weakly sequentially closed. That is, if $(x_n)\subset C$ and $x_n\wto x\in X$ \braces{weak convergence}, then $x\in C$.
\end{lemma}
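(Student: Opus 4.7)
The plan is to argue by contradiction using the Hahn--Banach separation theorem, in its geometric form for disjoint convex sets one of which is closed and the other a singleton (hence compact).

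Suppose $(x_n)\subset C$ with $x_n\wto x$ in $X$, and assume towards a contradiction that $x\notin C$. Since $C$ is nonempty, closed, and convex and $\{x\}$ is compact and convex with $\{x\}\cap C=\emptyset$, the Hahn--Banach separation theorem yields a continuous linear functional $\ell\in X^*$ and a scalar $\alpha\in\R$ such that
$$
\ell(x)<\alpha\le\ell(y)\quad\text{for all }y\in C.
$$
In particular $\ell(x_n)\ge\alpha$ for every $n$. On the other hand, weak convergence $x_n\wto x$ means $\ell(x_n)\to\ell(x)$, so passing to the limit gives $\ell(x)\ge\alpha$, contradicting $\ell(x)<\alpha$. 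Hence $x\in C$.

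The only real step is invoking the appropriate version of Hahn--Banach (the one separating a closed convex set from a point not in it by a continuous linear functional, with strict inequality on the singleton side); no estimates or constructions beyond this are needed. Since this is a textbook fact, I would simply write the contradiction argument as above and cite a standard reference for the separation theorem rather than expand further.
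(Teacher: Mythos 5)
Your argument is correct and is precisely the standard one the paper has in mind: the authors state the lemma without proof, describing it only as ``a simple consequence of the Hahn--Banach separation theorem,'' and your separation-of-a-point-from-a-closed-convex-set argument is exactly that consequence spelled out. Nothing further is needed.
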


\begin{proof}[Proof of Theorem \ref{t:optsol_exists}]
	Denote by $\calU$ the set of admissible controls for \eqref{e:FC_OCPlin}. That is,
	\begin{align*}
		\calU
		&= \left\{u\in L^2(\calI,\UU) : e^{TA}x^0 + \int_0^Te^{(T-t)A}Bu(t)\,dt\in\Phi\right\}.
	\end{align*}
	With the linear and bounded finite rank operator $\vphi : L^2(\calI,\R^m)\to\R^n$, defined by $\vphi(u) \doteq   \int_0^Te^{(T-t)A}Bu(t)\,dt$, we can write this as
	$$
	\calU = \vphi^{-1}\left(\left\{\Phi-e^{TA}x^0\right\}\right)\cap L^2(\calI,\UU).
	$$
	The set $\calU$ is closed. Indeed, $\vphi^{-1}(\{\Phi-e^{TA}x^0\})$ is closed since $\Phi$ is closed and $\vphi$ is continuous, and if $(u_n)\subset L^2(\calI,\UU)$ converges in $L^2(\calI,\R^m)$ to some $u\in L^2(\calI,\R^m)$, then by a theorem of Riesz there exists a subsequence $(u_{n_k})$ such that $u_{n_k}(t)\to u(t)$ for a.e.\ $t\in\calI$ and thus $u\in L^2(\calI,\UU)$. Since $L^2(\calI,\UU)$ is bounded, the same holds for $\calU$.
	
	Define the (non-linear) functionals $J_1,J_2 : L^2(\calI,\R^m)\to\R$ by
	$$
	J_1(u) \doteq   M\left(e^{TA}x^0 + \vphi(u)\right),\quad
	J_2(u) \doteq   \int_0^T\left\|W\smallvek{x(t,x^0;u)}{u(t)}\right\|^2\,dt,\quad u\in L^2(\calI,\R^m).
	$$
	Due to the assumptions on $M$, the functional $J_1$ is convex and continuous. Since $x(\,\cdot\,,x^0;u)$ depends affine-linearly on $u$, we may write $J_2$ as $J_2(u) = \int_0^T\|(Cu)(s)\|^2\,ds$ with a continuous affine-linear operator $C : L^2(\calI,\R^m)\to L^2(\calI,\R^k)$. From this representation and the convexity of the function $z\mapsto\|z\|^2$ on $\R^k$ we see that $J_2$ is convex. Furthermore, $J_2$ is continuous, even Lipschitz on bounded subsets. Overall, this shows that the cost functional $J \doteq   J_1 + J_2$ is convex and continuous.
	
	Let $a \doteq   \inf\{J(u) : u\in\calU\}$. Then there exists a sequence $(u_n)\subset\calU$ such that $J(u_n)\to a$ as $n\to\infty$. As $\calU$ is bounded, we may assume that $(u_n)$ converges weakly to some $u^\star \in L^2(\calI,\R^m)$. By Lemma \ref{l:clcv} we have $u^\star \in L^2(\calI,\UU)$. Moreover, since $\vphi$ is linear and finite-rank, from $\vphi(u_n)\in\Phi - e^{TA}x^0$ we conclude that also $\vphi(u^\star ) = \lim_{n\to\infty}\vphi(u_n)\in\Phi - e^{TA}x^0$. Hence, $u^\star \in\calU$.
	
	For $\veps > 0$ set $L_\veps \doteq   \{u\in L^2(\calI,\R^m) : J(u)\le a+\veps\}$. Convexity and continuity of $J$ imply that each $L_\veps$ is convex and closed. Since $u_n\in L_\veps$ for infinitely many $n\in\N$, it is another consequence of Lemma \ref{l:clcv} that $u^\star \in L_\veps$ and thus $J(u^\star )\le a+\veps$. As $\veps > 0$ was arbitrary, this implies $J(u^\star )\le a$. On the other hand, $u^\star $ is admissible and hence $J(u^\star )\ge a$ by definition of $a$. Thus, $J(u^\star ) = a$.
\end{proof}

\end{document}